\DeclareFontFamily{OT1}{ptm}{}
\DeclareFontShape{OT1}{ptm}{m}{n} { <-> ptmr}{}
\DeclareFontShape{OT1}{ptm}{m}{it}{ <-> ptmri}{}
\DeclareFontShape{OT1}{ptm}{m}{sl}{ <->ptmro}{}
\DeclareFontShape{OT1}{ptm}{m}{sc}{ <-> ptmrc}{}
\DeclareFontShape{OT1}{ptm}{b}{n} { <-> ptmb}{}
\DeclareFontShape{OT1}{ptm}{b}{it}{ <-> ptmbi}{}     
\DeclareFontShape{OT1}{ptm}{bx}{n} {<->ssub * ptm/b/n}{}
\DeclareFontShape{OT1}{ptm}{bx}{it}{<->ssub * ptm/b/it}{}
\DeclareSymbolFont{bold}{OT1}{ptm}{b}{n}
\DeclareMathAlphabet{\mathbf}{OT1}{ptm}{b}{n}  
\DeclareMathAlphabet{\mathrm}{OT1}{ptm}{m}{n}
\DeclareFontFamily{OT1}{psy}{}      
\DeclareFontShape{OT1}{psy}{m}{n}{ <-> s * [0.9] psyr}{}
\DeclareFontFamily{OMS}{ptm}{}     
\DeclareFontShape{OMS}{ptm}{m}{n}{ <8> <9> <10> gen * cmsy }{}
\DeclareFontFamily{OMS}{cmtt}{}     
\DeclareFontShape{OMS}{cmtt}{m}{n}{ <8> <9> <10> gen * cmsy }{}
\DeclareSymbolFont{emsy}{OT1}{ptm}{m}{it}
\DeclareSymbolFont{emsr}{OT1}{ptm}{m}{n}
\DeclareSymbolFont{emcmr}{OT1}{cmr}{m}{n}   
\DeclareSymbolFont{emsymb}{OT1}{psy}{m}{n}  
\DeclareMathSymbol a{\mathalpha}{emsy}{"61}
\DeclareMathSymbol b{\mathalpha}{emsy}{"62}
\DeclareMathSymbol c{\mathalpha}{emsy}{"63}
\DeclareMathSymbol d{\mathalpha}{emsy}{"64}
\DeclareMathSymbol e{\mathalpha}{emsy}{"65}
\DeclareMathSymbol f{\mathalpha}{emsy}{"66}
\DeclareMathSymbol g{\mathalpha}{emsy}{"67}
\DeclareMathSymbol h{\mathalpha}{emsy}{"68}
\DeclareMathSymbol i{\mathalpha}{emsy}{"69}
\DeclareMathSymbol j{\mathalpha}{emsy}{"6A}
\DeclareMathSymbol k{\mathalpha}{emsy}{"6B}
\DeclareMathSymbol l{\mathalpha}{emsy}{"6C}
\DeclareMathSymbol m{\mathalpha}{emsy}{"6D}
\DeclareMathSymbol n{\mathalpha}{emsy}{"6E}
\DeclareMathSymbol o{\mathalpha}{emsy}{"6F}
\DeclareMathSymbol p{\mathalpha}{emsy}{"70}
\DeclareMathSymbol q{\mathalpha}{emsy}{"71}
\DeclareMathSymbol r{\mathalpha}{emsy}{"72}
\DeclareMathSymbol s{\mathalpha}{emsy}{"73}
\DeclareMathSymbol t{\mathalpha}{emsy}{"74}
\DeclareMathSymbol u{\mathalpha}{emsy}{"75}
\DeclareMathSymbol v{\mathalpha}{emsy}{"76}
\DeclareMathSymbol w{\mathalpha}{emsy}{"77}
\DeclareMathSymbol x{\mathalpha}{emsy}{"78}
\DeclareMathSymbol y{\mathalpha}{emsy}{"79}
\DeclareMathSymbol z{\mathalpha}{emsy}{"7A}
\DeclareMathSymbol A{\mathalpha}{emsy}{"41}
\DeclareMathSymbol B{\mathalpha}{emsy}{"42}
\DeclareMathSymbol C{\mathalpha}{emsy}{"43}
\DeclareMathSymbol D{\mathalpha}{emsy}{"44}
\DeclareMathSymbol E{\mathalpha}{emsy}{"45}
\DeclareMathSymbol F{\mathalpha}{emsy}{"46}
\DeclareMathSymbol G{\mathalpha}{emsy}{"47}
\DeclareMathSymbol H{\mathalpha}{emsy}{"48}
\DeclareMathSymbol I{\mathalpha}{emsy}{"49}
\DeclareMathSymbol J{\mathalpha}{emsy}{"4A}
\DeclareMathSymbol K{\mathalpha}{emsy}{"4B}
\DeclareMathSymbol L{\mathalpha}{emsy}{"4C}
\DeclareMathSymbol M{\mathalpha}{emsy}{"4D}
\DeclareMathSymbol N{\mathalpha}{emsy}{"4E}
\DeclareMathSymbol O{\mathalpha}{emsy}{"4F}
\DeclareMathSymbol P{\mathalpha}{emsy}{"50}
\DeclareMathSymbol Q{\mathalpha}{emsy}{"51}
\DeclareMathSymbol R{\mathalpha}{emsy}{"52}
\DeclareMathSymbol S{\mathalpha}{emsy}{"53}
\DeclareMathSymbol T{\mathalpha}{emsy}{"54}
\DeclareMathSymbol U{\mathalpha}{emsy}{"55}
\DeclareMathSymbol V{\mathalpha}{emsy}{"56}
\DeclareMathSymbol W{\mathalpha}{emsy}{"57}
\DeclareMathSymbol X{\mathalpha}{emsy}{"58}
\DeclareMathSymbol Y{\mathalpha}{emsy}{"59}
\DeclareMathSymbol Z{\mathalpha}{emsy}{"5A}
\DeclareMathSymbol{\bullet}{\mathalpha}{emsymb}{"B7}
\DeclareMathSymbol{\regis}{\mathalpha}{emsymb}{"D2}
\def\Bullet{\leavevmode\unkern{$\m@th\bullet$}\kern.32em\ignorespaces}
\def\Regis{\leavevmode\raise.5ex\hbox{$\m@th\regis$}}
\DeclareMathSymbol +{\mathbin}{emcmr}{`+}
\DeclareMathSymbol ={\mathrel}{emcmr}{`=}  
\DeclareMathSymbol{\Gamma}{\mathalpha}{emcmr}{"00}
\DeclareMathSymbol{\Delta}{\mathalpha}{emcmr}{"01}
\DeclareMathSymbol{\Theta}{\mathalpha}{emcmr}{"02}
\DeclareMathSymbol{\Lambda}{\mathalpha}{emcmr}{"03}
\DeclareMathSymbol{\Xi}{\mathalpha}{emcmr}{"04}
\DeclareMathSymbol{\Pi}{\mathalpha}{emcmr}{"05}
\DeclareMathSymbol{\Sigma}{\mathalpha}{emcmr}{"06}
\DeclareMathSymbol{\Upsilon}{\mathalpha}{emcmr}{"07}
\DeclareMathSymbol{\Phi}{\mathalpha}{emcmr}{"08}
\DeclareMathSymbol{\Psi}{\mathalpha}{emcmr}{"09}
\DeclareMathSymbol{\Omega}{\mathalpha}{emcmr}{"0A}
\newcommand{\bigfigure}[1]{{#1}}
\newcommand{\secret}[1]{}
\newcommand{\verlab}[1]{}
\newcommand{\arrlab}[1]{}
\tikzset{
dot/.style = {circle, fill, minimum size=#1,
              inner sep=0pt, outer sep=0pt},
dot/.default = 4pt 
}
\tikzset{
bigdot/.style = {circle, draw=black, minimum size=#1,
                 inner sep=0pt}, 
bigdot/.default = 6pt 
}
\def\triv{{\rm(\raisebox{-0.5mm}{\includegraphics[width=3mm]{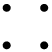}}\rm)}}
\def\west{{\rm(\raisebox{-0.5mm}{\includegraphics[width=3mm]{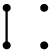}}\rm)}}
\def\south{{\rm(\raisebox{-0.5mm}{\includegraphics[width=3mm]{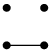}}\rm)}}
\def\east{{\rm(\raisebox{-0.5mm}{\includegraphics[width=3mm]{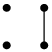}}\rm)}}
\def\north{{\rm(\raisebox{-0.5mm}{\includegraphics[width=3mm]{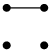}}\rm)}}
\def\sw{{\rm(\raisebox{-0.5mm}{\includegraphics[width=3mm]{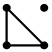}}\rm)}}
\def\nw{{\rm(\raisebox{-0.5mm}{\includegraphics[width=3mm]{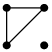}}\rm)}}
\def\se{{\rm(\raisebox{-0.5mm}{\includegraphics[width=3mm]{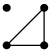}}\rm)}}
\def\ne{{\rm(\raisebox{-0.5mm}{\includegraphics[width=3mm]{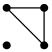}}\rm)}}
\def\ns{{\rm(\raisebox{-0.5mm}{\includegraphics[width=3mm]{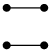}}\rm)}}
\def\ew{{\rm(\raisebox{-0.5mm}{\includegraphics[width=3mm]{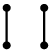}}\rm)}}
\def\mdiag{{\rm(\raisebox{-0.5mm}{\includegraphics[width=3mm]{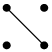}}\rm)}}
\def\adiag{{\rm(\raisebox{-0.5mm}{\includegraphics[width=3mm]{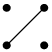}}\rm)}}
\def\square{{\rm(\raisebox{-0.5mm}{\includegraphics[width=3mm]{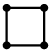}}\rm)}}
\newtheorem{theorem}{Theorem}[section]  
\newtheorem{lemma}[theorem]{Lemma}
\newtheorem{proposition}[theorem]{Proposition}
\newtheorem{conjecture}[theorem]{Conjecture}
\newtheorem{mainconjecture}[theorem]{Main Conjecture}
\theoremstyle{definition}
\newtheorem{definition}[theorem]{Definition}
\newtheorem{remark}[theorem]{Remark}
\newtheorem{example}[theorem]{Example}
\newtheorem{observation}[theorem]{Observation}
\newtheorem{notation}[theorem]{Notation}
\def\black#1{\textcolor{black}{#1}}
\def\Z{{\mathbb{Z}}}
\def\calB{\mathcal{B}}
\def\calR{\mathcal{R}}
\def\co{{\colon\thinspace}}
\def\<{\langle}
\def\>{\rangle}
\def\-{\setminus}
\renewcommand{\phi}{{\varphi}}
\newcommand{\intv}[1]{[\![#1]\!]} 
\begin{document}
\title[Conjugacy invariants in Garside groups]{Conjugacy invariants and rigidity in Garside groups:\\ a uniformity phenomenon}
\author{Matthieu Calvez} 
\address{Matthieu Calvez, IRISIB, Rue Royale 150, 1000 Bruxelles, Belgique}
\email{calvez.matthieu@gmail.com}
\author{Owen Garnier}
\address{Owen Garnier, Instituto de Matem\'aticas de la Universidad de Sevilla (IMUS) and Departamento de \'Algebra, Universidad de Sevilla. Av. Reina Mercedes s/n. 41012, Sevilla.  Spain.}
\email{owen.garnier@math.cnrs.fr}
\author{Juan González-Meneses} 
\address{Juan Gonz\'alez-Meneses, Instituto de Matem\'aticas de la Universidad de Sevilla (IMUS) and Departamento de \'Algebra, Universidad de Sevilla. Av. Reina Mercedes s/n. 41012, Sevilla,  Spain.}
\email{meneses@us.es}
\author{Bert Wiest}
\address{Bert Wiest, Univ Rennes, CNRS, IRMAR - UMR 6625, F-35000 Rennes, France}
\email{bertold.wiest@univ-rennes.fr}
\begin{abstract}
Consider an element~$x$ of a Garside group which is rigid in the sense of Garside-theory. Let $SC(x)$ be the set of rigid conjugates of~$x$ -- this is a well-known characteristic subset of the conjugacy class of~$x$. 
We present computational evidence that the sequence $\left( |SC(x^n)| \right)_{n\in\mathbb N}$ is not only bounded, but in fact periodic, and that there is a bound on the length of the period which depends only on the underlying group and its Garside structure.
We prove this result in the special case of the circular Garside groups, including the 2-generator Artin groups with their classical and dual structures (where we prove that the sequence is always constant), and in the case of the dual $4$-strand braid group.
\end{abstract}
\maketitle

%
\section{Introduction}\label{S:Intro}

Let $G$ be a Garside group, equipped with a Garside structure -- 
the most well-known examples are the $m$-strand braid group equipped with Garside's original Garside structure, denoted~$\calB_m$, and the same group equipped with Birman-Ko-Lee's dual Garside structure, denoted~$\calB^*_m$. Other examples relevant for the present paper are the Artin-Tits groups of spherical type, equipped with their classical Garside structures and the so-called \emph{circular Garside groups}. 
Relevant references for the braid groups with the classical structure are \cite{Garside, ElRifaiMorton} and \cite{BKL} for the dual structure, \cite{BrieskornSaito,Deligne} for the generalisation to Artin-Tits groups of spherical type. The circular groups are studied in~\cite{GarnierRoots}. For general Garside theory, see \cite{DehornoyParis} (for the first definition), as well as \cite{DehornoyGarside,Birman-Gebhardt-GM1}, and \cite{McCammondIntroGarside}, and finally \cite{DDGKM} for a very complete and high-level modern point of view.

We recall that to any element $x$ of~$G$, we can associate a finite subset of the conjugacy class of~$x$ called the sliding circuits set, denoted $SC(x)$ \cite{GebhardtGMCyclicSliding}. This subset depends only on the conjugacy class of~$x$; therefore, being able to calculate the sliding circuits set of any given element solves the conjugacy problem in~$G$. 

Every element $x$ in a Garside group admits a {\em Garside normal form}, which is a decomposition of the form $x=\Delta^p x_1\cdots x_r$, where $\Delta$ is the Garside element and $x_1,\ldots,x_r$ belong to a set of generators called {\em simple elements}, which are positive prefixes of $\Delta$. The sequence $x_1,\ldots,x_r$ (which must satisfy some algebraic conditions) and the integer $p$ (called the {\em infimum} of $x$ and denoted $\inf(x)$) are determined by $x$, so the Garside normal form of $x$ is unique. 

It is a property of Garside groups that conjugation by $\Delta$ (we will denote $\tau(s)=\Delta^{-1}s\Delta$) preserves the set of simple elements. Now, an element $x$ with normal form $\Delta^p x_1\cdots x_r$ is said to be {\em rigid} if the Garside normal form of $x^2$ is just $\Delta^{2p} \tau^{p}(x_1)\cdots \tau^p(x_r) x_1 \cdots x_r$. That is, if the normal form of $x^2$ is obtained by concatenating two normal forms of $x$, and then sliding the middle $\Delta^p$ to the left, conjugating the factors on its left. 

Notice that, if $x$ is rigid, every positive power of $x$ is also rigid: the left normal form of $x^n$ is obtained by concatenating $n$ normal forms of $x$ and sliding all $\Delta$ factors to the left. Moreover, since the Garside normal form of $x^{-1}$ is closely related to that of $x$, it follows that $x$ is rigid if and only if $x^{-1}$ is rigid, so the rigidity of $x$ implies the rigidity of every (non-necessarily positive) power of $x$.

If $x$ is rigid, then its sliding circuits set $SC(x)$ consists of the set of rigid conjugates of $x$\cite{GebhardtGMCyclicSliding}. Hence, $SC(x^n)$ consists of the set of rigid conjugates of $x^n$, for every integer $n$. It follows that set $SC(x^{-n})$ consists of the inverses of the elements of $SC(x^n)$. We will be interested in the sizes of these sets, as $n>0$ grows.

\begin{conjecture}\label{C:BasicVersion}
For any rigid element $x$, the sequence $\left( |SC(x^n)| \right)_{n\in\mathbb N}$ is bounded.
\end{conjecture}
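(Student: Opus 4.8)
The plan is to reduce the statement to a finiteness property of the conjugacy class of~$x$, exploiting that taking $n$-th powers surjects onto $SC(x^n)$. Indeed, if $y\in SC(x^n)$ then $y$ is rigid and conjugate to $x^n$, so $y=g\,x^n\,g\inv=(g\,x\,g\inv)^n$ for some $g\in G$; setting $z=g\,x\,g\inv$, the element $z$ is conjugate to~$x$, $z^n=y$ is rigid, and $y$ is the image of~$z$ under $z\mapsto z^n$. Hence the power map
\[
\Theta_n\colon \{\, z \text{ conjugate to } x : z^n \text{ is rigid} \,\} \longrightarrow SC(x^n), \qquad z\mapsto z^n,
\]
is surjective, whence $|SC(x^n)|\le |\{\, z \text{ conjugate to } x : z^n \text{ is rigid} \,\}|$. (In the braid groups $n$-th roots are moreover unique, so $\Theta_n$ is even a bijection; this refinement is what one needs for the periodicity statement, but for boundedness surjectivity alone suffices.) It therefore suffices to bound, uniformly in~$n$, the number of conjugates of~$x$ whose $n$-th power is rigid.

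These conjugates fall into two families. The rigid conjugates of~$x$ are exactly the elements of $SC(x)$, and each of them has rigid powers; they contribute the fixed finite set $SC(x)$. It remains to control the \emph{non-rigid} conjugates $z$ of~$x$ for which $z^n$ nevertheless turns out to be rigid. Here a preliminary constraint comes from stable invariants: if $z$ is conjugate to~$x$ and $z^n$ is rigid, then $z^n$ is super summit, so $\inf(z^n)=n\inf(x)$ and $\sup(z^n)=n\sup(x)$; passing to the limit this forces $\inf_s(z)=\inf(x)$ and $\sup_s(z)=\sup(x)$, and shows that $z^n$ attains the summit infimum and supremum of the conjugacy class of~$x^n$.

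The heart of the argument is the following Key Lemma, which I would aim to prove: \emph{if $z$ is conjugate to the rigid element~$x$ and $z^n$ is rigid for some $n\ge 1$, then $z$ already lies in the super summit set $SSS(x)$}, i.e.\ $\inf(z)=\inf(x)$ and $\sup(z)=\sup(x)$. Granting this, the domain of $\Theta_n$ is contained in the finite set $SSS(x)$, and hence $|SC(x^n)|\le|SSS(x)|$ for every~$n$, proving the conjecture. To establish the Key Lemma I would study the left normal form of $z^n$ through its almost-periodic block structure: the only possible failures of $z^n$ to be the clean $n$-fold concatenation of copies of~$z$ occur at the junctions between consecutive copies, and rigidity of $z^n$ prohibits any net creation of a $\Delta$ factor, or any loss of canonical length, at these junctions. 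The aim is to show that if $z\notin SSS(x)$ — so that $\inf(z)<\inf(x)$ or $\sup(z)>\sup(x)$ — then such a defect must already be visible, and unremovable, at every junction of $z^n$, contradicting its rigidity.

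The main obstacle is precisely this Key Lemma, and within it the need to rule out junction cancellation uniformly in~$n$. A priori a conjugate $z$ with canonical length $\ell(z)>\ell(x)$ could satisfy $\inf(z^n)>n\inf(z)$ or $\sup(z^n)<n\sup(z)$ because $\Delta$ factors are absorbed, or canonical length is destroyed, at the $n-1$ internal junctions of $z^n$; for carefully chosen conjugates this gain might exactly compensate the defect of~$z$ and, conceivably, even make $z^n$ rigid while $z$ itself is not super summit. Controlling this interaction is where the reducible and periodic behaviour of braids enters, and is presumably the reason the result is so far accessible only for~$\calB_3$ (where one moreover shows there are no non-rigid contributors, so the sequence is constant) and for the dual group~$\calB^*_4$. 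The sharper conjecture, periodicity of $n\mapsto|SC(x^n)|$, would then reduce to analysing, for each fixed $z\in SSS(x)$, the set of exponents~$n$ for which $z^n$ is rigid, which I would expect to be eventually periodic.
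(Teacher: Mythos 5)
This statement is a \emph{conjecture} in the paper: it is open in general, and the paper proves it only for $\calB_3$ and $\calB_4^*$ (Theorem~\ref{T:main}), by a route quite different from yours (analysis of conjugacy graphs, the weight-3 structure of the Garside element, and Lemma~\ref{L:LimitOnPower}). So there is no general proof to compare against; what matters is whether your proposed reduction could work.

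Your first step is fine: the power map $\Theta_n$ is indeed surjective onto $SC(x^n)$ (and injective, by uniqueness of roots in braid groups and, more generally, by the argument in Lemma~\ref{L:PowersAndConjGraphs}(1)), so it suffices to bound the number of conjugates of~$x$ with rigid $n$-th power. But your Key Lemma --- that any conjugate $z$ of~$x$ with a rigid power must lie in $SSS(x)$ --- is false, and it is refuted by the paper's own examples. In Example~\ref{E:SimpleExInB4classical}, $x=21|12|2132\in\calB_4$ is rigid with $\inf(x)=0$, and $y=\sigma_1^{-1}x\sigma_1$ satisfies $\inf(y)=-1$, so $y\notin SSS(x)$, yet $y^2$ is rigid (which is exactly why $|SC(x^2)|=18>6=|SC(x)|$). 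The same happens in Example~\ref{E:TheBigOne} with $\inf(y)=\inf(x)-1$ and $\sup(y)=\sup(x)+1$, where the twelfth power becomes rigid. The true statement in this direction is Proposition~\ref{P:PowerOfSSSNotRigid}, which says the \emph{opposite} of what you need: a non-rigid conjugate lying \emph{in} $SSS(x)$ never has a rigid power. Consequently, every non-rigid conjugate contributing to some $SC(x^n)$ with $n\geqslant 2$ lies \emph{outside} $SSS(x)$, and the finite set $SSS(x)$ cannot bound the domain of $\Theta_n$. The genuine difficulty --- which your plan does not address and which is why the conjecture remains open --- is precisely to control these conjugates of deficient infimum/supremum whose junction cancellations repair themselves only at some specific power; the paper's partial results do this not by locating such $z$ inside a fixed finite set, but by bounding the power $r(z)$ at which rigidity first appears (Lemma~\ref{L:LimitOnPower}) and then invoking the periodicity mechanism of Theorem~\ref{T:LatticeOfExponents}.
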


There are at least two motivations for studying this conjecture.
\begin{enumerate}
\item Currently the biggest roadblock to solving the conjugacy problem in braid groups in polynomial time using Garside theory is the following open question, concerning pseudo-Anosov (pA) braids: how quickly can 
$$\max_{x\in \calB_m, |x|\leqslant \ell,x\mathrm{pA}}|SC(x)|$$
grow, as a function of $\ell$ (for any fixed value of~$m$)? In particular, it is open whether or not this function is polynomially bounded. It would be valuable to at least understand how the size of the sliding circuit set of powers of a single element behaves: we expect it to remain bounded.
\item Let us look at the Cayley graph of~$G/\Delta^e$ (where $\Delta^e$ is a central power of the so-called Garside element~$\Delta$), with respect to the generating set given by the Garside structure. 
Let $x$ be a rigid element of~$G$; for the purposes of this discussion, we add the simplifying hypothesis that $\inf(x)=0$ (see~\cite{CalvezWiestMorse} for a discussion of the general case).
In this Cayley graph, the rigid element~$x$ gives rise to a bi-infinite geodesic axis traversing the identity vertex.
Any rigid conjugate of~$x^n,$ for any~$n$, yields another geodesic axis at finite Hausdorff-distance from the axis of~$x$. 

Thus if the sequence $(|SC(x^n)|)_{n\in\mathbb N}$ is not bounded, then there are infinitely many distinct parallel axes. In particular, if the axis of~$x$ has the Morse property, then the sequence $(|SC(x^k)|)_{k\in\mathbb N}$ must be bounded. 

In the special case where $G$ is a braid group, it is known that pseudo-Anosov elements have the Morse property. Thus we know already that for any rigid pseudo-Anosov element~$x$ in a braid group, Conjecture~\ref{C:BasicVersion} holds: the sequence $(|SC(x^n)|)_{n\in\mathbb N}$ is bounded. Nevertheless, if we want to understand the geometry of the ``bundle'' of axes parallel to the axis of~$x$, we need study the sequence in more detail. 
\end{enumerate}

While performing extensive computer experiments in order to test Conjecture~\ref{C:BasicVersion}, the authors were surprised to discover that a much stronger result appears to hold. Indeed, there seems to be a uniformity phenomenon: for any fixed number of strands, the power for which the sliding circuit set takes its maximal size appears to be bounded:

\begin{mainconjecture}\label{C:MainConjecture}
For any Garside group~$G$, equipped with a Garside structure, there exists a finite set of integers~$\mathcal P_G$ with the following property: for any rigid element $x$ of~$G$, the sequence $(|SC(x^n)|)_{n\in\mathbb N}$ is periodic, and the period is an element of $\mathcal P_G$. 
\end{mainconjecture}

This conjecture is the fruit of large computer experiments with various Artin-Tits groups of spherical type. Large numbers of random elements were generated (by writing random words in Artin generators), and for those that happened to have a rigid conjugate, the sizes of the sliding circuit sets of the braids and their first few powers were calculated. 

\begin{remark} Concerning the computer programs used for our calculations:
for the braid groups equipped with the classical Garside structure, the calculations were based on the C++ program Cbraid~\cite{GMcbraid}. 
By contrast, GAP3 with the development version of the Chevie package~\cite{GAP3,Chevie,MichelChevie}) was used for braid groups with the dual Garside structure, as well as for other Artin-Tits groups (with the classical Garside structure). In the latter cases, we have much less experimental data, since calculations on GAP3/Chevie were substantially slower than on the C++ program \cite{GMcbraid}. The port to Julia of Chevie \cite{MichelJuliaVersion} is substantially faster than the GAP3 version, but it was not used. Also, after the calculations were completed, Matteo Wei created a profoundly rewritten version \cite{WeiGarCide} of the C++ program, which can deal extremely efficiently with very general Garside groups.
\end{remark}

As a result of our calculations, we have the following table with the conjectured sets of possible periods. Here $\calB_m$ denotes the $m$-strand braid group (which is the same as the Artin-Tits group of type~$A_{m-1}$), equipped with the classical Garside structure; $\calB_m^*$ denotes the same group, equipped with the dual Garside structure. We did not perform calculations with the dual structure on Artin-Tits groups other than the braid groups.
\medskip

\phantom{.}\begin{tabular}{r||c|c|c|c|c|c|}
Group $G$ & $\calB_3$ & $\calB_4$ & $\calB_5$ & $\calB_6$ & $\calB_7$ & $\calB_8$ \\
\hline
\phantom{$\frac{1^1}{1}$}$\mathcal P_G$ & $ \{1\}$ & $\{1,2\}$ & $\{1,2,3\}$ & $\{1,2,3,6\}$ & $\{1,2,3,4,6\}$ & $\{1,2,3,4,6,12\}$\\
\end{tabular}
\smallskip

\phantom{.}\begin{tabular}{r||c|c|c|c}
 Group $G$ & $\calB_3^*$ & $\calB_4^*$ & $\calB_5^*$ & $\calB_6^*$\\
 \hline
 \phantom{$\frac{1^1}{1}$}$\mathcal P_G$ & $\{1\}$ & $\{1,2,3\}$ & $\{1,2,3,4,6\}$
 & $\{1,2,3,4,5,6
 \}$\end{tabular}

\phantom{.}\begin{tabular}{r||c|c|c|c|c|c|c|c|c|}
 Group $G$ & $B_2$ & $B_3, B_4$ & $B_5$ & $D_4$ & $D_5$ & $F_4$ & $H_3$ & $H_4$ & $I_2(10)$ \\
\hline
 \phantom{$\frac{1^1}{1}$}$\mathcal P_G$ & $\{1\}$ & $ \{1,2\}$ & $\{1,2,3\}$ & $\{1,2,3\}$ & $\{1,2,3,6\}$ & $\{1,2,3\}$ & $\{1,2\}$ & $\{1,2,3\}$ & $\{1\}$\\
\end{tabular}
\medskip

\begin{notation}
In the braid group $\calB_m$, we will use the notation $i$ instead of the more standard~$\sigma_i$ for the $i$th Artin half-twist generator. Also, when we write $21|12$, the symbol $|$ indicates that the product of two Garside generators $\sigma_2 \sigma_1 \cdot \sigma_1 \sigma_2$ is in Garside normal form as written. 
\end{notation}

In this paper, we will represent the sets of sliding circuits $SC(x)$, and the simple elements that connect the elements of $SC(x)$ by conjugations, using some graphs that we will call \emph{conjugacy graphs} (see Definition~\ref{D:ConjugacyGraph}). 

The vertices of the conjugacy graph of $x$ correspond to orbits (in $SC(x)$) under the action of to kinds of conjugations called \emph{cycling} and $\tau$. Each vertex will be labeled with a representative of the orbit.

The oriented edges correspond to conjugations by two possible kinds of simple elements: a black arrow (resp. gray arrow), going from a vertex labeled~$y$ to a different vertex labeled~$z$, corresponds to a simple element which is a prefix of the initial factor of~$y$ (resp. of the complement of the final factor of~$y$), conjugating $y$ to an element in the orbit of $z$. See Section~\ref{S:Prelim} for details. If there are exactly $k$ black (resp. gray) arrows conjugating $y$ to an element in the orbit of~$z$, we will draw a single arrow labeled ``$\times k$''.

\begin{example}\label{E:SimpleExInB4classical}
In $\calB_4$, for $x=21|12|2132$, we have $|SC(x^k)|=6$ if $k$ is odd, and $|SC(x^k)|=18$ if $k$ is even.
\begin{figure}[htb]
\bigfigure{ \begin{center}\begin{tikzpicture}
\node[bigdot,label={below:$x=21|12|2132$}] (x) at (-7,0) {};
\node[bigdot,label={left:$x^2=21|12|2132|21|12|2132$}] (x2) at (0,0) {};
\node[bigdot,label={right:$21|12|2132|2132|23|32$}] (other) at (3,0) {};
\draw[-Stealth,thick,gray] (x2)  to[bend left=15] node {\small{$\times 2$}} (other);
\draw[-Stealth,thick,gray] (other)  to[bend left=15] node {\small{$\times 1$}} (x2);
\end{tikzpicture}\end{center} }
\caption{For $x=21|12|2132$, the figure shows on the left the conjugacy graph of $x$ (only one vertex), and on the right the conjugacy graph of $x^2$.}
\label{F:ConjGraphB4easy} \end{figure}
Figure~\ref{F:ConjGraphB4easy} shows the conjugacy graphs of~$x$ and of~$x^2$. 

On the left, we see that $SC(x)$ consists only of $x$, of the two elements obtained from~$x$ by cyclically permuting the three factors, and of their images under~$\tau$, so it contains only one vertex.

The set $SC(x^2)$, by contrast, is larger: we can conjugate $x^2$ in two different ways (by~$1$ or by~$3$), and we obtain two elements of $SC(x^2)$ which are cyclic conjugates of each other (so they both represent the right-hand vertex). 
If we want to conjugate back, from one of the elements thus obtained to a rigid braid representing to the left vertex, then we have only one choice: the only minimal simple conjugator that conjugates $1^{-1}\cdot(x^2)\cdot 1=21|12|2132|2132|23|32$ to a cyclic conjugate of $x^2$ is the braid~$3$. The arrows are drawn in gray color, because they represent conjugations by a prefix of the complement of the last factor -- in the notation of \cite{Birman-Gebhardt-GM2}, they represent "gray arrows". (By contrast, ``black arrows'' are conjugations by prefixes of the first factor.)

The next figure shows a part of the Cayley graph, and specifically the conjugation of~$x^2$ by~$1$ yielding $21|12|2132|2132|23|32$. The blue arcs indicate sequences of generators that are in normal form. It is also intuitive from the red arrows in this figure why $y=1^{-1}\cdot x\cdot 1$ is not rigid whereas $1^{-1}\cdot x^2\cdot 1$ is. In Section~\ref{S:Prelim} we will give a detailed explanation of how the calculation of a conjugate along a gray or black arrow works.
\begin{figure}[htb]
\bigfigure{ \begin{center}\begin{tikzpicture}
\node[dot] (u1) at (0,1.5) {};
\node[dot] (u2) at (1.5,1.5) {};
\node[dot] (u3) at (3,1.5) {};
\node[dot] (u4) at (4.5,1.5) {};
\node[dot] (u5) at (6,1.5) {};
\node[dot] (u6) at (7.5,1.5) {};
\node[dot] (u7) at (9,1.5) {};
\node[dot] (u8) at (10.5,1.5) {};
\node[dot] (d2) at (1.5,0) {};
\node[dot] (d3) at (3,0) {};
\node[dot] (d4) at (4.5,0) {};
\node[dot] (d5) at (6,0) {};
\node[dot] (d6) at (7.5,0) {};
\node[dot] (d7) at (9,0) {};
\node[dot] (d8) at (10.5,0) {};
\draw[-Stealth] (u1) to node[above] {$2132$} (u2); 
\draw[-Stealth] (u2) to node[above] {$21$} (u3);
\draw[-Stealth] (u3) to node[above] {$12$} (u4);
\draw[-Stealth] (u4) to node[above] {$2132$} (u5);
\draw[-Stealth] (u5) to node[above] {$21$} (u6);
\draw[-Stealth] (u6) to node[above] {$12$} (u7);
\draw[-Stealth] (u7) to node[above] {$2132$} (u8);
\draw[-Stealth,red] (u2) to node {$1$\phantom{w}} (d2); 
\draw[-Stealth] (u3) to node {$2$\phantom{w}} (d3);
\draw[-Stealth] (u4) to node {$1$\phantom{w}} (d4);
\draw[-Stealth,red] (u5) to node {$3$\phantom{w}} (d5);
\draw[-Stealth] (u6) to node {$321$\phantom{w}} (d6);
\draw[-Stealth] (u7) to node {$321$\phantom{w}} (d7);
\draw[-Stealth,red] (u8) to node {$1$\phantom{w}} (d8);
\draw[-Stealth] (u2) to node[near start] {\tiny $121$} (d3);
\draw[-Stealth] (u3) to node[near start] {\tiny $121$} (d4);
\draw[-Stealth] (u4) to node[near start] {\tiny $12132$} (d5);
\draw[-Stealth] (u5) to node[near start] {\tiny $21321$} (d6);
\draw[-Stealth] (u6) to node[near start] {\tiny $12321$} (d7);
\draw[-Stealth] (u7) to node[near start] {\tiny $21321$} (d8);
\draw[-Stealth] (d2) to node[below] {$21$} (d3);
\draw[-Stealth] (d3) to node[below] {$12$} (d4);
\draw[-Stealth] (d4) to node[below] {$2132$} (d5);
\draw[-Stealth] (d5) to node[below] {$2132$} (d6);
\draw[-Stealth] (d6) to node[below] {$23$} (d7);
\draw[-Stealth] (d7) to node[below] {$32$} (d8);
\node[circle,label={right:$\ldots$}] (dots1) at (0,0.75) {};
\node[circle,label={right:$\ldots$}] (dots2) at (10.6,0.75) {};
\draw[color=blue,thick] (1.8,1.5) arc [start angle=0, end angle=180, radius=0.3];
\draw[color=blue,thick] (3.3,1.5) arc [start angle=0, end angle=180, radius=0.3];
\draw[color=blue,thick] (4.8,1.5) arc [start angle=0, end angle=180, radius=0.3];
\draw[color=blue,thick] (6.3,1.5) arc [start angle=0, end angle=180, radius=0.3];
\draw[color=blue,thick] (7.8,1.5) arc [start angle=0, end angle=180, radius=0.3];
\draw[color=blue,thick] (9.3,1.5) arc [start angle=0, end angle=180, radius=0.3];
\draw[color=blue,thick] (3.3,0) arc [start angle=0, end angle=135, radius=0.3];
\draw[color=blue,thick] (4.8,0) arc [start angle=0, end angle=135, radius=0.3];
\draw[color=blue,thick] (6.3,0) arc [start angle=0, end angle=135, radius=0.3];
\draw[color=blue,thick] (7.8,0) arc [start angle=0, end angle=135, radius=0.3];
\draw[color=blue,thick] (9.3,0) arc [start angle=0, end angle=135, radius=0.3];
\draw[densely dotted,color=blue,thick] (2.7,0) arc [start angle=180, end angle=360, radius=0.3];
\draw[densely dotted,color=blue,thick] (4.2,0) arc [start angle=180, end angle=360, radius=0.3];
\draw[densely dotted,color=blue,thick] (5.7,0) arc [start angle=180, end angle=360, radius=0.3];
\draw[densely dotted,color=blue,thick] (7.2,0) arc [start angle=180, end angle=360, radius=0.3];
\draw[densely dotted,color=blue,thick] (8.7,0) arc [start angle=180, end angle=360, radius=0.3];
\draw[-Stealth,dashed] (1.55,2.1) to node[above] {$x$} (5.95,2.1);
\draw[-Stealth,dashed] (6.05,2.1) to node[above] {$x$} (10.45,2.1);
\draw[-] (1.5,2.05) to (1.5,2.15);
\draw[-] (6,2.05) to (6,2.15);
\draw[-] (10.5,2.05) to (10.5,2.15);
\end{tikzpicture}\end{center} }
\caption{The conjugation of $x^2$ by~$1$ for $x=21|12|2132$, seen in the Cayley graph. This diagram can be periodically extended into a bi-infinite strip. This type of diagram is called a domino diagram.}
\label{F:ConjugationInB4easy} \end{figure}

Let us reinterpret our conjectures in light of this example. We have seen that a rigid braid~$x$ may very well have a conjugate~$y$ which is not rigid, but whose square (living in $SC(x^2)$) is rigid. At first sight, such an element~$y^2$ doesn't look like a square, as its normal form does not consist of some shorter word that is repeated twice. 
Now it may happen (though it doesn't in this example) that some further, even higher power of~$y$ can be conjugated again, yielding yet more rigid conjugates; and there may even be a whole tower of such powers. However, Conjecture~\ref{C:BasicVersion} states that such a tower is necessarily finite, and Conjecture~\ref{C:MainConjecture} claims that there is a bound on the highest power involved, and this bound is uniform in the group.
\end{example}

\begin{notation}\label{N:SmallestRigidPower}
For any element $y$ of~$G$ which possesses a rigid power, we denote $r(y)$ the smallest positive integer such that $y^{r(y)}$ is rigid.
\end{notation}

\begin{conjecture}(Rigid power implies uniformly small rigid power)\label{C:MainConjecture2} 
For any Garside group~$G$, there exists an integer $\calR_G$ such that for any element $y$ of~$G$ which is conjugate to a rigid element and which has a rigid power, $r(y)\leqslant \calR_G$.
\end{conjecture}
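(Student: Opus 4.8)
My plan is to first reduce rigidity of $y^n$ to numerical data attached to the powers of $y$, and then to extract the uniform bound from the geometry of the axis. Fix a rigid conjugate $x$ of $y$, so that $\inf_s(y)=\inf(x)=:p$ and $\sup_s(y)=\sup(x)=:p+r$; since $x^n$ is a rigid conjugate of $y^n$, we get $\inf_s(y^n)=np$ and $\sup_s(y^n)=n(p+r)$. As $\inf$ is superadditive and $\sup$ subadditive along powers, the defects $\alpha_n:=np-\inf(y^n)\geqslant 0$ and $\beta_n:=\sup(y^n)-n(p+r)\geqslant 0$ are each subadditive, so their common zero set $\{n:\ y^n\in SSS(y^n)\}$ is a subsemigroup of $(\mathbb N_{>0},+)$. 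Rigidity of $y^n$ is this super summit condition together with the wrap-around condition that the last and suitably $\tau$-twisted first canonical factors of $y^n$ be left-weighted; I would verify that the associated wrap-around defect is subadditive as well, so that
\[
P:=\{\,n\geqslant 1:\ y^n\text{ is rigid}\,\}
\]
is a subsemigroup of $\mathbb N_{>0}$, which is nonempty by hypothesis and closed under multiples because a positive power of a rigid element is rigid. Crucially, these properties \emph{alone} do not bound $\min P=r(y)$: a subsemigroup like $\langle a,b\rangle$ has smallest element $\min(a,b)$, which can be large, and the boundedness of $\alpha_n,\beta_n$ only gives a bound depending on the conjugating element, not on $G$. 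The uniform bound must therefore come from the geometry.

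\textbf{Passing to the axis and a rotation.} Set $N:=r(y)$ and $X:=y^N$, a rigid element. In the Cayley graph of $G/\langle\Delta^e\rangle$, $X$ determines a bi-infinite geodesic axis, namely the periodic bi-infinite left normal form with marked factor boundaries, invariant up to $\tau$ under the shift by one period of $\ell(X)=Nr$ factors. The element $y$ and all its powers share this same axis: left multiplication by $y$ translates it, and $y^N=X$ realizes exactly one mark-preserving full-period shift. Thus $y$ translates the axis while permuting the factor-boundary marks with period $N$, i.e. the \emph{phase} of $y$ relative to the canonical-factor structure returns to zero after exactly $N$ steps. In particular $P=N\mathbb N$, so $r(y)$ is genuinely the denominator of a rotation number rather than merely the smallest generator of the semigroup $P$. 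This reframes the goal: bound the order of this phase-rotation uniformly in $G$.

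\textbf{The centralizer, and the main obstacle.} The key observation for the uniform bound is that $y$ commutes with $X=y^N$, so $y$ lies in the centralizer $Z_G(X)$ of a rigid element. For braid groups (and more generally for rigid, hence pseudo-Anosov-type, elements) the structure of such centralizers is understood: $Z_G(X)$ is virtually cyclic, containing $Z(G)=\langle\Delta^e\rangle$ and the infinite cyclic group generated by a primitive rigid root $x_0$ of $X$ along the axis, with a \emph{finite} torsion part whose elements are periodic (finite order modulo the center). The plan is to write $y=x_0^{\,u}\cdot c$ where $c\in Z_G(X)$ has trivial translation along the axis, so that $c$ carries precisely the phase-rotation; then $c$ is periodic and $N=r(y)$ divides the order of the image of $c$ in $G/Z(G)$. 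Since periodic elements of a Garside group of spherical type fall into finitely many conjugacy classes, with finitely many orders (for $\calB_m$, dividing $m$ or $m-1$ modulo the center), one may take $R_G$ to be the least common multiple of these orders. The main obstacle is exactly this last reduction: showing that the fractional rotation of the axis induced by $y$ is \emph{not} an accidental high-order cyclic symmetry of a long necklace of canonical factors (whose order would grow with $\ell(y)$), but is implemented by an honest periodic element $c$ of $G$. Equivalently, one must prove that the obstruction to rigidity of $y$ is governed entirely by the bounded torsion part of $Z_G(X)$; controlling this finite part uniformly as $X$ ranges over all rigid elements is the heart of the matter, and is where I expect the real difficulty to lie.
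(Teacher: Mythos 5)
Be aware first of all that Conjecture~\ref{C:MainConjecture2} is stated in the paper as an \emph{open} conjecture: the authors prove it only for $\calB_3$ (Theorem~\ref{T:main}(a), giving $R_{\calB_3}=2$ by an elementary analysis of the initial and final letters of normal forms of $3$-braids), and for $\calB_4^*$ they establish the related Property~$(*)$ with bound $27$ by a case analysis on diagonal letters. There is therefore no general proof in the paper to compare yours against, and your text, which you honestly present as a strategy with an unresolved ``main obstacle'', does not supply one. A smaller remark on your first step: subadditivity of the defects $\alpha_n,\beta_n$ only makes $P=\{n:\ y^n\ \text{rigid}\}$ a numerical semigroup closed under multiples, which is far from forcing $P=N\mathbb N$; the paper gets $P=r(y)\mathbb N$ from Lemma~\ref{L:LatticeOfExponents}, i.e.\ from the fact that two rigid elements with a common power have a common \emph{rigid} root, which yields closure of $P$ under $\gcd$. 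Your ``rotation number'' justification of $P=N\mathbb N$ is a picture, not an argument.

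The decisive gap is the one you flag yourself, and it is not merely difficult but structurally wrong as stated. Take $X=y^{N}$ pseudo-Anosov in a braid group, the most favourable case for your centralizer strategy: then $Z_G(X)\cong\mathbb Z^2$ is generated by the \emph{primitive} root $x_1$ of (a central multiple of) $X$ together with a periodic element, and that periodic generator is typically central, hence carries no phase information at all. The primitive \emph{rigid} root $x_0=x_1^{r(x_1)}$ generates a subgroup of index $r(x_1)$, so the decomposition $y=x_0^{u}c$ with $c$ periodic simply does not exist for $y=x_1$ unless $r(x_1)$ already divides the order of a periodic element of the centralizer; bounding $r(x_1)$ is exactly the conjecture, so the reduction is circular. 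The paper's Example~\ref{E:TheBigOne} makes this concrete: in $\calB_8$ there is a conjugate $y$ of a rigid pseudo-Anosov braid with $r(y)=12$, whereas every periodic element of $\calB_8$ has order dividing $8$ or $7$ modulo the centre, so no periodic element of the group can implement a phase rotation of order $12$. (The authors' own reading of that example -- a hidden symmetry of period $m(m-1)$ in $\calB_{2m}$ -- suggests the relevant ``rotation'' lives in some interior or quotient structure, not in $Z_G(X)$.) Finally, for general Garside groups the inputs you invoke (virtually cyclic centralizers of rigid elements, finitely many bounded orders of periodic elements) are braid-group theorems resting on Nielsen--Thurston theory and are not available; rigid elements need not even be pseudo-Anosov-like, as the rigid reducible braids $\Delta^{2k}\sigma_1^{\ell}\in\calB_3$ appearing in the paper's proof of Theorem~\ref{T:main}(a) show.
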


\begin{notation}
We say that in a group $G$ \emph{roots are unique up to conjugacy} if for any two elements $x_1, x_2$ and any positive integer~$k$, the identity $x_1^k=x_2^k$ implies that $x_1$ and $x_2$ are conjugate. For instance, this property is satisfied in braid groups (and roots of pseudo-Anosov elements are actually unique, not just up to conjugacy)~\cite{GMRootUnique}, in Artin-Tits groups of type~$B$~\cite{LeeLeeRoots}, in complex braid groups of rank~2~\cite{GarnierRoots}, and it may well hold in all spherical-type Artin-Tits groups~\cite{DDGKM}(Conjecture X.3.10).
\end{notation}

\begin{proposition}\label{P:ImplicConjectures}
If Conjecture~\ref{C:MainConjecture2} holds in some Garside group~$G$ where roots are unique up to conjugacy, then our Main Conjecture~\ref{C:MainConjecture} holds for the same group~$G$. 
\end{proposition}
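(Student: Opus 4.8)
The plan is to express $|SC(x^n)|$ as a divisor sum whose summands have uniformly bounded support, from which periodicity with a uniformly bounded period is immediate. Concretely, I will produce non‑negative integers $a_1,a_2,\ldots$, depending on $x$ but with $a_d=0$ for $d>R_G$, such that
\[
|SC(x^n)| \;=\; \sum_{d\mid n} a_d \;=\; \sum_{\substack{d\mid n\\ d\le R_G}} a_d \qquad (n\ge 1).
\]
Since, for $d\le R_G$, the truth value of $d\mid n$ depends only on $n$ modulo $L_G:=\operatorname{lcm}(1,2,\ldots,R_G)$, the right‑hand side is periodic in $n$ with period dividing $L_G$. Taking $\mathcal P_G$ to be the finite set of positive divisors of $L_G$ then proves the Main Conjecture for $G$, because the minimal period of each sequence $(|SC(x^n)|)_n$ divides $L_G$ and hence lies in $\mathcal P_G$.

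To set up the divisor sum, I first use that $x$ rigid implies $x^n$ rigid with $SC(x^n)$ equal to its set of rigid conjugates. Every $z\in SC(x^n)$ can be written $z=g^{-1}x^ng=(g^{-1}xg)^n$, so $z=y^n$ with $y\sim x$ and $y^n$ rigid; conversely every such $y^n$ lies in $SC(x^n)$. Thus $SC(x^n)=\{\,y^n : y\sim x,\ y^n\text{ rigid}\,\}$. Each such $y$ is conjugate to the rigid element $x$ and possesses a rigid power, so by Conjecture~\ref{C:MainConjecture2} the integer $r(y)$ is defined and $r(y)\le R_G$. Granting the crux below (that $y^n$ is rigid iff $r(y)\mid n$), I partition according to the value $d=r(y)\in\{1,\ldots,R_G\}$:
\[
SC(x^n)=\bigcup_{\substack{d\mid n\\ d\le R_G}} A_{d,n},\qquad A_{d,n}:=\{\,y^n : y\sim x,\ r(y)=d\,\}.
\]
This union is disjoint: if $y_1^n=y_2^n$ then, since roots in $G$ are unique up to conjugacy, $y_1\sim y_2$, and as $r(\cdot)$ is plainly a conjugacy invariant we get $r(y_1)=r(y_2)$. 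Hence $|SC(x^n)|=\sum_{d\mid n,\ d\le R_G}|A_{d,n}|$, and it remains to see that $|A_{d,n}|$ depends only on $d$.

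Fix $d$ with $d\mid n$ and set $a_d:=|A_{d,d}|$, which is finite since $A_{d,d}\subseteq SC(x^d)$. The map $A_{d,d}\to A_{d,n}$, $w\mapsto w^{n/d}$, is well defined and surjective because $(y^d)^{n/d}=y^n$. It is injective too: if $w_1^{n/d}=w_2^{n/d}$ with $w_i\in A_{d,d}$, then each $w_i=y_i^d$ with $r(y_i)=d$ is rigid, so their common power is a rigid element admitting two rigid $(n/d)$‑th roots; but the left normal form of a $k$‑th power of a rigid element is a $k$‑fold twisted concatenation of the normal form of its base, so that base is recovered from the power, whence $w_1=w_2$. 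Therefore $|A_{d,n}|=a_d$ independently of $n$, and $a_d=0$ once $d>R_G$ because then no conjugate $y$ of $x$ has $r(y)=d$. This gives the displayed divisor sum and finishes the argument modulo the crux.

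The main obstacle is the claim that, for $y$ possessing a rigid power, $T_y:=\{n\ge 1 : y^n\text{ rigid}\}$ equals the set of positive multiples of $r(y)$; equivalently, $T_y$ is closed under greatest common divisors. The hard part will be proving this closure. My plan: given $a,b\in T_y$, set $g=\gcd(a,b)$, $m=\operatorname{lcm}(a,b)$, write $a=g\alpha$, $b=g\beta$ with $\gcd(\alpha,\beta)=1$, and analyse the cyclic sequence of canonical factors of the rigid element $y^m$. Writing $y^m=(y^a)^{m/a}$ and $y^m=(y^b)^{m/b}$ exhibits two periods, of lengths $a\lambda$ and $b\lambda$ inside a sequence of length $m\lambda$, where $\lambda=\ell(y^a)/a=\ell(y^b)/b$ is the common stable canonical length ($\ell$ denoting canonical length). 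Since $(a-g)(b-g)\ge 0$ gives $a\lambda+b\lambda-\gcd(a\lambda,b\lambda)\le m\lambda$, the Fine–Wilf theorem forces period $g\lambda$, producing a rigid $Z$ with $Z^{m/g}=y^m$; comparing $Z^\alpha$ with $y^a$ and $Z^\beta$ with $y^b$ (using uniqueness of rigid roots) and then a Bézout relation $\mu\alpha+\nu\beta=1$ yields $Z=y^g$, so $g\in T_y$. Closure under $\gcd$ then gives $T_y=r(y)\,\mathbb Z_{>0}$ immediately. I expect the delicate points to be making this cyclic Fine–Wilf argument rigorous in the Garside setting and, in particular, absorbing the $\tau$‑twist that accompanies taking powers — best handled by passing to a power in which $\Delta$ becomes central, or by reducing to the case $\inf=0$ — together with the degenerate case $\lambda=0$, where $y$ is a power of $\Delta$ and the statement is direct.
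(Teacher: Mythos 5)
Your overall strategy---writing $|SC(x^n)|$ as a divisor sum $\sum_{d\mid n}a_d$ whose coefficients vanish for $d>R_G$, and reading off periodicity of period dividing $\operatorname{lcm}(1,\ldots,R_G)$---is exactly the strategy of the paper, and your treatment of the crux (that $y^n$ is rigid iff $r(y)\mid n$, obtained by extracting a common rigid root from two coprime periodicities of the factor sequence) and of the injectivity of $w\mapsto w^{n/d}$ match the paper's Lemma~\ref{L:LatticeOfExponents} and Lemma~\ref{L:PowersAndConjGraphs}(1). However, there is a genuine gap in the disjointness of your decomposition. You assert that ``$r(\cdot)$ is plainly a conjugacy invariant,'' and this is false: the entire paper is about the phenomenon that a rigid element (with $r=1$) can have conjugates $y$ with $r(y)=2,3,\ldots$. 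Uniqueness of roots up to conjugacy only tells you that two $n$-th roots $y_1,y_2$ of the same $z\in SC(x^n)$ are \emph{conjugate}, not equal, so they may well have different values of $r$. A concrete counterexample lives in $\calB_3$: take $x=\Delta$, $y_1=\Delta$ (rigid, so $r(y_1)=1$) and $y_2=\sigma_1^{-1}\Delta\sigma_1=\sigma_2\sigma_1^2$, which is not rigid but satisfies $y_2^{\,2}=\Delta^2$, so $r(y_2)=2$. Then $y_1^{\,2}=y_2^{\,2}=\Delta^2$, so $\Delta^2$ lies in both $A_{1,2}$ and $A_{2,2}$, and your formula would give $|SC(\Delta^2)|=a_1+a_2\geqslant 2$, whereas $SC(\Delta^2)=\{\Delta^2\}$ has one element. (This is precisely the example $y=\sigma_1\sigma_1\sigma_2$ recorded at the end of the paper's proof of Theorem~\ref{T:main}(a).)

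The paper's proof circumvents this by indexing the decomposition not by roots $y$ of $x$ graded by $r(y)$, but by the \emph{primitive} elements of $\bigcup_{n}SC(x^n)$, i.e.\ rigid conjugates of powers of $x$ that are not powers of other elements of this union; it then proves, using the common-rigid-root Lemma~\ref{L:LatticeOfExponents}(a) together with uniqueness of roots up to conjugacy, that two \emph{distinct} primitives cannot share a power, so that each $z\in SC(x^n)$ is a power of a unique primitive and the divisor sum counts correctly. In the example above, $y_2^{\,2}=\Delta^2$ contributes nothing at level $2$ because it is already a power of the level-$1$ element $\Delta$, even though its root $y_2$ has $r(y_2)=2$. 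To repair your argument you would need to replace the grading condition ``$r(y)=d$'' by ``$y^d$ is primitive,'' at which point your divisor sum becomes the paper's.
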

Our first main result is that bounded implies periodic:

\begin{theorem}\label{T:LatticeOfExponents}
(a) Let $G$ be a Garside group, and $y$ an element which possesses a rigid power. Then $y^n$ is rigid if and only if $n$ is a multiple of~$r(y)$.

(b) Suppose $G$ is a Garside group where roots are unique up to conjugacy, and
$x$~is a rigid element of~$G$. If the sequence $\left( |SC(x^n)| \right)_{n\in\mathbb N}$ is bounded then it is periodic. 
More precisely, if there is a minimal positive integer~$r_*$ such that $|SC(x^{r_*})| = \sup_{n\in\mathbb N} |SC(x^n)|$, then the sequence is periodic of period~$r_*$.
\end{theorem}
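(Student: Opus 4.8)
The plan is to prove (a) first and then use it as the combinatorial engine for (b). Throughout, write $\ell(\cdot)$ for the canonical length and recall the quoted facts that powers (positive and negative) of rigid elements are rigid and that $y$ is rigid iff $y^{-1}$ is.

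\textbf{Part (a).} The implication $r(y)\mid n\Rightarrow y^n$ rigid is immediate from the fact that powers of rigid elements are rigid. For the converse I would show that $R(y):=\{n\in\Z : y^n\text{ is rigid}\}$ is a \emph{subgroup} of $\Z$; as it contains $r(y)$ and $r(y)$ is its least positive element, this forces $R(y)=r(y)\Z$. Closure under negation is the quoted inverse-rigidity fact, so the content is closure under $\gcd$. To obtain it I attach to $y$ a bi-infinite ``axis'': starting from the rigid power $y^{r(y)}$ with left normal form $\Delta^{P}z_1\cdots z_L$, rigidity glues the factors of all powers $(y^{r(y)})^k$ into one two-sided sequence $(a_i)_{i\in\Z}$ in which every consecutive pair is left-weighted and which is $\Delta$-periodic, $a_{i+L}=\tau^{-P}(a_i)$. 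The normal form of any rigid power $y^m$ is then exactly a length-$\ell(y^m)$ window of $(a_i)$, and $y^m$ is rigid precisely when the sequence is periodic (up to the $\tau$-twist) with period $\ell(y^m)$.

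Consequently, if $y^a$ and $y^b$ are both rigid then $(a_i)$ admits both periods $\ell(y^a)$ and $\ell(y^b)$, so by a Fine--Wilf argument (carried out with the $\tau$-twist bookkeeping) it admits the period $\gcd(\ell(y^a),\ell(y^b))$. A short computation with the stable infimum and the stable canonical length identifies this $\gcd$ with $\ell(y^{\gcd(a,b)})$, and Bézout applied to $\inf(y^a),\inf(y^b)\in\Z$ shows the accompanying $\Delta$-shift is an integer; hence $y^{\gcd(a,b)}$ is a full-period window, i.e.\ rigid. Taking $a=n$ and $b=r(y)$ with $y^n$ rigid gives $y^{\gcd(n,r(y))}$ rigid, and minimality of $r(y)$ forces $\gcd(n,r(y))=r(y)$, i.e.\ $r(y)\mid n$. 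The fiddly point here is the $\tau$-twisted Fine--Wilf bookkeeping.

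\textbf{Part (b).} Write $a_n=|SC(x^n)|$; since $x$ is rigid, $SC(x^n)$ is the set of rigid conjugates of $x^n$. For $d\mid n$, raising to the power $n/d$ sends a rigid conjugate of $x^d$ to a rigid conjugate of $x^n$, giving $\Phi_{d,n}\colon SC(x^d)\to SC(x^n)$, $v\mapsto v^{n/d}$. Conversely, given $w\in SC(x^n)$, conjugating $x$ to an $n$-th root $y$ of $w$ and applying part (a) gives $r(y)\mid n$ with $v:=y^{r(y)}\in SC(x^{r(y)})$ and $w=\Phi_{r(y),n}(v)$; so every element of $SC(x^n)$ is an $(n/d)$-th rigid power for some $d\mid n$. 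Calling $w$ \emph{primitive} when it is not a proper rigid power, and writing $P(d)\subseteq SC(x^d)$ for the primitive elements, I would assemble
\[
 SC(x^n)=\bigsqcup_{d\mid n}\Phi_{d,n}\big(P(d)\big).
\]

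\textbf{The main obstacle} is making this a genuine disjoint union with each $\Phi_{d,n}$ injective; everything rests on the key lemma: \emph{in a Garside group where roots are unique up to conjugacy, two rigid elements with a common power are equal}. This is exactly where the hypothesis enters. Uniqueness up to conjugacy gives $u_2=gu_1g^{-1}$ with $g$ centralizing $W=u_1^{k}=u_2^{k}$; since $u_1,u_2$ and $W$ all share the axis of $W$ and a rigid $k$-th root of $W$ must be a full-period window based at the $\Delta$-position of $W$, both $u_1$ and $u_2$ equal the same power of the primitive translation of that axis, whence $u_1=u_2$. Granting the lemma, $\Phi_{d,n}$ is injective, the level $d=r(y)$ is well-defined, and $a_n=\sum_{d\mid n}p_d$ with $p_d:=|P(d)|$. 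Boundedness of $(a_n)$ forces $p_d=0$ for all but finitely many $d$; putting $D=\{d:p_d>0\}$, the value $a_n$ depends only on $\{d\in D:d\mid n\}$, so $(a_n)$ is periodic with period dividing $\operatorname{lcm}(D)$. The maximum $\sum_{d\in D}p_d$ is attained exactly when $\operatorname{lcm}(D)\mid n$, so the least $n$ attaining it is $r_*=\operatorname{lcm}(D)$, which is therefore also the minimal period, as claimed.
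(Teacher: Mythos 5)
Your proposal is correct and follows essentially the same route as the paper: part (a) rests on extracting a common rigid root from the uniqueness of the normal form of a common power (the paper does this by cutting that normal form into blocks and observing that a sequence with two coprime periods is constant, which is your twisted Fine--Wilf argument in different clothing), and part (b) uses the same stratification of $SC(x^n)$ by primitive elements, with unique roots up to conjugacy ensuring that distinct primitives have no common power. The one caution is that your ``key lemma'' must be stated for a common power with possibly \emph{different} exponents -- two distinct rigid elements $v_1\neq v_2$ with $v_1^{e_1}=v_2^{e_2}$ need not be equal, but they are both powers of a single rigid root lying in $\bigcup_n SC(x^n)$, which is what the disjointness of $\bigsqcup_{d\mid n}\Phi_{d,n}(P(d))$ actually requires and what your axis argument (equivalently, the paper's Lemma on common rigid roots) delivers.
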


Our second main result is that Main Conjecture~\ref{C:MainConjecture}, is true for some of the most important and basic examples. The definition of the family of \emph{circular groups} from~\cite{GarnierRoots} will be recalled in Section~\ref{S:Proof}, but we mention here already that it contains all the 2-generator Artin groups (type $A_2$, i.e.\ the 3-strand braid group~$\calB_3$, type~$B_2$ and types $I_2(m)$ for $m\geqslant 5$), all of them with their classical and dual Garside structures.

\begin{theorem}\label{T:main}
(a) Conjecture~\ref{C:MainConjecture2}, and hence also the Main Conjecture~\ref{C:MainConjecture}, holds if $G=G(m,\ell)$ is a circular Garside group. 
Namely, $\mathcal P_{G}=\{1\}$ and $\calR_{G}=\frac{m}{m\wedge \ell}$.

(b) Conjecture~\ref{C:MainConjecture} holds for~$\calB_4^*$, the four-strand braid group with the dual Garside structure: namely, $\mathcal P_{\calB_4^*} \subseteq \{1,2,\ldots,lcm(1,\ldots,27)\}$.
\end{theorem}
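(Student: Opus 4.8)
The plan is to combine the structural results already at our disposal with an explicit combinatorial description of rigid braids. By Proposition~\ref{P:ImplicConjectures}, and because roots are unique up to conjugacy in braid groups, a uniform bound $R$ on $r(y)$ (Notation~\ref{N:SmallestRigidPower}) over all conjugates $y$ of a rigid braid yields the Main Conjecture with period dividing $\mathrm{lcm}(1,\dots,R)$; this is how I would obtain $R_{\calB_3}=2$ and the inclusion for $\calB_4^*$. The sharper statement $\mathcal P_{\calB_3}=\{1\}$, however, asserts that the sequence is actually \emph{constant}, and for this I would compute $SC(x^n)$ directly rather than through Proposition~\ref{P:ImplicConjectures}. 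So in each group the task splits into (i) an explicit description of the rigid conjugates of a rigid braid and of its powers, and (ii) a uniform bound on the smallest exponent $r(y)$ that makes a non-rigid conjugate rigid.

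For $\calB_3$ I would exploit that there are only six simple elements $1,\sigma_1,\sigma_2,\sigma_1\sigma_2,\sigma_2\sigma_1,\Delta$, that $\tau$ interchanges $\sigma_1$ and $\sigma_2$, and that $\Delta^2$ is central with $\calB_3/\langle\Delta^2\rangle\cong PSL_2(\Z)=\Z/2 * \Z/3$. After removing the central power of $\Delta$, the left-weightedness rules reduce a rigid braid to a cyclic word on the two-vertex automaton whose edges are the four letters $\sigma_1,\sigma_2,\sigma_1\sigma_2,\sigma_2\sigma_1$ (recording start- and finish-generator), with a $\tau$-twist at the closure when $\inf(x)$ is odd. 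Matching this against the classical combinatorics of cyclically reduced words in the free product, the first step is to show that for pseudo-Anosov $x$ the rigid conjugates are exactly the cyclic rotations of this word together with their $\tau$-images, so that $SC(x)$ is a single cycling/$\tau$ orbit; the periodic and reducible classes are dispatched by direct computation. In every case, replacing $x$ by $x^n$ merely repeats the cyclic word $n$ times and leaves the number of distinct rigid conjugates unchanged, whence $|SC(x^n)|=|SC(x)|$ for all $n$ and $\mathcal P_{\calB_3}=\{1\}$. For $R_{\calB_3}=2$ I would observe that the only way a conjugate $y$ fails rigidity is a single bad junction $\tau^{p}(y_s)\,|\,y_1$ (with $p=\inf(y)$), and since $\tau^2=\mathrm{id}$ in $\calB_3$ the twist responsible for the failure is undone on squaring; checking the finitely many defect patterns then shows $y^2$ is rigid whenever any power is.

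For $\calB_4^*$ the same philosophy applies in the dual structure, but the bookkeeping is far heavier: there are now fourteen simple elements indexed by the non-crossing partitions of four marked points, $\tau$ is the order-four rotation of those points, and $\delta^4=\Delta^2$ is central. I would first enumerate, up to the central shift and the $\tau$-action, the finitely many cyclic words that occur as rigid braids, and verify that $(|SC(x^n)|)_n$ is bounded in each Nielsen--Thurston class (the pseudo-Anosov case via the Morse property recalled in the introduction, the periodic and reducible cases by inspection), so that Theorem~\ref{T:LatticeOfExponents}(b) applies. The heart of the argument is the uniform bound on $r(y)$: tracking how the single failure of left-weightedness in a non-rigid conjugate propagates to its powers, a finite --- and here computer-assisted --- enumeration of the possible domino diagrams shows that the defect is always resolved within at most $27$ steps, i.e.\ $r(y)\leqslant 27$. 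Granting this, Proposition~\ref{P:ImplicConjectures} yields periodicity with period dividing $\mathrm{lcm}(1,\dots,27)$, which is exactly the inclusion $\mathcal P_{\calB_4^*}\subseteq\{1,\dots,\mathrm{lcm}(1,\dots,27)\}$.

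The main obstacle is the uniform bound in part (b). In $\calB_3$ the two-letter alphabet makes the repair of a rigidity defect transparent, and the constant $2$ is forced by $\tau^2=\mathrm{id}$; in $\calB_4^*$ the interaction of the order-four rotation with the non-crossing-partition lattice produces a large number of local configurations, and the real difficulty is to rule out an unbounded tower of powers, each contributing genuinely new rigid conjugates. Controlling this tower --- rather than the comparatively routine boundedness of $SC(x)$ or the $\calB_3$ count --- is where the work concentrates, and it is the source of the explicit but non-sharp constant $27$.
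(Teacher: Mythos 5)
Your high-level strategy --- bound $r(y)$ uniformly and feed it into Proposition~\ref{P:ImplicConjectures} --- is indeed the right frame, and it is the one the paper uses for the periodicity statements. But in both parts the step that carries all the weight is missing. In part (a), the assertion that ``replacing $x$ by $x^n$ merely repeats the cyclic word $n$ times and leaves the number of distinct rigid conjugates unchanged'' is precisely what has to be proved: the whole phenomenon under study is that $SC(x^n)$ can acquire rigid conjugates that are not powers of elements of $SC(x)$ (see Example~\ref{E:SimpleExInB4classical}). The paper rules this out in $\calB_3$ by a homogeneity/weight count: after reducing via Lemma~\ref{L:NoMixing} and Remark~\ref{R:OnlyWeight1} to normal forms with only weight-$1$ letters, left-weightedness forces $x=\Delta^{2k}\sigma_i^\ell$, and any rigid conjugate of $x^n$ must have the same infimum, canonical length \emph{and} total word length, hence equals $x^n$ or $\tau(x)^n$. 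Your $PSL_2(\Z)$ picture could perhaps be made to work, but as written it restates the claim rather than proving it. Likewise your mechanism for $R_{\calB_3}=2$ (``the twist responsible for the failure is undone on squaring'') is not what actually happens: the paper shows that a non-rigid conjugate $y$ of a rigid braid has a rigid power \emph{only} when $y$ is conjugate to an odd power of $\Delta$ (whence $y^2=\Delta^{2t}$ is central); in every other case $I(y^n)=I(y)\neq F(y)=F(y^n)$ for all $n$ (Lemma~\ref{L:productInB3}), so no power is ever rigid. There are not ``finitely many defect patterns'' to check, since the canonical length is unbounded; Lemma~\ref{L:productInB3} is the non-trivial input that replaces such a check.

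In part (b) the gap is more serious: the bound $r(y)\leqslant 27$ is the entire content of the theorem, and you obtain it by postulating ``a finite --- and here computer-assisted --- enumeration of the possible domino diagrams''. This is not a finite enumeration as stated (rigid elements of $\calB_4^*$ have unbounded canonical length, hence unboundedly many domino diagrams), and no finiteness reduction is offered. The paper's route is structural: reduce to rigid elements with only weight-$1$ letters, so that only gray arrows occur (Observation~\ref{O:NoWeight2}); then split according to whether every rigid conjugate of every power of $x$ contains a diagonal letter. In the diagonal case $\partial\phi(y)$ has only two proper nontrivial prefixes, and a counting argument (Lemmas~\ref{L:LimitOnPower}, \ref{L:NoGrayArrows}, \ref{L:NoFurtherPowers}) forces $r(y)\leqslant 2$; in the non-diagonal case the diameter-$3$ bound on the conjugacy graph imported from~\cite{CalvezWiestConjugacyB4}, combined with the multiplicity bound $r(z)\leqslant 3\, r(y)$ along each edge coming from Lemmas~\ref{L:LimitOnPower} and~\ref{L:SumOfEdgeLabels}, gives $r(y)\leqslant 3^3=27$. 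None of these ingredients --- the weight reduction, the diagonal dichotomy, the transport/multiplicity count, or the imported diameter bound --- appears in your proposal, and without them the constant $27$ has no derivation. Finally, boundedness of $(|SC(x^n)|)_n$ via the Morse property plus Theorem~\ref{T:LatticeOfExponents}(b) would give periodicity for each pseudo-Anosov $x$ separately, but not the uniform control of the period over all rigid $x$ that the theorem asserts; that uniformity comes only from the uniform bound on $r(y)$.
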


\begin{remark}
Using a detailed combinatorial analysis, it would be possible to prove that in~$\calB_4^*$, the sequence $|SC(x^n)|$ can only have three possible behaviors: it is either constant, or it is periodic of period~2, or it is periodic of period~3 with $|SC(x^2)|=|SC(x)|$ and $|SC(x^3)|=4\cdot |SC(x)|$. Correspondingly, for any rigid~$x$, either all conjugates $y$ of~$x$ with a rigid power satisfy $r(y)=1$ or $r(y)=2$, or they all satisfy $r(y)=1$ or $r(y)=3$.
We will only prove the weaker result stated in Theorem~\ref{T:main}(b).
\end{remark}

\begin{remark}
There is a potential connection of this work to the world of hierarchically hyperbolic groups~\cite{SistoHHS} and injective spaces \cite{HaettelHodaPetyt}. Indeed, the property of~$x$ not to have new members of $SC(x^n)$ for large values of~$n$ is reminiscent of the strong constriction property, which is equivalent to the strong contraction property~\cite{ArzhantsevaCashenTao}.
Now mapping class groups act on various injective spaces which are quasi-isometric to the Cayley graph, and axes of pseudo-Anosov elements are strongly contracting in these spaces~\cite{SistoZalloum}. An example of such an injective space is the one constructed by Petyt and Zalloum in~\cite{PetytZalloum}. It is reasonable to wonder if an analogue property of our conjectured ``uniform boundedness of $SC(x^n)$'' holds in these injective spaces: in a fixed mapping class group, are pseudo-Anosov axes uniformly strongly contracting, in the sense that there is a constant $C$ (depending only on the group) such that any ball disjoint from the axis of a pseudo-Anosov element~$x$ projects to a part of the axis whose diameter is bounded by $C\cdot \ell(x)$, where $\ell(x)$ denotes the translation length of~$x$? A.~Zalloum has informed the authors~\cite{ZalloumPersonal} that indeed he believes this to be the case.
\end{remark}

The paper is organized as follows: in Section~\ref{S:Prelim} we present the required background material from Garside theory -- much of the material in this section is well-known. Section~\ref{S:Examples} presents many examples which illustrate the subtlety of the problem. In Section~\ref{S:Periodicity} we prove our first main result, Theorem~\ref{T:LatticeOfExponents} and the fact that, for groups where roots are unique up to conjugacy, the truth of Conjecture~\ref{C:MainConjecture2} implies the truth of our Main Conjecture~\ref{C:MainConjecture}. Finally in Section~\ref{S:Proof} we give the proof of our second main result, Theorem~\ref{T:main}, proving our conjecture in some important special cases.


\section{General framework and preliminary results}\label{S:Prelim}

We start with some reminders of general Garside theory -- the reader can consult \cite{Birman-Gebhardt-GM1, Birman-Gebhardt-GM2, DehornoyGarside, DehornoyQuadrNorm, ElRifaiMorton, McCammondIntroGarside}. 
We recall that in a Garside group~$G$, equipped with a Garside structure, every element~$x$ is represented by a unique word in \emph{normal form}, taking the shape $x=\Delta^k\cdot x_1| \ldots |x_\ell$. 
Here $\Delta$ is the particular element given by the Garside structure (e.g. the half-twist braid in $\calB_m$ and the $\frac{2\pi}{m}$-twist braid in $\calB^*_m$); each $x_i$ is a \emph{Garside generator} or a \emph{simple element}, and each pair of successive letters $x_i\cdot x_{i+1}$ is \emph{left-weighted} -- these terms will be defined shortly -- and we indicate this by writing the product $x_i|x_{i+1}$. 
We denote $k=\inf(x)$ the \emph{infimum}, $k+\ell=\sup(x)$ the \emph{supremum}, and $\ell=\ell_{can}(x)$ the \emph{canonical length} of~$x$. The automorphism $\tau\co G\to G$ is defined by $\tau(x)=\Delta^{\!-1} x \Delta$.

We recall the \emph{prefix ordering} $\preccurlyeq$ on $G$, which is a partial ordering given by $g_1\preccurlyeq g_2$ if and only if $g_1^{-1}g_2\in G^+$, where $G^+$ is the submonoid of positive elements of $G$, that is, products of non-negative powers of generators.
This is a lattice ordering: any two elements $g_1$, $g_2$ of~$G$ have a meet $g_1\wedge g_2$, and a join $g_1\vee g_2$. The Garside generators of~$G$, or \emph{simple elements}, are precisely the elements $x$ satisfying $1\preccurlyeq x\preccurlyeq \Delta$. The \emph{complement} $\partial x$ of a simple element~$x$ is $\partial x=x^{-1}\Delta$. A product of two simple elements $x_1\cdot x_2$ is said to be \emph{left-weighted}, or \emph{in normal form} if there is no prefix $p$ of $x_2$ such that $x_1 p$ is simple, or equivalently, if $x_2\wedge \partial x_1=1$; in this case, we write $x_1|x_2$. This completes our definition of the normal form.

For an element $x$ of~$G$ with normal form $\Delta^{\inf(x)} x_1|\ldots|x_\ell$ and $\ell>0$, the \emph{initial} and \emph{final factors} are defined by $\iota(x)=\tau^{-\inf(x)} (x_1)$ and $\tau(x)=x_\ell$. A element~$x$ is said to be \emph{rigid} if either it is a power of~$\Delta$ or if the product $\phi(x)\cdot \iota(x)$ is in normal form as written: $\phi(x)|\iota(x)$. 
Equivalently, saying that $x$ is rigid means that the normal form of~$x^2$ is as expected: $x^2=\Delta^{2\cdot\inf(x)}\tau^{\inf(x)}(x_1)|\ldots |\tau^{\inf(x)}(x_\ell)|x_1|\ldots|x_\ell$.
If $\inf(x)=0$, the idea is that the normal form word representing~$x$, regarded as a cyclic word, is still in normal form. 

If an element $g$ of~$G$ is conjugate to a rigid element, then, following \cite{GebhardtGMCyclicSliding}, we define $SC(g)$ to be the set of all rigid conjugates of~$g$. (In fact, this is not the original definition of~\cite{GebhardtGMCyclicSliding}, but it is proven in this paper that the original definition -- which we won't need -- coincides with the one given here in the special case of rigid elements.)


\subsection{Calculating rigid conjugates}

To any rigid element~$x$, we will associate a graph with colored, oriented edges called the \emph{conjugacy graph}, already used in \cite{CalvezWiestConjugacyB4}, and very similar to the graphs introduced earlier in \cite{Birman-Gebhardt-GM2}. 

\begin{definition}\label{D:ConjugacyGraph}
Let $x$ be a rigid element of~$G$. The \emph{conjugacy graph} of~$x$ has one vertex for every orbit in $SC(x)$ under the action of \emph{cycling} (i.e.conjugation by~$\iota(x)$) and of $\tau$ (conjugation by~$\Delta$). 
The edges, which we will call \emph{arrows}, come in two colors: black and gray. A vertex represented by an element~$y$ is connected by a \emph{black arrow} to a vertex represented by~$z$ if there is a prefix $c\preccurlyeq \iota(y)$ which conjugates $y$ to an element in the orbit of $z$, i.e.if $c^{-1}y c$ belongs to the orbit of $z$. By contrast, there is a \emph{gray arrow} between these two vertices if the conjugator~$c$ satisfies $c\preccurlyeq \partial \phi(y)$. 
Any arrow can carry a label of the form ``$\times k$'' (with $k\geqslant 2$), indicating that there are $k$ different conjugations of one representative of the source vertex, yielding various representatives of the target vertex.
\end{definition}

One can easily check that $\tau$ and cycling commute. If $y$ is rigid, every element in the orbit of $y$ under cycling and $\tau$ has the form $\Delta^p \tau^k(y_j)|\cdots|\tau^k(y_r)| \tau^{k-p}(y_1)|\cdots| \tau^{k-p}(y_{j-1})$ for some $j\in \{1,\ldots, r\}$ and some integer $k$. Hence, if $e$ is the smallest positive integer such thast $\Delta^e$ is central (in braid groups with the classic Garside structure $e=2$, and with the dual Garside structure $e=n$), then the size of the orbit of $y$ is at most $er$. 

One can also check that, if there are exactly $k$ black (resp. gray) arrows conjugating an element $y$ to an element in the orbit of $z$, then, for every $y'$ in the orbit of $y$, there will precisely be $k$ black (resp. gray) arrows conjugating $y'$ to an element in the orbit of $z$. Hence, the arrows in the conjugacy graph (and their multiplicities) are well defined.

\begin{remark}\label{R:DiffDefnsConjGraph}
Our conjugacy graphs are almost the same as the graphs introduced in \cite{Birman-Gebhardt-GM2}, but there are some subtle differences: firstly, in the graphs of \cite{Birman-Gebhardt-GM2}, the vertices are orbits of $SC(x)$ under cycling, but not under the action of~$\tau$.
Secondly, the graphs of \cite{Birman-Gebhardt-GM2} have fewer arrows per vertex than our graphs, because they only contain minimal arrows (i.e.arrows that are not given by the concatenation of two or more arrows of the same color). 

It follows from the results in~\cite{Birman-Gebhardt-GM2} that conjugacy graphs are always connected, and that there is a path from any vertex to any other vertex, first along a sequence of gray arrows, and then along a sequence of black arrows (or vice versa).
\end{remark}

\begin{remark}\label{R:DominoVsConjugacy}
In this paper, we are drawing two types of diagrams, which should not be confused. 
On the one hand, we have \emph{domino diagrams} (Figures~\ref{F:ConjugationInB4easy}, \ref{F:DominoDiagramInf0}, \ref{F:DominoDiagramInfNeq0}, \ref{F:ConjugationInB4dPer2}, \ref{F:ConjugationInB4dWithInf}, \ref{F:ConjugationInB4dPer3}), which live in the Cayley graph of~$G$ (with generators $=$ simple elements). In domino diagrams, arrows indicate right multiplication by a simple element, and the little blue arcs connecting the end of one arrow $x_i$ to the start of another arrow $x_{i+1}$ indicate that the word $x_i\cdot x_{i+1}$ is in normal form. 
On the other hand, we have have pictures of \emph{conjugacy graphs} (Figures~\ref{F:ConjGraphB4easy}, \ref{F:TheBigOne}, \ref{F:ConjGraphB4dPer2}) from Definition~\ref{D:ConjugacyGraph}, where (black or gray) arrows indicate conjugations by simple elements.
\end{remark}

Next, we recall the standard method for calculating the conjugate of a rigid braid $x=\Delta^k x_1|\ldots|x_\ell$ (with $\ell>0$) along a gray arrow, i.e.calculating the normal form of $c^{-1}xc$ for some simple element $c$ such that $x_\ell\cdot c\preccurlyeq \Delta$. 
We will see that this is essentially the same problem as calculating the normal form of~$x_\ell\cdot x\cdot c$ for such an element~$c$. 
We will first suppose, for simplicity, that $\inf(x)=0$; the process is illustrated by the domino diagram in Figure~\ref{F:DominoDiagramInf0}, in the special case $\ell=3$. For an explicit example, see Figure~\ref{F:ConjugationInB4easy}.

\begin{figure}[htb]
\bigfigure{ \begin{center}\begin{tikzpicture}
\node[dot] (u1) at (0,1.5) {};
\node[dot] (u2) at (1.5,1.5) {};
\node[dot] (u3) at (3,1.5) {};
\node[dot] (u4) at (4.5,1.5) {};
\node[dot] (u5) at (6,1.5) {};
\node[dot] (d2) at (1.5,0) {};
\node[dot] (d3) at (3,0) {};
\node[dot] (d4) at (4.5,0) {};
\node[dot] (d5) at (6,0) {};
\draw[-Stealth] (u1) to node[above] {$x_3$} (u2); 
\draw[-Stealth] (u2) to node[above] {$x_1$} (u3);
\draw[-Stealth] (u3) to node[above] {$x_2$} (u4);
\draw[-Stealth] (u4) to node[above] {$x_3$} (u5);
\draw[-Stealth] (u2) to node {$c_0$\phantom{xx}} (d2); 
\draw[-Stealth] (u3) to node {$c_1$\phantom{xx}} (d3);
\draw[-Stealth] (u4) to node {$c_2$\phantom{xx}} (d4);
\draw[-Stealth] (u5) to node {\phantom{w}$c_3=c$} (d5);
\draw[-Stealth] (u1) to node[near start] {\tiny \phantom{wwi}$d_0$} (d2);
\draw[-Stealth] (u2) to node[near start] {\tiny \phantom{wwi}\tiny $d_1$} (d3);
\draw[-Stealth] (u3) to node[near start] {\tiny \phantom{wwi}\tiny $d_2$} (d4);
\draw[-Stealth] (u4) to node[near start] {\tiny \phantom{wwi}\tiny $d_3$} (d5);
\draw[-Stealth] (d2) to node[below] {$y_1$} (d3);
\draw[-Stealth] (d3) to node[below] {$y_2$} (d4);
\draw[-Stealth] (d4) to node[below] {$y_3$} (d5);
%
%
\draw[color=blue,thick] (1.8,1.5) arc [start angle=0, end angle=180, radius=0.3];
\draw[color=blue,thick] (3.3,1.5) arc [start angle=0, end angle=180, radius=0.3];
\draw[color=blue,thick] (4.8,1.5) arc [start angle=0, end angle=180, radius=0.3];
\draw[color=blue,thick] (1.8,0) arc [start angle=0, end angle=135, radius=0.3];
\draw[color=blue,thick] (3.3,0) arc [start angle=0, end angle=135, radius=0.3];
\draw[color=blue,thick] (4.8,0) arc [start angle=0, end angle=135, radius=0.3];
\draw[densely dotted,color=blue,thick] (2.7,0) arc [start angle=180, end angle=360, radius=0.3];
\draw[densely dotted,color=blue,thick] (4.2,0) arc [start angle=180, end angle=360, radius=0.3];
%
\draw[-Stealth,dashed] (1.55,2.1) to node[above] {$x$} (5.95,2.1);
\draw[-] (1.5,2.05) to (1.5,2.15);
\draw[-] (6,2.05) to (6,2.15);
\end{tikzpicture}\end{center}  }
\caption{A domino diagram showing the calculation of the normal form of $c^{-1}x\,c$ if $\inf(x)=0$. It is a non-obvious fact that the word $y_1\,y_2\,y_3$ obtained by this calculation is in normal form. Also, under the hypothesis that both $x$ and $c^{-1}x\, c$ are rigid, it is true but non-obvious that $d_0=d_3$ and $c_0=c_3=c$, meaning that $y_1\, y_2\,  y_3=c^{-1} x\, c$, as desired.}
\label{F:DominoDiagramInf0}\end{figure}

We denote $d_\ell=x_\ell\cdot c$, and recall that this is a simple element by hypothesis. The first calculation step consists in determining the normal form of $x_{\ell-1} d_\ell$ -- we denote the two factors $d_{\ell-1}$ and $y_\ell$, so that $x_{\ell-1}\cdot d_\ell = d_{\ell-1}|y_\ell$. We denote $c_{\ell-1}=x_{\ell-1}^{-1}\cdot d_{\ell-1}$ -- this simple element can be interpreted as ``the initial part of~$d_\ell$ that can be slid into~$x_{\ell-1}$''. 
Then we work our way backwards through the word: $x_{\ell-2}d_{\ell-1}=d_{\ell-2}|y_{\ell-1}$, etc. The last step consists in calculating the normal form $x_\ell\cdot d_1=d_0|y_1$, and $c_0=x_\ell^{-1}d_0$. 
The right domino rule from \cite{DehornoyQuadrNorm} tells us the non-obvious fact that the word $y_1\cdot y_2\cdot\ldots\cdot y_\ell$ thus obtained is in normal form (as indicated by the blue dotted arcs in Figure~\ref{F:DominoDiagramInf0}).

\begin{lemma}
Suppose that $x$ is rigid with $\inf(x)=0$, that $c\preccurlyeq \partial\phi(x)$, and that $c^{-1}\cdot x\cdot c$ is also rigid. Suppose the normal form of $x_\ell|x_1|\ldots|x_\ell\cdot c$ is $d_0|y_1|\ldots |y_\ell$. Then $d_0=d_\ell$, and $c_0=c_\ell=c$. In particular, $c^{-1}xc = y_1|y_2|\ldots|y_\ell$. 
\end{lemma}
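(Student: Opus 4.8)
The plan is to reduce everything to the single identity $c_0=c$. Indeed, the domino computation yields $x_\ell\cdot x\cdot c=d_0|y_1|\cdots|y_\ell$ with $d_0=x_\ell c_0$, hence $x\cdot c=c_0\cdot y_1\cdots y_\ell$; once $c_0=c$ is known we get $d_0=x_\ell c=d_\ell$ and $c^{-1}xc=y_1|\cdots|y_\ell$, which is the claim. Unwinding the sliding, the diagram records the commuting squares
\[
x_j\,c_j=c_{j-1}\,y_j\qquad(1\le j\le\ell),\qquad c_\ell=c,
\]
and, by the right domino rule, it is the \emph{unique} staircase produced from the top row $x_1|\cdots|x_\ell$ and the right-hand conjugator $c$; its bottom row $y_1|\cdots|y_\ell$ is in normal form.

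First I would build a second, ``clean'' staircase directly from the rigid conjugate $x'=c^{-1}xc$. Since rigid elements are super-summit, all rigid conjugates of $x$ share its infimum and canonical length, so $\inf(x')=0$ and $x'=y_1'|\cdots|y_\ell'$ with exactly $\ell$ factors and $\iota(x')=y_1'$. Transporting $c$ under successive cyclings of the pair $(x,x')$ (cycling being conjugation by the initial factor) produces conjugators $s_0=c$ and $s_j=x_j^{-1}\,s_{j-1}\,y_j'$, each $s_j$ conjugating the $j$-fold cycling of $x$ to that of $x'$. After one full turn both factor sequences return to their starting values, and the transport closes up \emph{exactly}:
\[
s_\ell=(x_1\cdots x_\ell)^{-1}\,c\,(y_1'\cdots y_\ell')=x^{-1}\,c\,x'=x^{-1}c\,(c^{-1}xc)=c.
\]
Thus the clean staircase has the same right-hand vertical $s_\ell=c$ and the same top row $x_1|\cdots|x_\ell$ as the domino staircase, its left vertical is $s_0=c$, and its bottom row $y_1'|\cdots|y_\ell'$ is the normal form of $x'$.

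The heart of the proof is to identify the two staircases. Rearranging the transport relation gives $x_j s_j=s_{j-1}y_j'$, whence $x_{j-1}(x_js_j)=(x_{j-1}s_{j-1})\,y_j'$, so the clean staircase commutes on the nose; what remains is to check that it is a \emph{bona fide} right-domino (normal-form) staircase, namely that every $s_j$ is a simple element, that each $x_js_j$ is simple, and that each bottom edge $(x_{j-1}s_{j-1})|y_j'$ is left-weighted. Granting these local conditions, the clean staircase is a completion of the same input data $(x_1|\cdots|x_\ell,\ c)$ with bottom row in normal form, so the uniqueness built into the right domino rule forces $s_j=c_j$ and $y_j'=y_j$ for all $j$; in particular $c_0=s_0=c$, hence $d_0=x_\ell c=d_\ell$ and $c^{-1}xc=y_1|\cdots|y_\ell$.

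The main obstacle is precisely this local verification, and it is where both rigidity hypotheses enter. Simplicity of the transported conjugators $s_j$ is the standard fact that the transport of a simple conjugator between super-summit elements is again simple \cite{Birman-Gebhardt-GM2}; the left-weightedness of the successive squares is obtained by chasing Dehornoy's domino rules \cite{DehornoyQuadrNorm} along the staircase, and the computation only ``wraps around'' consistently because $x$ is rigid (so $x_\ell|x_1$ is left-weighted) and $x'$ is rigid (so $y_\ell'|y_1'$ is left-weighted). I expect the bookkeeping of these weightedness conditions, rather than any conceptual difficulty, to be the technical core of the argument; without rigidity of $x'$ the staircase would fail to close and one would indeed have $c_0\neq c$.
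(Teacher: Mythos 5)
Your proposal reconstructs, essentially from scratch, the argument that the paper itself outsources entirely: the paper's ``proof'' is a one-line citation to Proposition~2.1 of \cite{Gebhardt}, and your conjugators $s_j$ are exactly Gebhardt's iterated transports $u_j$. The skeleton is right and several steps are genuinely correct as written: the reduction of the whole lemma to the single identity $c_0=c$; the computation $s_\ell=x^{-1}c\,x'=c$ showing the transport staircase closes up after one full cycle (note this uses only $x'=c^{-1}xc$, not rigidity of $x'$, so your closing parenthetical slightly misattributes where rigidity enters); and the uniqueness argument, which is just uniqueness of the left normal form of a product of two simple elements applied square by square from the right, starting from $e_\ell=x_\ell s_\ell=x_\ell c=d_\ell$.

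The gap is in the ``local verification,'' and it is not mere bookkeeping. The condition that actually carries the content of the lemma is that each $e_j:=x_js_j=s_{j-1}y_j'$ is \emph{simple}, i.e.\ that $s_j\preccurlyeq\partial x_j=\partial\phi\bigl(x^{(j)}\bigr)$ where $x^{(j)}$ is the $j$-fold cycling of~$x$. The fact you cite --- transport of a simple conjugator between super summit elements is again simple --- gives only $s_j\preccurlyeq\Delta$, which is strictly weaker and insufficient: a product of two simple elements need not be simple. What is needed is the finer statement that transport maps $\{c:1\prec c\prec\partial\phi(y)\}$ into $\{c:1\prec c\prec\partial\phi(\text{cycling of }y)\}$; this is a genuine theorem about transport between super summit elements (it is part of what \cite{Gebhardt} proves, and the paper invokes it again in the remark following Lemma~\ref{L:LimitOnPower}), and it cannot be obtained by ``chasing Dehornoy's domino rules along the staircase,'' since the domino rules of \cite{DehornoyQuadrNorm} presuppose that the squares are already filled with normal forms --- precisely what is at stake. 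By contrast, once simplicity of the $e_j$ is granted, the left-weightedness you worry about is easy and you could have closed it yourself: $e_{j-1}=s_{j-2}y_{j-1}'$ has $y_{j-1}'$ as a right factor, so $\partial(e_{j-1})\preccurlyeq\partial(y_{j-1}')$, and since $y_{j-1}'|y_j'$ is left-weighted (using rigidity of $x'$ for the wrap-around $y_\ell'|y_1'$) we get $\partial(e_{j-1})\wedge y_j'=1$. So the proposal is a correct reduction of the lemma to one transport property, but that property is the heart of the matter and is asserted rather than proved.
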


\begin{proof}
This result is contained in the statement and proof of Proposition~2.1 of \cite{Gebhardt} -- in that paper, our elements $c_i$ are denoted $u_i$. 
\end{proof}

We have seen how to calculate the normal form of $c^{-1}xc$, provided that both $x$ and $c^{-1} x c$ belong to $SC(x)$, that $c\preccurlyeq \partial\phi(x)$ (i.e.the conjugation represents a gray arrow), and that $\inf(x)=0$. 

If $\inf(x)\neq 0$, then we need a slight modification of the previous rule (see Figure~\ref{F:DominoDiagramInfNeq0} and Example~\ref{E:ConjugationInB4dWithInf}): 
\begin{figure}[htb]
\bigfigure{ 
\begin{center}\begin{tikzpicture}
\node[dot] (u1) at (0,1.5) {};
\node[dot] (u2) at (1.5,1.5) {};
\node[dot] (u2b) at (3,1.5) {};
\node[dot] (u3) at (4.5,1.5) {};
\node[dot] (u4) at (6,1.5) {};
\node[dot] (u5) at (7.5,1.5) {};
\node[dot] (d2) at (1.5,0) {};
\node[dot] (d3) at (3,0) {};
\node[dot] (d3b) at (4.5,0) {};
\node[dot] (d4) at (6,0) {};
\node[dot] (d5) at (7.5,0) {};
\draw[-Stealth] (u1) to node[above] {$x_3$} (u2); 
\draw[-Stealth] (u2) to node[above] {$\Delta^k$} (u2b);
\draw[-Stealth] (u2b) to node[above] {$x_1$} (u3);
\draw[-Stealth] (u3) to node[above] {$x_2$} (u4);
\draw[-Stealth] (u4) to node[above] {$x_3$} (u5);
\draw[-Stealth] (u2) to node {$c_0$\phantom{xx}} (d2); 
\draw[-Stealth] (u3) to node {$c_1$\phantom{xx}} (d3b);
\draw[-Stealth] (u4) to node {$c_2$\phantom{xx}} (d4);
\draw[-Stealth] (u5) to node {\phantom{w}$c_3=c$} (d5);
\draw[-Stealth] (u1) to node[near start] {\tiny \phantom{wwi}$d_0$} (d2);
\draw[-Stealth] (u2) to node[near start] {\tiny \phantom{wwi}\tiny $\tau^{-k}(d_1)$} (d3);
\draw[-Stealth] (u2b) to node[near start] {\tiny \phantom{wwi}\tiny $d_1$} (d3b);
\draw[-Stealth] (u3) to node[near start] {\tiny \phantom{wwi}\tiny $d_2$} (d4);
\draw[-Stealth] (u4) to node[near start] {\tiny \phantom{wwi}\tiny $d_3$} (d5);
\draw[-Stealth] (d2) to node[below] {$\tau^{-k}(y_1)$} (d3);
\draw[-Stealth] (d3) to node[below] {$\Delta^k$} (d3b);
\draw[-Stealth] (d3b) to node[below] {$y_2$} (d4);
\draw[-Stealth] (d4) to node[below] {$y_3$} (d5);
%
%
\draw[color=blue,thick] (1.2,1.5) arc [start angle=180, end angle=90, radius=0.3];
\draw[color=blue,thick] (3.3,1.5) arc [start angle=0, end angle=90, radius=0.3];
\draw[color=blue,thick] (4.8,1.5) arc [start angle=0, end angle=180, radius=0.3];
\draw[color=blue,thick] (6.3,1.5) arc [start angle=0, end angle=180, radius=0.3];
\draw[color=blue,thick] (1.8,0) arc [start angle=0, end angle=135, radius=0.3];
\draw[color=blue,thick] (4.8,0) arc [start angle=0, end angle=135, radius=0.3];
\draw[color=blue,thick] (6.3,0) arc [start angle=0, end angle=135, radius=0.3];
\draw[densely dotted,color=blue,thick] (2.7,0) arc [start angle=180, end angle=270, radius=0.3];
\draw[densely dotted,color=blue,thick] (4.8,0) arc [start angle=360, end angle=270, radius=0.3];
\draw[densely dotted,color=blue,thick] (5.7,0) arc [start angle=180, end angle=360, radius=0.3];
\draw[-Stealth,dashed] (1.55,2.1) to node[above] {$x$} (7.45,2.1);
\draw[-] (1.5,2.05) to (1.5,2.15);
\draw[-] (7.5,2.05) to (7.5,2.15);
\end{tikzpicture}\end{center} 
}
\caption{A domino diagram showing the calculation of the normal form of $c^{-1}x\,c$, in the case $\inf(x)\neq 0$. Again, the word $\Delta^k y_1\,y_2\,y_3$ obtained by this calculation is in normal form. Also, under the hypothesis that both $x$ and $c^{-1}x\, c$ are rigid, $d_0=d_3$ and $c_0=c_3=c$, meaning that $\Delta^k y_1\, y_2\,  y_3=c^{-1} x\, c$.}
\label{F:DominoDiagramInfNeq0}\end{figure}
in the very last step, we do \emph{not} determine the letters $d_0$ and $y_1$ by calculating the normal form $d_0|y_1$ of $x_\ell\cdot d_1$; instead, we determine two letters, which we call $d_0$ and $\tau^{-k}(y_1)$, by calculating the normal form $d_0|\tau^{-k}(y_1)$ of $x_\ell\cdot \tau^{-k}(d_1)$. As previously, we let $c_0 = x_\ell^{-1} d_0$. By the same arguments as above, we have $d_0=d_\ell$ and $c_0=c_\ell=c$. Thus, $c^{-1}xc=\tau^{-k}(y_1)\,\Delta^k\, y_2\ldots y_\ell$. The latter word is almost in normal form: in order to obtain its normal form, it suffices to slide $\Delta^k$ to the start, using the rule $\tau^{-k}(y_1)\Delta^k=\Delta^k\, y_1$. Finally, the normal form of $c^{-1} x c$ is $\Delta^k\,y_1|\ldots|y_\ell$.


\subsection{The structure of the conjugacy graph}

\begin{lemma}\label{L:PowersAndConjGraphs}
Let $G$ be a Garside group. Suppose $x\in G$ is rigid, and $n,N$ are two integers with $n|N$. Denoting $d=\frac{N}{n}$, we have:
\begin{enumerate}
\item The function $\pi^d\co SC(x^n)\longrightarrow SC(x^N)$, $y\mapsto y^d$ is an injection. In particular, $|SC(x^n)|\leqslant |SC(x^N)|$.
\item The injection $\pi^d$ sends each orbit under cycling and~$\tau$ of $SC(x^n)$ bijectively to an orbit under cycling and~$\tau$ of $SC(x^N)$.
\item The function $\pi^d$ gives rise to an inclusion of the conjugacy graph of~$x^n$ in the conjugacy graph of~$x^N$.
\item Suppose that the natural injection of centralizers $C(x^n) \hookrightarrow C(x^N)$ is also surjective. Then the image of the inclusion mentioned in~(3) of the conjugacy graph of~$x^n$ in the conjugacy graph of~$x^N$ is an induced subgraph.
\end{enumerate} 
\end{lemma}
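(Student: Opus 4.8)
The plan is to reduce everything to two structural facts about powers of a rigid element, and then to isolate the genuine content in part~(4).

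First I would record how the normal form behaves under $\pi^d$. If $y\in SC(x^n)$ has normal form $\Delta^p y_1|\cdots|y_r$, then rigidity gives the normal form of $y^d$ explicitly as $\Delta^{dp}\,\tau^{(d-1)p}(y_1)|\cdots|\tau^{(d-1)p}(y_r)|\cdots|y_1|\cdots|y_r$, so that $\inf(y^d)=dp$, $\ell_{can}(y^d)=dr$, the last $r$ factors of $y^d$ are exactly $y_1,\ldots,y_r$, and, crucially, $\iota(y^d)=\tau^{-dp}(\tau^{(d-1)p}(y_1))=\iota(y)$ and $\phi(y^d)=y_r=\phi(y)$. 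Since conjugation commutes with taking $d$-th powers and $\iota(y^d)=\iota(y)$, the map $\pi^d$ intertwines cycling with cycling and $\tau$ with $\tau$; that is, $\pi^d$ is equivariant for the (cycling$,\tau$)-action (applied inductively, using that cycling preserves rigidity).

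With these facts, parts~(1)--(3) are essentially formal. For~(1), $y^d$ is a rigid conjugate of $x^N$, hence lies in $SC(x^N)$, so $\pi^d$ is well defined; injectivity does \emph{not} require uniqueness of roots, because from the normal form of $y^d$ one reads off $p=\inf(y^d)/d$, $r=\ell_{can}(y^d)/d$ and the factors $y_1,\ldots,y_r$ as the final $r$ simple factors, thereby reconstructing $y$. For~(2), equivariance sends the orbit of $y$ into the orbit of $y^d$, and since every element of the orbit of $y^d$ has the form $(\mathrm{cyc}^a\tau^b(y))^d$, the image is exactly that orbit; combined with the injectivity from~(1) this is a bijection of orbits, hence an injection on vertices. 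For~(3), a black (resp.\ gray) arrow $[y]\to[z]$ is witnessed by a prefix $c\preccurlyeq\iota(y)$ (resp.\ $c\preccurlyeq\partial\phi(y)$) with $c^{-1}yc$ in the orbit of $z$; the same $c$ satisfies $c\preccurlyeq\iota(y^d)$ (resp.\ $c\preccurlyeq\partial\phi(y^d)$) because $\iota$ and $\phi$ are unchanged, and $c^{-1}y^dc=(c^{-1}yc)^d$ lies in the orbit of $z^d$ by~(2), giving a same-coloured arrow $[y^d]\to[z^d]$. Distinct conjugators give distinct arrows, so $\pi^d$ embeds the conjugacy graph of $x^n$ into that of $x^N$.

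The heart of the lemma is part~(4), and this is where the centralizer hypothesis does the work; I would prove the converse of the arrow statement in~(3). Suppose $[y^d]$ and $[z^d]$ are image vertices (so $y,z\in SC(x^n)$) and there is, say, a black arrow $[y^d]\to[z^d]$ in the graph of $x^N$, witnessed by $c\preccurlyeq\iota(y^d)=\iota(y)$ with $c^{-1}y^dc$ in the orbit of $z^d$. Writing that orbit element as $w^d$ with $w=\mathrm{cyc}^a\tau^b(z)\in SC(x^n)$, and setting $u=c^{-1}yc$ and $v=w$, we obtain $u^d=v^d$ with both $u$ and $v$ conjugate to $x^n$. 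Writing $u=a^{-1}x^na$ and $v=b^{-1}x^nb$, the equality $u^d=v^d$ becomes $a^{-1}x^Na=b^{-1}x^Nb$, so $ab^{-1}\in C(x^N)$; the surjectivity hypothesis $C(x^n)=C(x^N)$ then gives $ab^{-1}\in C(x^n)$, whence $u=v$, i.e.\ $c^{-1}yc=w$ lies in the orbit of $z$, producing a black arrow $[y]\to[z]$ in the graph of $x^n$. The gray case is identical with $\partial\phi$ in place of $\iota$. Since the arrows $[y^d]\to[z^d]$ and $[y]\to[z]$ are then witnessed by the same set of conjugators, the multiplicities match and the embedded subgraph is induced. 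The only real obstacle is precisely the cancellation $u^d=v^d\Rightarrow u=v$: one does not have unique roots in general, and the centralizer surjectivity is exactly the substitute that identifies the two $d$-th roots, each conjugate to $x^n$. Everything else follows from the transparency of the normal form of a power of a rigid element, in particular the invariance of $\iota$ and $\phi$ under $\pi^d$.
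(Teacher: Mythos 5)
Your proposal is correct and follows essentially the same route as the paper: parts (1)--(3) via the explicit normal form of a power of a rigid element (reconstructing $y$ from $y^d$, equivariance under cycling and $\tau$, and invariance of $\iota$ and $\phi$), and part (4) by using the centralizer hypothesis to cancel $d$-th powers. The only cosmetic difference is in (4), where the paper exhibits an element $\tilde c\,c^{-1}$ of $C(y^d)=C(y)$ using a conjugator $\tilde c$ between $y$ and $z$, while you transport everything back to $x^n$ and $x^N$ and use $C(x^n)=C(x^N)$ directly -- the two cancellations are equivalent.
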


\begin{proof} 
(1) If $y$ is a rigid element conjugate to $x^n$, then $y^d$ is a rigid conjugate of $x^N$, so $\pi^d$ is well-defined. Next, we claim that the normal form of~$y$ can be reconstructed from the normal form of~$y^d$ (which implies injectivity). Indeed, since $y$ is rigid, $\inf(y)=\frac{\inf(y^d)}{d}$, and the non-$\Delta$ factors of the normal form of~$y$ are the same as the last $\frac{\ell_{can}(y^d)}{d}$ factors of the normal form of~$y^d$.

(2) is a consequence of the fact that $\pi^d$~commutes with cycling and with~$\tau$. 

(3) Before proving~(3), we warn the reader that minimality of arrows may not be preserved by~$\pi^d$, i.e.it can happen that a minimal edge in the conjugacy graph of~$x^n$ is split into two minimal edges by~$\pi^d$.

From point (2) we know that the function~$\pi^d$ induces a well-defined map from the vertices of the conjugacy graph of~$x^n$ to the conjugacy graph of~$x^N$. Let us now think about the edges.

Suppose in the conjugacy graph of~$x^n$ we have a black edge from a vertex represented by an element $y$ of~$SC(x^n)$ to a vertex represented by $z=c^{-1}yc\in SC(x^n)$, where $c\preccurlyeq \iota(y)$. Then $c^{-1}y^dc=z^d$, and also $c\preccurlyeq \iota(y^d)$, because (by rigidity of~$y$) we have $\iota(y^d)=\iota(y)$. This means that in the conjugacy graph of~$x^N$, there is still a black edge between the vertices represented by $y^d$ and~$z^d$. The same argument works for gray edges. In summary, $\pi^d$ induces an inclusion of the conjugacy graph of $x^n$ in the conjugacy graph of~$x^N$.

(4) Suppose that $y,z\in SC(x^n)$ represent distinct vertices of the conjugacy graph of~$x^n$, and that in the conjugacy graph of~$x^N$ there is a black edge connecting the vertices represented by $y^d$ and~$z^d$. Let $c$ be the conjugating element: $c\preccurlyeq \iota(y^d)$. We have to prove that there is also a black arrow in the conjugacy graph of $x^n$ from $y$ to~$z$ with conjugating element~$c$. 
For that, we observe that $c\preccurlyeq \iota(y)=\iota(y^d)$, by the rigidity of~$y$; moreover, since $y$ and~$z$ are both contained in $SC(x^n)$, there exists an element $\tilde c$ which conjugates: $\tilde c^{-1} y \tilde c = z$. By taking $d$th powers, we obtain $\tilde c^{-1} y^d \tilde c = z^d$. 
Together with the fact that $c^{-1} y^d c = z^d$ we obtain $\tilde c c^{-1} y^d c \tilde c^{-1} = y^d$. Since the centralizers of $y^d$ and of~$y$ coincide, we deduce that $\tilde c c^{-1} y c \tilde 
c^{-1} = y$, and conclude that $c^{-1} y c = \tilde c^{-1} y \tilde c = z$.
\end{proof}

\begin{remark}
Lemma~\ref{L:PowersAndConjGraphs}(4) will not actually be used in the rest of the paper. It is, however, quite a powerful statement: for instance, in the context of braid groups, the hypothesis $C(x^n)=C(x^N)$ is verified for all pseudo-Anosov braids. This is an immediate consequence of the fact (proved in \cite{GMRootUnique} that pseudo-Anosov braids have unique roots: if $y^{-1} x^l y = x^l$, then $(y^{-1}xy)^l=x^l$, and by the uniqueness of roots $y^{-1}x^ky=(y^{-1}xy)^k=x^k$. (The fact that $C(x^k)=C(x^l)$ for a pseudo-Anosov braid~$x$ can also be deduced from the results of~\cite{GMWiCentralizer}.)
\end{remark}


\subsection{Sliding circuits set and super summit set}

We recall that for any element~$x$ of~$G$ there is another well-known characteristic subset of the conjugacy class of~$x$, called the \emph{super summit set} $SSS(x)$ which satisfies $SC(x)\subseteq SSS(x)$. We recall the definition from \cite{ElRifaiMorton}: an element $y$ of~$G$ belongs to $SSS(x)$ if it is conjugate to~$x$, if $\inf(y)$ is as large as possible among conjugates of~$x$, \emph{and} if $\sup(y)$ is as small as possible among conjugates of~$x$. It is a non-obvious fact that this subset is always non-empty, and that it coincides with the set of conjugates $y$ of~$x$ whose canonical length~$\ell_{can}(y)$ is as small as possible among conjugates of~$x$. 

\begin{proposition}\label{P:PowerOfSSSNotRigid}
Suppose that $x$ is a rigid element of~$G$, but that its conjugate $y\in SSS(x)$ is not rigid. Then no positive power $y^n$ is rigid.
\end{proposition}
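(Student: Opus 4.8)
The plan is to prove the contrapositive: assuming that $x$ is rigid, that $y\in SSS(x)$, and that some positive power $y^n$ is rigid, I will deduce that $y$ itself is rigid. Throughout I will use the reformulation of rigidity recorded above together with the definition of left-weightedness: an element $z$ which is not a power of $\Delta$ is rigid if and only if $\phi(z)|\iota(z)$ is left-weighted, that is, if and only if $\iota(z)\wedge\partial\phi(z)=1$.

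The first step is to show that $y$ is \emph{stable}, by which I mean that $\inf(y^k)=k\inf(y)$ and $\sup(y^k)=k\sup(y)$ for every $k\geqslant 1$; this is the only place where the rigidity of $x$ is used. Since $x$ is rigid, every $x^k$ is rigid, so $\inf(x^k)=k\inf(x)$, $\sup(x^k)=k\sup(x)$, and $x^k\in SSS(x^k)$. As $y\in SSS(x)$ we have $\inf(y)=\inf(x)$ and $\sup(y)=\sup(x)$. Now $y^k$ is conjugate to $x^k$, so on the one hand $\inf(y^k)\geqslant k\inf(y)=\inf(x^k)$, while on the other hand $\inf(x^k)$ is the maximal infimum in the conjugacy class of $x^k$ (because $x^k$ lies in its own super summit set); hence $\inf(y^k)=k\inf(y)$. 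The dual argument with suprema gives $\sup(y^k)=k\sup(y)$, and therefore $\ell_{can}(y^k)=k\,\ell_{can}(y)$ for all $k$.

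The heart of the matter is then the following \emph{containment lemma}: for a stable element $y$ one has $\iota(y^k)=\iota(y)$ and $\phi(y^k)=\phi(y)$ for every $k$. Granting this, the proof finishes at once: rigidity of $y^n$ gives $\iota(y^n)\wedge\partial\phi(y^n)=1$, and substituting $\iota(y^n)=\iota(y)$ and $\phi(y^n)=\phi(y)$ yields $\iota(y)\wedge\partial\phi(y)=1$, so $y$ is rigid, contrary to hypothesis. One inequality of the containment lemma is easy: assuming first that $\inf(y)=0$, the positivity of $y^{k-1}$ gives $y\preccurlyeq y^k$, whence $\iota(y)=y\wedge\Delta\preccurlyeq y^k\wedge\Delta=\iota(y^k)$; the mirror statement, using that $y$ right-divides $y^k$, shows that $\phi(y)$ right-divides $\phi(y^k)$.

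The hard part, which I expect to be the main obstacle, is the reverse inequality (that normalization does not \emph{enlarge} the extreme factors), and this is exactly where stability must be used. The normal form of $y^k$ is obtained from the concatenation of $k$ copies of the normal form of $y$ by left-normalizing the $k-1$ interior junctions, each of which is, up to a power of $\tau$, the single junction $\phi(y)\cdot\iota(y)$; I would analyze this normalization with the domino rule of Figures~\ref{F:DominoDiagramInf0} and~\ref{F:DominoDiagramInfNeq0}. The point is that the sliding triggered at each junction is \emph{contained}: since $\ell_{can}(y^k)=k\,\ell_{can}(y)$ no factor can collapse, and since $\inf(y^k)=k\inf(y)$ no new $\Delta$ can be produced, so the only effect of normalization is a local re-bracketing of the two factors flanking each junction. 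Concretely one checks that appending the overlap $g=\iota(y)\wedge\partial\phi(y)$ to a final factor leaves the \emph{preceding} junction left-weighted (a nontrivial common left-divisor of the enlarged factor and the relevant complement would force $\phi(y)\wedge\partial(\cdot)\neq 1$, contradicting the weightedness of the original normal form), and dually on the right; hence the disturbance never reaches the first factor of the first copy nor the last factor of the last copy, which gives $\iota(y^k)\preccurlyeq\iota(y)$ and $\phi(y^k)$ right-divides $\phi(y)$. Combined with the easy inequalities of the previous paragraph this pins $\iota(y^k)$ and $\phi(y^k)$ to their values at $k=1$. Finally I would verify that the value $p=\inf(y)$ is irrelevant: the twists $\tau^{p}$ occurring when $p\neq 0$ permute the simple elements and preserve left-weightedness, meets and complements, so they affect none of the computations above, and since $\iota$ and $\phi$ already incorporate the twist $\tau^{-\inf}$, the statement and its proof carry over verbatim.
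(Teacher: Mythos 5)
Your opening step (stability: $\inf(y^k)=k\inf(y)$ and $\sup(y^k)=k\sup(y)$, using $y\in SSS(x)$ and the rigidity of $x$) is correct and is exactly how the paper begins. But the proof then hinges on your ``containment lemma'' in the strong form of \emph{equalities} $\iota(y^k)=\iota(y)$ and $\phi(y^k)=\phi(y)$, and the argument you give for the reverse inclusions (your ``hard part'') has a genuine gap. The claim that the normalization triggered at each junction is \emph{contained} --- that appending the overlap $g$ to the factor preceding a junction leaves the next junction to the left still left-weighted --- is false as a general principle: left-weightedness of $a|b$ does not imply left-weightedness of $a|bc$, even when $bc$ is simple. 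For instance in $\calB_4$, take $a=\Delta\sigma_2^{-1}$ (so $\partial a=\sigma_2$), $b=\sigma_1$, $c=\sigma_2\sigma_1$: then $b\wedge\partial a=1$, so $a|b$ is left-weighted, but $bc=\sigma_1\sigma_2\sigma_1=\sigma_2\sigma_1\sigma_2$ has $\sigma_2$ as a prefix, so $bc\wedge\partial a\neq 1$. Your parenthetical justification fails precisely here: a nontrivial common prefix of the \emph{enlarged} factor and the relevant complement need not restrict to a nontrivial common prefix of the \emph{original} factor, so no contradiction with the original left-weightedness arises. The stability conditions you invoke (no factor collapses, no new $\Delta$ appears) control only the \emph{number} of factors, not how far the re-bracketing propagates; indeed the paper's own computation explicitly allows $\iota(y^{n+1})=\iota(y^n)\cdot\tau^{-p(n+1)}(t)$ with $t$ nontrivial, i.e.\ the disturbance may well reach the first factor. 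A secondary flaw: your ``easy'' claim that $\phi(y)$ right-divides $\phi(y^k)$ is obtained by mirroring $\iota(z)=z\wedge\Delta$, but the final factor of the \emph{left} normal form is not the maximal simple right-divisor (e.g.\ $\sigma_1\sigma_3|\sigma_1$ is in left normal form in $\calB_4$, yet $\sigma_1\sigma_3$ is a simple right-divisor of $\sigma_1\sigma_1\sigma_3$), so that mirror argument does not go through either.

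The good news is that the equalities are not needed, and the paper proves only the two monotonicity statements: $\iota(y)\preccurlyeq\iota(y^n)$ (which your lattice argument $y\wedge\Delta\preccurlyeq y^n\wedge\Delta$ does establish cleanly when $\inf(y)=0$) and ``$\phi(y^n)$ right-divides $\phi(y)$'' (which the paper gets from the domino/normal-form analysis, and which is the direction you relegated to the flawed ``hard part''). These two alone finish the proof: writing $\phi(y)=u\,\phi(y^n)$ gives $\partial\phi(y)\preccurlyeq\partial\phi(y^n)$, hence $\iota(y)\wedge\partial\phi(y)\preccurlyeq\iota(y^n)\wedge\partial\phi(y^n)=1$ whenever $y^n$ is rigid --- your contrapositive --- or, in the paper's phrasing, any nontrivial $i\preccurlyeq\iota(y)$ with $\phi(y)\,i$ simple remains a prefix of $\iota(y^n)$ with $\phi(y^n)\,i$ simple, so non-rigidity passes to all powers. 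To repair your write-up, drop the reverse inclusions entirely and supply a correct proof of the single statement that $\phi(y^n)$ is a suffix of $\phi(y)$ (the leftward-moving cascade of the left normal form can only remove a prefix from the last factor, never add to it).
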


In other words, if $x$ is rigid and $y\in SSS(x)\setminus SC(x)$, then no power $y^n$ belongs to $SC(x^n)$. Yet another way of saying this is:
for elements possessing a rigid power \emph{and} a rigid conjugate, the property of being in its own $SSS$ is actually equivalent to the (a priori much stronger) condition of being in its own~$SC$. 

\begin{proof}
For every positive integer~$n$, the element $x^n$ is rigid. Therefore $\ell_{can}(x^n)=n\cdot \ell_{can}(x)$, and no conjugate of $x^n$ has a smaller canonical length than that. In particular, $\ell_{can}(y^n)\geqslant n\cdot \ell_{can}(x)$.
On the other hand, $y\in SSS(y)=SSS(x)$, so $\ell_{can}(y)=\ell_{can}(x)$. This implies that $\ell_{can}(y^n)\leqslant n\cdot \ell_{can}(y)=n\cdot \ell_{can}(x)$. We have proven that $\ell_{can}(y^n)=n\cdot \ell_{can}(x)$ for every positive integer~$n$. Since this is the smallest canonical length in the conjugacy class of $y^n$, it also follows that $\inf(y^n)=n\inf(y)=n\inf(x)$ for every integer $n$.

This implies that the sequence of initial factors $\iota(y), \iota(y^2),\ldots$ is increasing, in the sense that each is a prefix of the next: $\iota(y)\preccurlyeq \iota(y^2)\preccurlyeq \iota(y^3) \preccurlyeq\ldots$. Indeed, if $p=\inf(x)$ and $r=\ell_{can}(x)$, the left normal form of $y^n$, for any given $n>0$, is as follows: $\Delta^{pn}s_1|\cdots|s_{rn}$. When we multiply it by another copy of $y$, we obtain 
$$
y^{n+1}=\Delta^{pn}s_1\cdots s_{rn} \Delta^p y_1\cdots y_r = \Delta^{p(n+1)} \tau^p(s_1)\cdots \tau^p(s_{rn})y_1\cdots y_r.
$$
Since the canonical length of $y^{n+1}$ equals $r(n+1)$, it follows that no factor $\Delta$ is created when computing the normal form of $\tau^p(s_1)\cdots \tau^p(s_{rn})y_1\cdots y_r$. So the first factor in the normal form of this element is $\tau^p(s_1)t$ for some simple element $t$. Therefore, the initial factor of $y^{n+1}$ is 
$$
\iota(y^{n+1})=\tau^{-p(n+1)}(\tau^p(s_1)t)= \tau^{-pn}(s_1)\tau^{-p(n+1)}(t) = \iota(y^n)\tau^{-p(n+1)}(t).
$$
Since $\tau^{-p(n+1)}(t)$ is simple, we obtain $\iota(y^n)\preccurlyeq \iota(y^{n+1})$, as claimed.
(This argument follows the proof of Lemma 3.28 in~\cite{Birman-Gebhardt-GM1}).

In particular, $\iota(y)\preccurlyeq \iota(y^n)$. By a similar argument, $\phi(y)\succcurlyeq \phi(y^n)$.

Now by hypothesis, $y$~is not rigid, i.e.the product $\phi(y)\cdot \iota(y)$ is not left-weighted; this means that $\iota(y)$ has a non-empty prefix~$i$ such that $\phi(y)\cdot i$ is simple. \emph{A fortiori}, the product $\phi(y^n)\cdot \iota(y^n)$ is not left-weighted, either (as the same element $i$ is still a prefix of $\iota(y^n)$ and $\phi(y^n)\cdot i$ is still simple). This means that $y^n$ is not rigid.
\end{proof}

\begin{example} We return to Example~\ref{E:SimpleExInB4classical}. There, the rigid braid $x=21|12|2132$, with $\inf(x)=0$ and $\sup(x)=3$, has a conjugate $y=1^{-1}x1=\Delta^{-1} 1 2 1 3 2 | 2 1 3 2 1 | 2 3 | 3 2$ which is not rigid but whose square is rigid. As predicted by Lemma~\ref{P:PowerOfSSSNotRigid}, $y$ does not even belong to $SSS(x)$, as witnessed by the fact that $\inf(y)=-1$.
\end{example}


\section{More examples}\label{S:Examples}

In this section we present some examples which we find enlightening. All calculations were performed with the computer programs Cbraid \cite{GMcbraid} (for~$\calB_m$) and GAP3 \cite{GAP3, Chevie} (for~$\calB_m^*$). Whenever we say that the sequence $|SC(x^n)|_{n\in\mathbb{N}}$ ``appears to be'' periodic of some period, we mean that this is what our (necessarily finite) calculations indicate.

\begin{example}
In~$\calB_5$, if we set $x=213243|34|432$, then the sequence $(|SC(x^n)|)_{n\in\mathbb N}$ appears to be periodic of period~3, with $|SC(x)|=|SC(x^2)|=6$ and $|SC(x^3)|=42$.
The conjugacy graph of~$x^3$ consists of three vertices which are connected by three gray arrows in a cyclic manner.)
\end{example}

\begin{example}
In~$\calB_6$, if we set $x=243215432|24$, then the sequence $(|SC(x^n)|)_{n\in\mathbb N}$ appears to be periodic of period~6, and $|SC(x)|=4$, $|SC(x^2)|=12$, $|SC(x^3)|=28$, $|SC(x^4)|=12$, $|SC(x^5)|=4$ and $|SC(x^6)|=84$.  
\end{example}

\begin{example}\label{E:TheBigOne}
The following example might help for understanding the general case. We consider the 8-strand braid group, equipped with its classical Garside structure, and the element
$$
x = 246|24654321765432 \in \calB_8
$$
Calculations with the computer program \cite{GMcbraid} indicate that this element~$x$ is rigid and pseudo-Anosov, with $\inf(x)=0$ and $\sup(x)=2$. Moreover, the sequence $(|SC(x^n)|)_{n\in\mathbb N}$ appears to be periodic of period~12, with
\begin{center}
\begin{tabular}{r||c|c|c|c|c|c|c|c|c|c|c|c|}
$n$ & 1 & 2 & 3 & 4 & 5 & 6 & 7 & 8 & 9 & 10 & 11 & 12 \\
\hline
\phantom{$\frac{1^1}{1}$}$|SC(x^n)|$ & 4 & 12 & 40 & 76 & 4 & 120 & 4 & 76 & 40 & 12 & 4 & 760 \\
\end{tabular}
\end{center}

One possible interpretation is that the 8-strand braid~$x$ has some kind of symmetry which only reveals itself in the twelfth power~$x^{12}$. Making this intuition more precise might be a great step towards proving Conjecture~\ref{C:MainConjecture}.

Figure~\ref{F:TheBigOne} shows the conjugacy graph of $x^{12}$. This graph nicely illustrates the fact that the possible values of $r(y)$ among conjugates $y$ of~$x$ are 1,2,3,4,6 and 12. The red dot is present in the conjugacy graph of $x^n$ for every~$n$, where it is represented by the braid~$x^n$. The orange dot is present in the conjugacy graph of $x^n$ if $n$ is even: it is represented by $y^2$ for some non-rigid conjugate $y$ of~$x$ with $r(y)=2$ -- we say the element $y^2$ is at \emph{level 2}. The yellow dots are in the graph of $x^n$ if $3|n$ (they are represented by level 3 elements), the green dots if $4|n$, the blue dots if $6|n$, and the white dots if $12|n$.

\begin{figure}[htb]
\bigfigure{
\definecolor{myblue}{rgb}{0,0.2,1}
\begin{center}\begin{tikzpicture}
\node[bigdot,label={right:Level \ 1},fill=red] (101) at (-7.7,3.1) {}; 
\node[bigdot,label={right:Level \ 2},fill=orange] (102) at (-7.7,3.6) {}; %
\node[bigdot,label={right:Level \ 3},fill=yellow] (103) at (-7.7,4.1) {}; %
\node[bigdot,label={right:Level \ 4},fill=green] (104) at (-7.7,4.6) {}; %
\node[bigdot,label={right:Level \ 6},fill=myblue] (106) at (-7.7,5.1) {}; %
\node[bigdot,label={right:Level 12},fill=white] (112) at (-7.7,5.6) {}; %
\node[bigdot,white] (99) at (7.5,-5.6) {}; 
\node[bigdot,label={\verlab{1}},fill=red] (1) at (3.5,0) {}; %
\node[bigdot,label={\verlab{11}},fill=orange] (11) at (-3.5,0) {}; %
\node[bigdot,label={\verlab{23}},fill=yellow] (23) at (0.5,0) {}; %
\node[bigdot,label={\verlab{13}},fill=yellow] (13) at (1.5,0) {}; %
\node[bigdot,label={\verlab{5}},fill=yellow] (5) at (2.5,0) {}; %
\node[bigdot,label={\verlab{9}},fill=green] (9) at (7.5,0) {}; %
\node[bigdot,label={\verlab{21}},fill=green] (21) at (0,-5.6) {}; %
\node[bigdot,label={\verlab{33}},fill=green] (33) at (-7.5,0) {}; %
\node[bigdot,label={\verlab{3}},fill=green] (3) at (0,5.6) {}; %
\node[bigdot,label={\verlab{19}},fill=myblue] (19) at (-2.5,0) {}; %
\node[bigdot,label={\verlab{31}},fill=myblue] (31) at (-1.5,0) {}; %
\node[bigdot,label={\verlab{41}},fill=myblue] (41) at (-0.5,0) {}; %
\node[bigdot,label={\verlab{7}},fill=white] (7) at (0,4.6) {}; %
\node[bigdot,label={\verlab{17}},fill=white] (17) at (0,3.6) {}; %
\node[bigdot,label={\verlab{29}},fill=white] (29) at (0,2.6) {}; %
\node[bigdot,label={\verlab{15}},fill=white] (15) at (6.5,0) {}; %
\node[bigdot,label={\verlab{25}},fill=white] (25) at (5.5,0) {}; %
\node[bigdot,label={\verlab{37}},fill=white] (37) at (4.5,0) {}; %
\node[bigdot,label={\verlab{27}},fill=white] (27) at (0,-4.6) {}; %
\node[bigdot,label={\verlab{35}},fill=white] (35) at (0,-3.6) {}; %
\node[bigdot,label={\verlab{45}},fill=white] (45) at (0,-2.6) {}; %
\node[bigdot,label={\verlab{47}},fill=white] (47) at (-4.5,0) {}; %
\node[bigdot,label={\verlab{43}},fill=white] (43) at (-5.5,0) {}; %
\node[bigdot,label={\verlab{39}},fill=white] (39) at (-6.5,0) {}; %
\draw[-Stealth,thick,black] (1) to node[near start] {\tiny{$\times\!3$}\phantom{.}} (5);
\draw[-Stealth,thick,gray] (1) to[bend right=10] node[near start] {\tiny{$\times 4$}} (3);
\draw[-Stealth,thick,black] (13) to[bend left=40] node[near end] {\tiny{\arrlab{$\times 1$}}} (1);
\draw[-Stealth,thick,gray] (21)[bend right=10] to[near end] node {\tiny{\arrlab{$\times 1$}}} (1);
\draw[-Stealth,thick,black] (11) to node[near start] {\tiny{$\times\!3$}} (19);
\draw[-Stealth,thick,gray] (11) to[bend right=10] node[near start] {\tiny{$\times 2$}} (21);
\draw[-Stealth,thick,gray] (3) to[bend right=10] node[near end] {\tiny{\arrlab{$\times 1$}}} (11);
\draw[-Stealth,thick,black] (31) to[bend left=40] node[near end] {\tiny{\arrlab{$\times 1$}}} (11);
\draw[-Stealth,thick,gray] (9)[bend left=35] to[near start] node {\tiny{$\times 2$}} (21);
\draw[-Stealth,thick,gray] (21) to[bend left=35] node[near end] {\tiny{\arrlab{$\times 1$}}} (33);
\draw[-Stealth,thick,gray] (33) to[bend left=35] node[near start] {\tiny{\arrlab{$\times 1$}}} (3);
\draw[-Stealth,thick,gray] (3) to[bend left=35] node[near end] {\tiny{$\times 2$}} (9);
\draw[-Stealth,thick,black] (17) to[bend left=40] node[near end] {\tiny{\arrlab{$\times 1$}}} (3);
\draw[-Stealth,thick,black] (3) to node[near start] {\tiny{$\stackrel{\phantom{x}}{\times\!3}$}} (7);
\draw[-Stealth,thick,black] (9) to node[near start] {\tiny{$\times\!3$\phantom{.}}} (15);
\draw[-Stealth,thick,black] (25) to[bend right=40] node[near end] {\tiny{\arrlab{$\times 1$}}} (9);
\draw[-Stealth,thick,black] (35) to[bend left=40] node[near end] {\tiny{\arrlab{$\times 1$}}} (21);
\draw[-Stealth,thick,black] (21) to node[near start] {\tiny{$\stackrel{\times\!3}{\phantom{x}}$}} (27);
\draw[-Stealth,thick,black] (43) to[bend right=40] node[near end] {\tiny{\arrlab{$\times 1$}}} (33);
\draw[-Stealth,thick,black] (33) to node[near start] {\tiny{$\times\!3$}} (39);
\draw[-Stealth,thick,gray] (7) to[bend right=8] node[near end] {\tiny{\arrlab{$\times 1$}}} (19);
\draw[-Stealth,thick,gray] (19) to[bend right=8] node[near start] {\tiny{$\times 2$}} (27);
\draw[-Stealth,thick,gray] (27) to[bend right=8] node[near end] {\tiny{\arrlab{$\times 1$}}} (5);
\draw[-Stealth,thick,gray] (5) to[bend right=8] node[near start] {\tiny{$\times 4$}} (7);
\draw[-Stealth,thick,gray] (17) to[bend right=6] node[near end] {\tiny{\arrlab{$\times 1$}}} (31);
\draw[-Stealth,thick,gray] (31) to[bend right=6] node[near start] {\tiny{$\times 2$}} (35);
\draw[-Stealth,thick,gray] (35) to[bend right=6] node[near end] {\tiny{\arrlab{$\times 1$}}} (13);
\draw[-Stealth,thick,gray] (13) to[bend right=6] node[near start] {\tiny{$\times 4$}} (17);
\draw[-Stealth,thick,gray] (29) to[bend right=4] node[near end] {\tiny{\arrlab{$\times 1$}}} (41);
\draw[-Stealth,thick,gray] (41) to[bend right=4] node[near start] {\tiny{$\times 2$}} (45);
\draw[-Stealth,thick,gray] (45) to[bend right=4] node[near end] {\tiny{\arrlab{$\times 1$}}} (23);
\draw[-Stealth,thick,gray] (23) to[bend right=4] node[near start] {\tiny{$\times 4$}} (29);
\draw[-Stealth,thick,black] (5) to node[near start] {\tiny{$\times\!2$\phantom{.}}} (13);
\draw[-Stealth,thick,black] (13) to node[near start] {\tiny{\arrlab{$\times\!1$}}} (23);
\draw[-Stealth,thick,black] (23) to[bend left=40] node[near start] {\tiny{\arrlab{$\times 1$}}} (5);
\draw[-Stealth,thick,black] (19) to node[near start] {\tiny{$\times\!2$}} (31);
\draw[-Stealth,thick,black] (31) to node[near start] {\tiny{\arrlab{$\times\!1$}}} (41);
\draw[-Stealth,thick,black] (41) to[bend left=40] node[near start] {\tiny{\arrlab{$\times 1$}}} (19);
\draw[-Stealth,thick,gray] (7) to[bend left=33] node[near end] {\tiny{$\times 2$}} (15);
\draw[-Stealth,thick,gray] (15) to[bend left=33] node[near start] {\tiny{$\times 2$}} (27);
\draw[-Stealth,thick,gray] (27) to[bend left=33] node[near end] {\tiny{\arrlab{$\times 1$}}} (39);
\draw[-Stealth,thick,gray] (39) to[bend left=33] node[near start] {\tiny{\arrlab{$\times 1$}}} (7);
\draw[-Stealth,thick,gray] (17) to[bend left=31] node[near end] {\tiny{$\times 2$}} (25);
\draw[-Stealth,thick,gray] (25) to[bend left=31] node[near start] {\tiny{$\times 2$}} (35);
\draw[-Stealth,thick,gray] (35) to[bend left=31] node[near end] {\tiny{\arrlab{$\times 1$}}} (43);
\draw[-Stealth,thick,gray] (43) to[bend left=31] node[near start] {\tiny{\arrlab{$\times 1$}}} (17);
\draw[-Stealth,thick,gray] (29) to[bend left=29] node[near end] {\tiny{$\times 2$}} (37);
\draw[-Stealth,thick,gray] (37) to[bend left=29] node[near start] {\tiny{$\times 2$}} (45);
\draw[-Stealth,thick,gray] (45) to[bend left=29] node[near end] {\tiny{\arrlab{$\times 1$}}} (47);
\draw[-Stealth,thick,gray] (47) to[bend left=29] node[near start] {\tiny{\arrlab{$\times 1$}}} (29);
\draw[-Stealth,thick,black] (7) to node[near start] {\tiny{$\stackrel{\phantom{X}}{\times\!2}$}} (17);
\draw[-Stealth,thick,black] (17) to node[near start] {\tiny{\arrlab{$\times\!1$}}} (29);
\draw[-Stealth,thick,black] (29)[bend left=40] to node[near start] {\tiny{\arrlab{$\times 1$}}} (7);
\draw[-Stealth,thick,black] (15) to node[near start] {\tiny{$\times\!2$\phantom{.}}} (25);
\draw[-Stealth,thick,black] (25) to node[near start] {\tiny{\arrlab{$\times\!1$}}} (37);
\draw[-Stealth,thick,black] (37) to[bend right=40] node[near start] {\tiny{\arrlab{$\times 1$}}} (15);
\draw[-Stealth,thick,black] (27) to node[near start] {\tiny{$\stackrel{\times\!2}{{\phantom{x}}}$}} (35);
\draw[-Stealth,thick,black] (35) to node[near start] {\tiny{\arrlab{$\times\!1$}}} (45);
\draw[-Stealth,thick,black] (45) to[bend left=40] node[near start] {\tiny{\arrlab{$\times 1$}}} (27);
\draw[-Stealth,thick,black] (39) to node[near start] {\tiny{$\times\!2$}} (43);
\draw[-Stealth,thick,black] (43) to node[near start] {\tiny{\arrlab{$\times\!1$}}} (47);
\draw[-Stealth,thick,black] (47) to[bend right=40] node[near start] {\tiny{\arrlab{$\times 1$}}} (39);
\end{tikzpicture}\end{center} 
}
\caption{The conjugacy graph of $x^{12}$. However, for the sake of clarity, not \emph{all} black and gray arrows are shown but only the \emph{minimal} ones, i.e.~only those arrows that cannot be obtained as the composition of two or more arrows of the same color.} 
\label{F:TheBigOne}\end{figure}

Here is an example of an element~$y$ which is conjugate to~$x$, which is not rigid, but whose twelfth power is: we take $y$ to be $x$, conjugated by the braid $c=16$
$$
y = -6 \, -\hspace{-2pt}1 \, \cdot \, x \, \cdot \, 1 \, 6 =
\Delta^{-1} (\Delta \sigma_1^{-1})|246543217654321|46|6
$$
where $\Delta \sigma_1^{-1}$ is a simple element (that we did not write explicitly for the sake of clarity). 

\begin{figure}
\bigfigure{ 
\begin{tikzpicture}[scale=0.14, every node/.style={scale=0.14}]
\pic[
rotate=90,
braid/.cd,
gap=0.2, 
] {braid={s_2-s_4-s_6 s_2-s_4-s_6 s_5-s_7 s_4-s_6 s_3-s_5 s_2-s_4 s_1-s_3 s_2   s_2-s_4-s_6 s_2-s_4-s_6 s_5-s_7 s_4-s_6 s_3-s_5 s_2-s_4 s_1-s_3 s_2   s_2-s_4-s_6 s_2-s_4-s_6 s_5-s_7 s_4-s_6 s_3-s_5 s_2-s_4 s_1-s_3 s_2   s_2-s_4-s_6 s_2-s_4-s_6 s_5-s_7 s_4-s_6 s_3-s_5 s_2-s_4 s_1-s_3 s_2   s_2-s_4-s_6 s_2-s_4-s_6 s_5-s_7 s_4-s_6 s_3-s_5 s_2-s_4 s_1-s_3 s_2   s_2-s_4-s_6 s_2-s_4-s_6 s_5-s_7 s_4-s_6 s_3-s_5 s_2-s_4 s_1-s_3 s_2   s_2-s_4-s_6 s_2-s_4-s_6 s_5-s_7 s_4-s_6 s_3-s_5 s_2-s_4 s_1-s_3 s_2   s_2-s_4-s_6 s_2-s_4-s_6 s_5-s_7 s_4-s_6 s_3-s_5 s_2-s_4 s_1-s_3 s_2   s_2-s_4-s_6 s_2-s_4-s_6 s_5-s_7 s_4-s_6 s_3-s_5 s_2-s_4 s_1-s_3 s_2   s_2-s_4-s_6 s_2-s_4-s_6 s_5-s_7 s_4-s_6 s_3-s_5 s_2-s_4 s_1-s_3 s_2   s_2-s_4-s_6 s_2-s_4-s_6 s_5-s_7 s_4-s_6 s_3-s_5 s_2-s_4 s_1-s_3 s_2   s_2-s_4-s_6 s_2-s_4-s_6 s_5-s_7 s_4-s_6 s_3-s_5 s_2-s_4 s_1-s_3 s_2}};
\draw (0,-1) -- (0,8);
\draw (1.25,-1) -- (1.25,8);
\draw (8.25,-1) -- (8.25,8);
\draw (9.25,-1) -- (9.25,8);
\draw (16.25,-1) -- (16.25,8);
\draw (17.25,-1) -- (17.25,8);
\draw (24.25,-1) -- (24.25,8);
\draw (25.25,-1) -- (25.25,8);
\draw (32.25,-1) -- (32.25,8);
\draw (33.25,-1) -- (33.25,8);
\draw (40.25,-1) -- (40.25,8);
\draw (41.25,-1) -- (41.25,8);
\draw (48.25,-1) -- (48.25,8);
\draw (49.25,-1) -- (49.25,8);
\draw (56.25,-1) -- (56.25,8);
\draw (57.25,-1) -- (57.25,8);
\draw (64.25,-1) -- (64.25,8);
\draw (65.25,-1) -- (65.25,8);
\draw (72.25,-1) -- (72.25,8);
\draw (73.25,-1) -- (73.25,8);
\draw (80.25,-1) -- (80.25,8);
\draw (81.25,-1) -- (81.25,8);
\draw (88.25,-1) -- (88.25,8);
\draw (89.25,-1) -- (89.25,8);
\draw (96.4,-1) -- (96.4,8);
\end{tikzpicture} 
\bigskip

\begin{tikzpicture}[scale=0.14, every node/.style={scale=0.14}]
\pic[
rotate=90,
braid/.cd,
gap=0.2, 
] {braid={s_2 s_1-s_3 s_4 s_3-s_5 s_6 s_5-s_7 s_4-s_6 s_3-s_5 s_2-s_4 s_1-s_3 s_2   s_2 s_1-s_3 s_2-s_4 s_1-s_3-s_5 s_4-s_6 s_3-s_5-s_7   s_1-s_3-s_7   s_1-s_3 s_2-s_4-s_7 s_1-s_3 s_2   s_2 s_1-s_3 s_2-s_4 s_1-s_3-s_5-s_7   s_1 s_2-s_5-s_7   s_2 s_1-s_3-s_5-s_7 s_6 s_5 s_4 s_3 s_2   s_2 s_1-s_3 s_4 s_3-s_5 s_6 s_5-s_7   s_1-s_3-s_5   s_1-s_3-s_5 s_6-s_4 s_5-s_3 s_4-s_2 s_3-s_1 s_2   s_2 s_1-s_3 s_2-s_4 s_1-s_3-s_5 s_4-s_6 s_3-s_5-s_7   s_1-s_3 s_2-s_4 s_1-s_3-s_7 s_2   s_2 s_1-s_3 s_2-s_4 s_1-s_3-s_5-s_7 s_2   s_2 s_1-s_3-s_5-s_7   s_3-s_5-s_7   s_3-s_5-s_7 s_6 s_5 s_4 s_3 s_2   s_2 s_1-s_3 s_4 s_3-s_5 s_6 s_5-s_7   s_1-s_3-s_5 s_4-s_6 s_3-s_5 s_2-s_4 s_1-s_3 s_2   s_2 s_1-s_3 s_2-s_4 s_1-s_3-s_5 s_4-s_6 s_3-s_5-s_7 s_2-s_4 s_1-s_3 s_2   s_2 s_1-s_3 s_2-s_4 s_1-s_3-s_5-s_7   s_1-s_5-s_7   s_1 s_2-s_5-s_7   s_2 s_1-s_3-s_5-s_7   s_3-s_5-s_7 s_6 s_5 s_4 s_3 s_2}};
\draw (0,-1) -- (0,8);
\draw (11.25,-1) -- (11.25,8);
\draw (17.25,-1) -- (17.25,8);
\draw (18.25,-1) -- (18.25,8);
\draw (22.25,-1) -- (22.25,8);
\draw (26.25,-1) -- (26.25,8);
\draw (28.25,-1) -- (28.25,8);
\draw (35.25,-1) -- (35.25,8);
\draw (41.25,-1) -- (41.25,8);
\draw (42.25,-1) -- (42.25,8);
\draw (48.25,-1) -- (48.25,8);
\draw (54.25,-1) -- (54.25,8);
\draw (58.25,-1) -- (58.25,8);
\draw (63.25,-1) -- (63.25,8);
\draw (65.25,-1) -- (65.25,8);
\draw (66.25,-1) -- (66.25,8);
\draw (72.25,-1) -- (72.25,8);
\draw (78.25,-1) -- (78.25,8);
\draw (84.25,-1) -- (84.25,8);
\draw (93.25,-1) -- (93.25,8);
\draw (97.25,-1) -- (97.25,8);
\draw (98.25,-1) -- (98.25,8);
\draw (100.25,-1) -- (100.25,8);
\draw (102.25,-1) -- (102.25,8);
\draw (108.5,-1) -- (108.5,8);
\end{tikzpicture} 
}
\caption{The braids $x^{12}$ (above) and~$y$, its conjugate by $\sigma_1 \sigma_6$ (below). Thus the upper picture shows a representative of the red dot in Figure~\ref{F:TheBigOne}, and the lower one shows a representative of one of the white dots.}
\label{F:x12andconjugate}
\end{figure}

As expected from Lemma~\ref{P:PowerOfSSSNotRigid}, $y$ does not belong to its super summit set -- indeed, we have $\inf(y) = -1 = \inf(x)-1$ and $\sup(y) = 3 = \sup(x)+1$. 
Here is the list of infima and suprema of powers of $x$ and $y$:
\begin{center}
\begin{tabular}{r||c|c|c|c|c|c|c|c|c|c|c|c|}
$n$ & 1 & 2 & 3 & 4 & 5 & 6 & 7 & 8 & 9 & 10 & 11 & 12 \\
\hline\hline
$\phantom{{^{I^I}}}\inf(x^n)$ & 0 & 0 & 0 & 0 & 0 & 0 & 0 & 0 & 0 & 0 & 0 & 0 \\
\hline
$\phantom{{^{I^I}}}\inf(y^n)$ & -1 & -1 & -1 & 0 & -1 & -1 & -1 & 0 & -1 & -1 & -1 & 0 \\
\hline
$\phantom{{^{I^I}}}\sup(x^n)$ & 2 & 4 & 6 & 8 & 10 & 12 & 14 & 16 & 18 & 20 & 22 & 24 \\
\hline
$\phantom{{^{I^I}}}\sup(y^n)$ & 3 & 5 & 6 & 9 & 11 & 12 & 15 & 17 & 18 & 21 & 23 & 24 \\
\end{tabular}
\end{center}
\end{example}

\begin{remark}
We know already (see Remark~\ref{R:DiffDefnsConjGraph}) that in the conjugacy graph of $x^n$, for any rigid element~$x$ and any integer~$n$, we can get from any level~1 vertex to any maximal-level vertex by a path along gray arrows, followed by a path along black arrows (or vice versa). We conjecture that it is actually possible to get from \emph{some} level~1 vertex to \emph{some} maximal-level vertex by a path along a single gray arrow and a single black arrow (these arrows may not be minimal). We will see in Lemma~\ref{L:LimitOnPower} that this conjecture would imply an upper bound on the height of the maximal level, and thus essentially prove our main conjecture.
\end{remark}

\begin{example}
Here is a generalization of Example~\ref{E:TheBigOne}. In~$\calB_{2m}$, the element 
$$x=(2 4  6 8 \ldots 2m-4 2m-2 )^2 2m-3 2m-4 \ldots 3 21 2m-1 2m-2 2m-3 \ldots 4 32$$ 
is a rigid pseudo-Anosov braid, and it appears that the sequence $(|SC(x^n)|_{n\in\mathbb N})$ is periodic of period $m\cdot (m-1)$.
\end{example}

\begin{example} The aim of the next example is to destroy one possible idea for proving Conjecture~\ref{C:MainConjecture2}.
In the 5-strand braid group with its classical Garside structure, we consider the element $y=\Delta^{-2} 121321432|213214321|121321|232143$.
It is pseudo-Anosov, it satisfies $\inf(y)=-2$, $\sup(y)=2$, it is not rigid, but it is conjugate to the rigid braid $x=12321 . 32143$ with $\inf(x)=0$ and $\sup(x)=2$.
The powers of~$y$ are as follows:
$$\begin{array}{rcl}
y & = & \Delta^{-2}121321432|213214321|121321|232143\\
y^2 & = & \Delta^{0}12321432|2134321|12|213\\
y^3 & = & \Delta^{-2}121321432|213214321|121321|232143|12321432|2134321|12|213\\
y^4 & = & \Delta^{0}12321432|2134321|12|213|12321432|2134321|12|213\\
 & \vdots & 
\end{array}$$
The braid $y^n$ is rigid if and only if $n$ is even. We have $\inf(y^n)=-2$ if $n$~is odd, $\inf(y^n)=0$ if $n$~is even, and $\sup(y^k)=2n$.

We observe that the last factors of $y$ and $y^3$ do not coincide.
This example goes to show that the sequence of pairs $(\iota(y^n), \phi(y^n))$ need not be periodic of period $r(y)$, even when $r(y)$ is the first rigid power of~$y$, and even when the sequence $|SC(x^n)|$ is periodic of period~$r(y)$. (It is, however, conceivable that this sequence is eventually periodic, starting from its first repetition of a pair.)
\end{example} 

The final three examples will illustrate the proof of our main theorem. They take place in~$\calB_4^*$, the 4-strand braid group equipped with the dual Garside-structure of~\cite{BKL}.
Here is a quick reminder of how this structure works. The four punctures are arranged in a circular fashion around the disk, and the Garside element~$\delta$ is given by a counterclockwise cyclic movement of all four punctures in the disk by an angle of $\frac{\pi}{2}$, giving rise to a cyclic permutation of the punctures. 
The divisors of $\delta$ are in bijection with non-crossing partitions of the four punctures -- indeed, given a non-crossing partition, we get a braid by a movement of the punctures which cyclically exchanges the punctures in the same subset. Thus, the divisors of~$\delta$ in $\calB_4^*$ are the trivial element, the six atoms $\west$, $\north$, $\east$, $\south$, $\adiag$ and $\mdiag$, as well as $\se$, $\ne$, $\nw$, $\sw$, $\ns$, $\ew$, and finally $\square=\delta$. We will call $\adiag$ and $\mdiag$ the ``diagonal elements''. Here are some examples of relations between these generators: $\west\cdot\north=\nw$ (whereas the product $\north\cdot\west$ cannot be simplified and is in normal form), $\adiag\cdot\east=\se$, $\adiag\cdot\ew=\square$ $\se\cdot\west=\square$. We also recall the automorphism $\tau(x)=\delta^{-1}x\delta$, which can be interpreted as a $\frac{\pi}{2}$ counterclockwise rotation of the disk: $\tau(\south)=\east$, $\tau(\nw)=\sw$ etc.

\begin{example}\label{E:ConjugationInB4dPer2}
In $\calB_4^*$: we define $x=\mdiag\adiag\north\west\adiag$. Then the sequence $(|SC(x^n)|)_{n\in\mathbb N}$ appears to be periodic of period~2, with $|SC(x)|=7$ and $|SC(x^2)|=7\cdot 20=140$.
\begin{figure}[htb]
\bigfigure{ \begin{center}\begin{tikzpicture}
\node[bigdot,label={above:$x^2=\left(\mdiag\adiag\north\west\adiag\right)^2$}] (x2) at (0,0) {};
\node[bigdot,label={below:$\mdiag\adiag\mdiag\west\south\mdiag\adiag\north\west\adiag$}] (a) at (3.5,0) {};
\node[bigdot,label={above:$\mdiag\adiag\mdiag\adiag\south\east\adiag\north\west\adiag$}] (b) at (7,0) {};
\node[bigdot,label={below:$\mdiag\adiag\mdiag\adiag\mdiag\east\north\north\east\adiag$}] (c) at (10.5,0) {};
\draw[-Stealth,thick,gray] (x2)  to[bend left=10] node[above] {} (a); 
\draw[-Stealth,thick,gray] (a)  to[bend left=10] node[below] {} (x2); 
\draw[-Stealth,thick,gray] (a)  to[bend left=10] node[above] {} (b); 
\draw[-Stealth,thick,gray] (b)  to[bend left=10] node[below] {} (a); 
\draw[-Stealth,thick,gray] (b)  to[bend left=10] node[above] {} (c); 
\draw[-Stealth,thick,gray] (c)  to[bend left=10] node[below] {} (b); 
\end{tikzpicture}\end{center} }
\caption{The conjugacy graph of $x^2$ for $x=\mdiag\adiag\north\west\adiag$}
\label{F:ConjGraphB4dPer2}\end{figure}

The conjugacy diagram of $SC(x)$ consists of only a single vertex (i.e. the only rigid conjugates of $x$ are those obtained by cyclic permutation of its factors and the action of~$\tau$), whereas the diagram of $x^2$ has four vertices. Each of the rightward-pointing arrows in the diagram represents conjugation by~$\west$, each of the leftward-pointing arrows represents conjugation by $\east$. 

\begin{figure}[htb]
\bigfigure{ \begin{center}\begin{tikzpicture}
\node[dot] (u1) at (0,1.5) {};
\node[dot] (u2) at (1,1.5) {};
\node[dot] (d2) at (1,0) {};
\node[dot] (u3) at (2,1.5) {};
\node[dot] (d3) at (2,0) {};
\node[dot] (u4) at (3,1.5) {};
\node[dot] (d4) at (3,0) {};
\node[dot] (u5) at (4,1.5) {};
\node[dot] (d5) at (4,0) {};
\node[dot] (u6) at (5,1.5) {};
\node[dot] (d6) at (5,0) {};
\node[dot] (u7) at (6,1.5) {};
\node[dot] (d7) at (6,0) {};
\node[dot] (u8) at (7,1.5) {};
\node[dot] (d8) at (7,0) {};
\node[dot] (u9) at (8,1.5) {};
\node[dot] (d9) at (8,0) {};
\node[dot] (u10) at (9,1.5) {};
\node[dot] (d10) at (9,0) {};
\node[dot] (u11) at (10,1.5) {};
\node[dot] (d11) at (10,0) {};
\node[dot] (u12) at (11,1.5) {};
\node[dot] (d12) at (11,0) {};
\draw[-Stealth] (u1) to node[above] {$\adiag$} (u2);
\draw[-Stealth] (u2) to node[above] {$\mdiag$} (u3);
\draw[-Stealth] (u3) to node[above] {$\adiag$} (u4);
\draw[-Stealth] (u4) to node[above] {$\north$} (u5);
\draw[-Stealth] (u5) to node[above] {$\west$} (u6);
\draw[-Stealth] (u6) to node[above] {$\adiag$} (u7);
\draw[-Stealth] (u7) to node[above] {$\mdiag$} (u8);
\draw[-Stealth] (u8) to node[above] {$\adiag$} (u9);
\draw[-Stealth] (u9) to node[above] {$\north$} (u10);
\draw[-Stealth] (u10) to node[above] {$\west$} (u11);
\draw[-Stealth] (u11) to node[above] {$\adiag$} (u12);
\draw[-Stealth,red] (u2) to node[near end] {\black{$\west$}} (d2);
\draw[-Stealth] (u3) to node[near end] {$\south$} (d3);
\draw[-Stealth] (u4) to node[near end] {$\east$} (d4);
\draw[-Stealth] (u5) to node[near end] {$\east$} (d5);
\draw[-Stealth] (u6) to node[near end] {$\east$} (d6);
\draw[-Stealth,red] (u7) to node[near end] {\black{$\east$}} (d7);
\draw[-Stealth] (u8) to node[near end] {$\north$} (d8);
\draw[-Stealth] (u9) to node[near end] {$\west$} (d9);
\draw[-Stealth] (u10) to node[near end] {$\adiag$} (d10);
\draw[-Stealth] (u11) to node[near end] {$\north$} (d11);
\draw[-Stealth,red] (u12) to node[near end] {\black{$\west$}} (d12);
\draw[-Stealth] (u2) to node[near start] {\phantom{ww}$\sw$} (d3);
\draw[-Stealth] (u3) to node[near start] {\phantom{ww}$\se$} (d4);
\draw[-Stealth] (u4) to node[near start] {\phantom{ww}$\ne$} (d5);
\draw[-Stealth] (u5) to node[near start] {\phantom{ww}$\ew$} (d6);
\draw[-Stealth] (u6) to node[near start] {\phantom{ww}$\se$} (d7);
\draw[-Stealth] (u7) to node[near start] {\phantom{ww}$\ne$} (d8);
\draw[-Stealth] (u8) to node[near start] {\phantom{ww}$\nw$} (d9);
\draw[-Stealth] (u9) to node[near start] {\phantom{ww}$\nw$} (d10);
\draw[-Stealth] (u10) to node[near start] {\phantom{ww}$\nw$} (d11);
\draw[-Stealth] (u11) to node[near start] {\phantom{ww}$\nw$} (d12);
\draw[-Stealth] (d2) to node[below] {$\mdiag$} (d3);
\draw[-Stealth] (d3) to node[below] {$\adiag$} (d4);
\draw[-Stealth] (d4) to node[below] {$\mdiag$} (d5);
\draw[-Stealth] (d5) to node[below] {$\west$} (d6);
\draw[-Stealth] (d6) to node[below] {$\south$} (d7);
\draw[-Stealth] (d7) to node[below] {$\mdiag$} (d8);
\draw[-Stealth] (d8) to node[below] {$\adiag$} (d9);
\draw[-Stealth] (d9) to node[below] {$\north$} (d10);
\draw[-Stealth] (d10) to node[below] {$\west$} (d11);
\draw[-Stealth] (d11) to node[below] {$\adiag$} (d12);
\draw[-Stealth,dashed] (1.05,2.1) to node[above] {$x$} (5.95,2.1);
\draw[-Stealth,dashed] (6.05,2.1) to node[above] {$x$} (10.95,2.1);
\draw[-] (1,2.05) to (1,2.15);
\draw[-] (6,2.05) to (6,2.15);
\draw[-] (11,2.05) to (11,2.15);
\draw[color=blue,thick] (1.25,1.5) arc [start angle=0, end angle=180, radius=0.25];
\draw[color=blue,thick] (2.25,1.5) arc [start angle=0, end angle=180, radius=0.25];
\draw[color=blue,thick] (3.25,1.5) arc [start angle=0, end angle=180, radius=0.25];
\draw[color=blue,thick] (4.25,1.5) arc [start angle=0, end angle=180, radius=0.25];
\draw[color=blue,thick] (5.25,1.5) arc [start angle=0, end angle=180, radius=0.25];
\draw[color=blue,thick] (6.25,1.5) arc [start angle=0, end angle=180, radius=0.25];
\draw[color=blue,thick] (7.25,1.5) arc [start angle=0, end angle=180, radius=0.25];
\draw[color=blue,thick] (8.25,1.5) arc [start angle=0, end angle=180, radius=0.25];
\draw[color=blue,thick] (9.25,1.5) arc [start angle=0, end angle=180, radius=0.25];
\draw[color=blue,thick] (10.25,1.5) arc [start angle=0, end angle=180, radius=0.25];
\draw[color=blue,thick] (11.25,1.5) arc [start angle=0, end angle=180, radius=0.25];
\draw[color=blue,thick] (2.25,0) arc [start angle=0, end angle=121, radius=0.25];
\draw[color=blue,thick] (3.25,0) arc [start angle=0, end angle=121, radius=0.25];
\draw[color=blue,thick] (4.25,0) arc [start angle=0, end angle=121, radius=0.25];
\draw[color=blue,thick] (5.25,0) arc [start angle=0, end angle=121, radius=0.25];
\draw[color=blue,thick] (6.25,0) arc [start angle=0, end angle=121, radius=0.25];
\draw[color=blue,thick] (7.25,0) arc [start angle=0, end angle=121, radius=0.25];
\draw[color=blue,thick] (8.25,0) arc [start angle=0, end angle=121, radius=0.25];
\draw[color=blue,thick] (9.25,0) arc [start angle=0, end angle=121, radius=0.25];
\draw[color=blue,thick] (10.25,0) arc [start angle=0, end angle=121, radius=0.25];
\draw[color=blue,thick] (11.25,0) arc [start angle=0, end angle=121, radius=0.25];
\draw[densely dotted,color=blue,thick] (1.75,0) arc [start angle=180, end angle=360, radius=0.25];
\draw[densely dotted,color=blue,thick] (2.75,0) arc [start angle=180, end angle=360, radius=0.25];
\draw[densely dotted,color=blue,thick] (3.75,0) arc [start angle=180, end angle=360, radius=0.25];
\draw[densely dotted,color=blue,thick] (4.75,0) arc [start angle=180, end angle=360, radius=0.25];
\draw[densely dotted,color=blue,thick] (5.75,0) arc [start angle=180, end angle=360, radius=0.25];
\draw[densely dotted,color=blue,thick] (6.75,0) arc [start angle=180, end angle=360, radius=0.25];
\draw[densely dotted,color=blue,thick] (7.75,0) arc [start angle=180, end angle=360, radius=0.25];
\draw[densely dotted,color=blue,thick] (8.75,0) arc [start angle=180, end angle=360, radius=0.25];
\draw[densely dotted,color=blue,thick] (9.75,0) arc [start angle=180, end angle=360, radius=0.25];
\draw[densely dotted,color=blue,thick] (10.75,0) arc [start angle=180, end angle=360, radius=0.25];
\node[circle,label={right:$\ldots$}] (dots1) at (-0.1,0.75) {};
\node[circle,label={right:$\ldots$}] (dots2) at (11.1,0.75) {};
\end{tikzpicture}\end{center} }
\caption{The conjugation of $x^2$ by $\west$ for $x=\mdiag\adiag\north\west\adiag$}
\label{F:ConjugationInB4dPer2} \end{figure}
\end{example}
We remark that the conjugacy graphs of $x^2$ can become arbitrarily large, e.g.if we choose $x$ from the family $x=\mdiag\adiag\north\west\adiag \left( \mdiag \adiag \mdiag \adiag \right)^s$ with $s\in\mathbb{N}$.

\begin{example}\label{E:ConjugationInB4dWithInf}
This example is similar to the previous one, but it illustrates the case $\inf(x)\neq 0$. In $\calB_4^*$, we consider $x=\delta \adiag\adiag$. 
\begin{figure}[htb]
\bigfigure{ \begin{center}\begin{tikzpicture}
\node[dot] (u1) at (0,1.5) {};
\node[dot] (u2) at (1.5,1.5) {};
\node[dot] (u3) at (3,1.5) {};
\node[dot] (u4) at (4.5,1.5) {};
\node[dot] (u5) at (6,1.5) {};
\node[dot] (u6) at (7.5,1.5) {};
\node[dot] (u7) at (9,1.5) {};
\node[dot] (u8) at (10.5,1.5) {};
\node[dot] (d2) at (1.5,0) {};
\node[dot] (d3) at (3,0) {};
\node[dot] (d4) at (4.5,0) {};
\node[dot] (d5) at (6,0) {};
\node[dot] (d6) at (7.5,0) {};
\node[dot] (d7) at (9,0) {};
\node[dot] (d8) at (10.5,0) {};
\draw[-Stealth] (u1) to node[above] {$\adiag$} (u2); 
\draw[-Stealth] (u2) to node[above] {$\square$} (u3);
\draw[-Stealth] (u3) to node[above] {$\adiag$} (u4);
\draw[-Stealth] (u4) to node[above] {$\adiag$} (u5);
\draw[-Stealth] (u5) to node[above] {$\square$} (u6);
\draw[-Stealth] (u6) to node[above] {$\adiag$} (u7);
\draw[-Stealth] (u7) to node[above] {$\adiag$} (u8);
\draw[-Stealth,red] (u2) to node {$\textcolor{black}{\west}$\phantom{ww}} (d2); 
\draw[-Stealth] (u4) to node {$\east$\phantom{ww}} (d4);
\draw[-Stealth,red] (u5) to node {$\textcolor{black}{\east}$\phantom{ww}} (d5);
\draw[-Stealth] (u7) to node {$\west$\phantom{ww}} (d7);
\draw[-Stealth,red] (u8) to node {$\textcolor{black}{\west}$\phantom{ww}} (d8);
\draw[-Stealth] (u1) to node[near start] {\phantom{$ww.$}$\nw$} (d2);
\draw[-Stealth] (u2) to node[near start] {\phantom{$ww.$}$\sw$} (d3);
\draw[-Stealth] (u3) to node[near start] {\phantom{$ww.$}$\se$} (d4);
\draw[-Stealth] (u4) to node[near start] {\phantom{$ww.$}$\se$} (d5);
\draw[-Stealth] (u5) to node[near start] {\phantom{$ww.$}$\ne$} (d6);
\draw[-Stealth] (u6) to node[near start] {\phantom{$ww.$}$\nw$} (d7);
\draw[-Stealth] (u7) to node[near start] {\phantom{$ww.$}$\nw$} (d8);
\draw[-Stealth] (d2) to node[below] {$\mdiag$} (d3);
\draw[-Stealth] (d3) to node[below] {$\square$} (d4);
\draw[-Stealth] (d4) to node[below] {$\south$} (d5);
\draw[-Stealth] (d5) to node[below] {$\mdiag$} (d6);
\draw[-Stealth] (d6) to node[below] {$\square$} (d7);
\draw[-Stealth] (d7) to node[below] {$\north$} (d8);
\node[circle,label={right:$\ldots$}] (dots1) at (-0.8,0.75) {};
\node[circle,label={right:$\ldots$}] (dots2) at (10.6,0.75) {};
\draw[color=blue,thick] (1.2,1.5) arc [start angle=180, end angle=90, radius=0.3];
\draw[color=blue,thick] (3.3,1.5) arc [start angle=0, end angle=90, radius=0.3];
\draw[color=blue,thick] (4.8,1.5) arc [start angle=0, end angle=180, radius=0.3];
\draw[color=blue,thick] (5.7,1.5) arc [start angle=180, end angle=90, radius=0.3]; 
\draw[color=blue,thick] (7.8,1.5) arc [start angle=0, end angle=90, radius=0.3]; 
\draw[color=blue,thick] (9.3,1.5) arc [start angle=0, end angle=180, radius=0.3];
\draw[color=blue,thick] (4.8,0) arc [start angle=0, end angle=135, radius=0.3];
\draw[color=blue,thick] (6.3,0) arc [start angle=0, end angle=135, radius=0.3];
\draw[color=blue,thick] (9.3,0) arc [start angle=0, end angle=135, radius=0.3];
\draw[densely dotted,color=blue,thick] (2.7,0) arc [start angle=180, end angle=270, radius=0.3];
\draw[densely dotted,color=blue,thick] (4.8,0) arc [start angle=360, end angle=270, radius=0.3];
\draw[densely dotted,color=blue,thick] (5.7,0) arc [start angle=180, end angle=360, radius=0.3];
\draw[densely dotted,color=blue,thick] (7.2,0) arc [start angle=180, end angle=270, radius=0.3];
\draw[densely dotted,color=blue,thick] (9.3,0) arc [start angle=360, end angle=270, radius=0.3];
\draw[-Stealth,dashed] (1.55,2.1) to node[above] {$x$} (5.95,2.1);
\draw[-Stealth,dashed] (6.05,2.1) to node[above] {$x$} (10.45,2.1);
\draw[-] (1.5,2.05) to (1.5,2.15);
\draw[-] (6,2.05) to (6,2.15);
\draw[-] (10.5,2.05) to (10.5,2.15);
\end{tikzpicture}\end{center} }
\caption{For $x=\delta\cdot\adiag\adiag$, the conjugation of $x^2$ by $\west$ yields $\mdiag \,\delta\, \south \mdiag \,\delta\, \north = \delta^2\,\mdiag\east\adiag\north$.}
\label{F:ConjugationInB4dWithInf} \end{figure}
Then the sequence $(|SC(x^n)|)_{n\in\mathbb N}$ is periodic of period~2, with $|SC(x)|=4$ and $|SC(x^2)|=12$. Indeed, the conjugacy graph of~$x$ has only one vertex, representing the four obvious elements $\delta\,\adiag\adiag$, $\delta\,\adiag\mdiag$, $\delta\,\mdiag\mdiag$, and $\delta\,\mdiag\adiag$. The conjugacy graph of $x^2$ has two vertices: the vertex represented by $x^2=(\delta\,\adiag\adiag)^2=\delta^2\,\mdiag\mdiag\adiag\adiag$, and the one represented by $\west^{-1}x^2\west=\delta^2\,\mdiag\east\mdiag\north$. 
Figure~\ref{F:ConjugationInB4dWithInf} shows the calculation that conjugating $x^2$ by~$\west$ yields $\mdiag \,\delta\, \south \mdiag \,\delta\, \north$. The latter word is almost in normal form: in order to obtain the normal form, it suffices to slide the letters $\delta$ to the start of the word, using the rule $x_i\cdot\delta=\delta\cdot \tau(x_i)$. We find: $\west^{-1}\cdot\delta\,\adiag\adiag\cdot\west = \delta^2\,\mdiag\east\adiag\north$. 
\end{example}

\begin{example}\label{E:ConjugationInB4dPer3}
Still in $\calB_4^*$, we consider $x=\south\south\east\east\north\north\west\west$. 
Then the sequence $(|SC(x^n)|)_{n\in\mathbb N}$ appears to be periodic of period~3, with $|SC(x)|=|SC(x^2)|=3$ and $|SC(x^3)|=32$.
\begin{figure}[htb]
\bigfigure{ \begin{center}\begin{tikzpicture}
\node[dot] (u1) at (0,1) {};
\node[dot] (u2) at (0.5,1) {};
\node[dot] (d2) at (0.5,0) {};
\node[dot] (u3) at (1,1) {};
\node[dot] (d3) at (1,0) {};
\node[dot] (u4) at (1.5,1) {};
\node[dot] (d4) at (1.5,0) {};
\node[dot] (u5) at (2,1) {};
\node[dot] (d5) at (2,0) {};
\node[dot] (u6) at (2.5,1) {};
\node[dot] (d6) at (2.5,0) {};
\node[dot] (u7) at (3,1) {};
\node[dot] (d7) at (3,0) {};
\node[dot] (u8) at (3.5,1) {};
\node[dot] (d8) at (3.5,0) {};
\node[dot] (u9) at (4,1) {};
\node[dot] (d9) at (4,0) {};
\node[dot] (u10) at (4.5,1) {};
\node[dot] (d10) at (4.5,0) {};
\node[dot] (u11) at (5,1) {};
\node[dot] (d11) at (5,0) {};
\node[dot] (u12) at (5.5,1) {};
\node[dot] (d12) at (5.5,0) {};
\node[dot] (u13) at (6,1) {};
\node[dot] (d13) at (6,0) {};
\node[dot] (u14) at (6.5,1) {};
\node[dot] (d14) at (6.5,0) {};
\node[dot] (u15) at (7,1) {};
\node[dot] (d15) at (7,0) {};
\node[dot] (u16) at (7.5,1) {};
\node[dot] (d16) at (7.5,0) {};
\node[dot] (u17) at (8,1) {};
\node[dot] (d17) at (8,0) {};
\node[dot] (u18) at (8.5,1) {};
\node[dot] (d18) at (8.5,0) {};
\node[dot] (u19) at (9,1) {};
\node[dot] (d19) at (9,0) {};
\node[dot] (u20) at (9.5,1) {};
\node[dot] (d20) at (9.5,0) {};
\node[dot] (u21) at (10,1) {};
\node[dot] (d21) at (10,0) {};
\node[dot] (u22) at (10.5,1) {};
\node[dot] (d22) at (10.5,0) {};
\node[dot] (u23) at (11,1) {};
\node[dot] (d23) at (11,0) {};
\node[dot] (u24) at (11.5,1) {};
\node[dot] (d24) at (11.5,0) {};
\node[dot] (u25) at (12,1) {};
\node[dot] (d25) at (12,0) {};
\node[dot] (u26) at (12.5,1) {};
\node[dot] (d26) at (12.5,0) {};
\draw[-Stealth] (u1) to node[above] {$\west$} (u2);
\draw[-Stealth] (u2) to node[above] {$\south$} (u3);
\draw[-Stealth] (u3) to node[above] {$\south$} (u4);
\draw[-Stealth] (u4) to node[above] {$\east$} (u5);
\draw[-Stealth] (u5) to node[above] {$\east$} (u6);
\draw[-Stealth] (u6) to node[above] {$\north$} (u7);
\draw[-Stealth] (u7) to node[above] {$\north$} (u8);
\draw[-Stealth] (u8) to node[above] {$\west$} (u9);
\draw[-Stealth] (u9) to node[above] {$\west$} (u10);
\draw[-Stealth] (u10) to node[above] {$\south$} (u11);
\draw[-Stealth] (u11) to node[above] {$\south$} (u12);
\draw[-Stealth] (u12) to node[above] {$\east$} (u13);
\draw[-Stealth] (u13) to node[above] {$\east$} (u14);
\draw[-Stealth] (u14) to node[above] {$\north$} (u15);
\draw[-Stealth] (u15) to node[above] {$\north$} (u16);
\draw[-Stealth] (u16) to node[above] {$\west$} (u17);
\draw[-Stealth] (u17) to node[above] {$\west$} (u18);
\draw[-Stealth] (u18) to node[above] {$\south$} (u19);
\draw[-Stealth] (u19) to node[above] {$\south$} (u20);
\draw[-Stealth] (u20) to node[above] {$\east$} (u21);
\draw[-Stealth] (u21) to node[above] {$\east$} (u22);
\draw[-Stealth] (u22) to node[above] {$\north$} (u23);
\draw[-Stealth] (u23) to node[above] {$\north$} (u24);
\draw[-Stealth] (u24) to node[above] {$\west$} (u25);
\draw[-Stealth] (u25) to node[above] {$\west$} (u26);
\draw[-Stealth] (d2) to node[below] {$\south$} (d3);
\draw[-Stealth] (d3) to node[below] {$\south$} (d4);
\draw[-Stealth] (d4) to node[below] {$\east$} (d5);
\draw[-Stealth] (d5) to node[below] {$\north$} (d6);
\draw[-Stealth] (d6) to node[below] {$\north$} (d7);
\draw[-Stealth] (d7) to node[below] {$\mdiag$} (d8);
\draw[-Stealth] (d8) to node[below] {$\west$} (d9);
\draw[-Stealth] (d9) to node[below] {$\west$} (d10);
\draw[-Stealth] (d10) to node[below] {$\south$} (d11);
\draw[-Stealth] (d11) to node[below] {$\east$} (d12);
\draw[-Stealth] (d12) to node[below] {$\east$} (d13);
\draw[-Stealth] (d13) to node[below] {$\adiag$} (d14);
\draw[-Stealth] (d14) to node[below] {$\north$} (d15);
\draw[-Stealth] (d15) to node[below] {$\north$} (d16);
\draw[-Stealth] (d16) to node[below] {$\west$} (d17);
\draw[-Stealth] (d17) to node[below] {$\south$} (d18);
\draw[-Stealth] (d18) to node[below] {$\south$} (d19);
\draw[-Stealth] (d19) to node[below] {$\mdiag$} (d20);
\draw[-Stealth] (d20) to node[below] {$\east$} (d21);
\draw[-Stealth] (d21) to node[below] {$\east$} (d22);
\draw[-Stealth] (d22) to node[below] {$\north$} (d23);
\draw[-Stealth] (d23) to node[below] {$\west$} (d24);
\draw[-Stealth] (d24) to node[below] {$\west$} (d25);
\draw[-Stealth] (d25) to node[below] {$\adiag$} (d26);
\draw[-Stealth,red] (u2) to node {\black{$\north$}} (d2);
\draw[-Stealth,red] (u10) to node {\black{$\east$}} (d10);
\draw[-Stealth,red] (u18) to node {\black{$\mdiag$}} (d18);
\draw[-Stealth,red] (u26) to node {\black{$\north$}} (d26);
\node[circle,label={right:$\ldots$}] (dots1) at (-1.1,0.5) {};
\node[circle,label={right:$\ldots$}] (dots2) at (13,0.5) {};
\end{tikzpicture}\end{center} }
\caption{The conjugation of $x^3$ by $\north$ for $x=\south\south\east\east\north\north\west\west$}
\label{F:ConjugationInB4dPer3} \end{figure}
\end{example}


\section{Periodicity of $|SC(x^n)|$}\label{S:Periodicity}

We are now going to prove Theorem~\ref{T:LatticeOfExponents}.

\begin{lemma}\label{L:LatticeOfExponents}
Let $G$ be a Garside group.

(a) Suppose $y_1,y_2$ are rigid elements of~$G$ which have a common power, i.e.there are positive integers $k,l$ such that $y_{1}^{\thinspace k}=y_2^{\thinspace l}$. Then they have a common root which is rigid: there is a rigid element $y_*$ with $y_1=y_*^{\frac{l}{gcd(k,l)}}$ and $y_2=y_*^{\frac{k}{gcd(k,l)}}$.

(b) If $y\in G$ is such that $y^{m}$ and $y^{n}$ are rigid, then 
$y^{\operatorname{gcd}(m,n)}$ is rigid.
\end{lemma}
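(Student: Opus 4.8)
The plan is to obtain part (b) as an easy consequence of part (a), and to prove part (a) by realizing the left normal forms of $y_1$, $y_2$ and of their common power as right-anchored windows of a single bi-infinite, $\tau$-twisted periodic sequence of simple factors. I will treat (a) as the crux. Granting (a), here is the deduction of (b): suppose $y^m$ and $y^n$ are rigid, set $d=\gcd(m,n)$, and apply (a) to the rigid elements $y_1:=y^m$ and $y_2:=y^n$, which satisfy the common-power relation $y_1^{\,n}=y^{mn}=y_2^{\,m}$. Part (a) produces a rigid $y_*$ with $y^m=y_*^{\,m/d}$ and $y^n=y_*^{\,n/d}$. Choosing integers $a,b$ with $am+bn=d$ and using that powers of $y_*$ commute, I compute
\[
y^{d}=(y^m)^a(y^n)^b=\bigl(y_*^{\,m/d}\bigr)^a\bigl(y_*^{\,n/d}\bigr)^b=y_*^{(am+bn)/d}=y_*,
\]
so $y^{\gcd(m,n)}=y_*$ is rigid.

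For part (a), I first record the numerical constraints. The element $z:=y_1^{\,k}=y_2^{\,l}$ is rigid (a positive power of a rigid element), and since rigidity multiplies infima and canonical lengths one has $\inf(z)=k\inf(y_1)=l\inf(y_2)$ and $\ell_{can}(z)=k\,\ell_{can}(y_1)=l\,\ell_{can}(y_2)$. Writing $g=\gcd(k,l)$ and using that $k/g$ and $l/g$ are coprime, these identities force $\ell_{can}(y_1)=(l/g)\,a$ and $\ell_{can}(y_2)=(k/g)\,a$ with $a:=\gcd(\ell_{can}(y_1),\ell_{can}(y_2))$, and similarly $\inf(y_1)=(l/g)\pi$ and $\inf(y_2)=(k/g)\pi$ for a suitable integer $\pi$; these are exactly the invariants a common root of canonical length $a$ and infimum $\pi$ must have. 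The geometric input is that to any rigid $w$ one attaches the bi-infinite sequence $(\zeta_i)_{i\in\mathbb Z}$ of simple factors of its powers, anchored so that $\zeta_0=\phi(w)$ is the rightmost factor, and that rigidity of $w$ is \emph{equivalent} to the two conditions $\zeta_i|\zeta_{i+1}$ for all $i$ and the twisted periodicity $\zeta_{i-\ell_{can}(w)}=\tau^{\inf(w)}(\zeta_i)$. Because $z=y_1^{\,k}=y_2^{\,l}$, the three elements $z,y_1,y_2$ are cofinal powers of one another and hence define the very same sequence $\zeta$; consequently $\zeta$ carries two twisted periodicities valid for all $i$, namely $\zeta_{i-s}=\tau^{p_1}(\zeta_i)$ and $\zeta_{i-t}=\tau^{p_2}(\zeta_i)$, where $s=\ell_{can}(y_1)$, $t=\ell_{can}(y_2)$, $p_1=\inf(y_1)$, $p_2=\inf(y_2)$.

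I then combine the two periodicities. Choosing $\alpha,\beta$ with $\alpha s+\beta t=a$ and composing the relations — legitimate because both hold for every index and the twists, being powers of $\tau$, commute — yields $\zeta_{i-a}=\tau^{\pi}(\zeta_i)$ for all $i$, where $\alpha p_1+\beta p_2=\pi$ follows from $\alpha(l/g)+\beta(k/g)=1$. Thus $\zeta$ is $(a,\pi)$-twisted periodic, and I define $y_*$ to be the element with normal form $\Delta^{\pi}\zeta_{(1)}|\cdots|\zeta_{(a)}$ read off one right-anchored period; it is rigid precisely because this periodicity makes its bi-infinite extension equal to $\zeta$. To finish, I use that a rigid element is pinned down by its bi-infinite factor sequence together with \emph{both} its infimum and its canonical length: comparing right-anchored windows, $y_*^{\,l/g}$ and $y_1$ share the sequence $\zeta$, the infimum $p_1=(l/g)\pi$, and the length $s=(l/g)a$, hence coincide, and likewise $y_*^{\,k/g}=y_2$. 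This gives (a).

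The main obstacle lives entirely in part (a) and is the careful handling of the $\tau$-twist when $\inf(w)\neq 0$: one must set up the bi-infinite sequence with indices and $\tau$-exponents bookkept correctly, verify rigorously that rigidity is equivalent to the twist-periodicity $\zeta_{i-\ell_{can}(w)}=\tau^{\inf(w)}(\zeta_i)$, and justify the three-way ``same sequence'' claim for $z,y_1,y_2$. The construction of $y_*$ and the verifications $y_*^{\,l/g}=y_1$, $y_*^{\,k/g}=y_2$ rest on matching infimum, canonical length and anchored window \emph{simultaneously}; the subtle point worth stressing is that the bi-infinite sequence alone does not determine a rigid element (e.g.\ $y_*$ and $y_*^{2}$ share it when $\inf=0$), so all three invariants genuinely have to be tracked.
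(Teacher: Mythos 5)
Your proof is correct and follows essentially the same strategy as the paper's: part (b) is deduced from part (a) by the identical B\'ezout computation, and part (a) rests on the same key idea of exploiting the two periodicities that $y_1$ and $y_2$ impose on the factor sequence of their common power $y_1^{\,k}=y_2^{\,l}$. The paper packages this as a decomposition of $y_1^{\,k}$ into $kb$ blocks of the form $\Delta^{p/b}Y_i$ whose sequence has the two coprime periods $a=k/\gcd(k,l)$ and $b=l/\gcd(k,l)$ and is therefore constant, whereas you B\'ezout-combine two $\tau$-twisted periodicities of a right-anchored bi-infinite factor sequence; these are the same argument with different bookkeeping, and your flagged technical points (the twist when $\inf\neq 0$, and the fact that the sequence alone does not determine the element) are exactly the ones the paper's block formulation also has to handle.
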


\begin{proof} (a) If either $k=1$ or $l=1$, then the result is trivial, so we assume that $k,l>1$.
In order to fix notations, we say the Garside normal forms of $y_1$ and $y_2$ are $y_1 = \Delta^p u_1\cdots u_r$ and $y_2=\Delta^q v_1\cdots v_s$.

Since $y_1$  is rigid, the normal form of~$y_1^{\thinspace k}$ is obtained from the $k$-fold repetition
$$
(\Delta^p u_1\cdots u_r) (\Delta^p u_1\cdots u_r) \cdots (\Delta^p u_1\cdots u_r) 
$$
by just sliding the factors $\Delta^p$ to the left. Hence, the normal form of $y_1^{\thinspace k}$ is
$$
y_1^{\thinspace k}= \Delta^{pk}\tau^{p(k-1)}(u_1)\cdots\tau^{p(k-1)}(u_r) \tau^{p(k-2)}(u_1)\cdots\tau^{p(k-2)}(u_r) \cdots\cdots u_1\cdots u_r
$$
In particular, $\inf(y_1^{\thinspace k}) = p\cdot k$, and $\ell_{can}(y_1^{\thinspace k})= r\cdot k$.

In the same way, as $y_2$ is rigid, the normal form of $y_2^{\thinspace l}$ is
$$
y_2^{\thinspace l}= \Delta^{ql}\tau^{q(l-1)}(u_1)\cdots\tau^{q(l-1)}(u_s) \tau^{q(l-2)}(u_1)\cdots\tau^{q(l-2)}(u_s) \cdots\cdots u_1\cdots u_s
$$
As the Garside normal form of $y_1^{\thinspace k}=y_2^{\thinspace l}$ is unique, it follows that $pk=ql$ and $rk=sl$.

We now denote $d=gcd(k,l)$, as well as $a=\frac{k}{d}$ and $b=\frac{l}{d}$, so that $k=ad$ and $l=bd$, where $a$ and~$b$ are coprime. With this notation, $p=\frac{ql}{k}=\frac{qb}{a}$ and $r=\frac{sl}{k}=\frac{sb}{a}$ are integers, which implies that $p$ and~$r$ are divisible by $b=\frac{l}{gcd(k,l)}$.
Similarly, $q$ and~$s$ are divisible by $a=\frac{k}{gcd(k,l)}$.

Now, as the $\Delta$-factors can be moved along the Garside normal form to any desired position (conjugating the simple factors), we can decompose $y_1^{\thinspace k}$ into $k\cdot b$ factors:
$$
y_1^{\thinspace k} = \Delta^\frac{p}{b} Y_1 \cdot \Delta^\frac{p}{b} Y_2 \cdot\ldots\cdot \Delta^\frac{p}{b} Y_{kb}
$$
where each $Y_i$ is a normal form word consisting of $\frac{r}{b}=\frac{s}{a}$ consecutive letters of the normal form representative of~$y_1^{\thinspace k}$, conjugated by a suitable power of~$\Delta$. Also, $y_2^{\thinspace l}$ has a similar decomposition 
$$
y_2^{\thinspace l} = \Delta^\frac{q}{a} Y_1 \cdot \Delta^\frac{q}{a} Y_2 \cdot\ldots\cdot \Delta^\frac{q}{a} Y_{la}
$$
where we recall that $\frac{q}{a}=\frac{p}{b}$ and $la=kb$.

Now consider the finite sequence $\mathcal S=(\Delta^{\frac{p}{b}}Y_1, \Delta^{\frac{p}{b}}Y_2, \ldots, \Delta^{\frac{p}{b}}Y_{kb})$. By construction,
$$
y_1 = \Delta^{\frac{p}{b}}Y_1 \cdots \Delta^{\frac{p}{b}}Y_{b} = 
\Delta^{\frac{p}{b}}Y_{b+1} \cdots \Delta^{\frac{p}{b}}Y_{2b} = \ldots =
\Delta^{\frac{p}{b}}Y_{kb-b+1} \cdots \Delta^{\frac{p}{b}}Y_{kb}
$$
Thus the sequence $\mathcal S$ is periodic of period~$b$. By the same argument applied to the element~$y_2$, the sequence is periodic of period~$a$. Since $a$ and~$b$ are coprime, the sequence is constant. 
Thus $Y_j=Y_1$ for $j=1,\ldots,kb$. We can set $y_* = \Delta^{\frac{p}{b}}Y_1 = \Delta^{\frac{q}{a}}Y_1$, and we obtain
$$
y_1 = y_*^{\thinspace b}  \qquad \text{and}\qquad y_2 = y_*^{\thinspace a}
$$
as desired. Finally, we see that $y_*=\Delta^{\frac{p}{b}}Y_1$ is rigid. Recall that the normal form of $y^k$ is obtained by just concatenating $kb$ copies of the normal form of $y^*$ and sliding the $\Delta$ factors to the left. Thus, the same happens with just two copies of $y_*$, so $y_*$ is rigid. This completes the proof of Lemma~\ref{L:LatticeOfExponents}(a).

In order to prove part~(b), we apply part~(a) in the special case $y_1=y^m$, $y_2=y^n$, $k=n$ and $l=m$. If we denote $d=\gcd(m,n)$, we obtain that it exists a rigid element $y_*$ such that $y^m=y_*^{\frac{m}{d}}$  and $y^n=y_*^{\frac{n}{d}}$. We will show that $y^d$ is rigid by proving that $y^d=y_*$. For that purpose, we remark that by B\'ezout's identity, there are integers $\alpha,\beta$ such that $\alpha m + \beta n = d$. Hence:
$$
  y^d = y^{\alpha m + \beta n} = (y^m)^\alpha (y^n)^{\beta} = 
  (y_*^{\frac{m}{d}})^\alpha (y_*^{\frac{n}{d}})^{\beta} = (y_*)^{\frac{\alpha m + \beta n}{d}} = y_* \vspace{-3mm} 
$$
\end{proof}

\begin{proof}[Proof of Theorem~\ref{T:LatticeOfExponents}] 
Part~(a) of the theorem is an immediate consequence of Lemma~\ref{L:LatticeOfExponents}(b).

In order to prove Theorem~\ref{T:LatticeOfExponents}(b), we define a \emph{primitive} element of $\cup_{n\in\mathbb N} SC(x^n)$ to be an element which cannot be written as a power of any other element of~$\cup_{n\in\mathbb N} SC(x^n)$.
(Note that it is conceivable that, for instance, a primitive element lying in~$SC(x^2)$ might have a rigid third root -- that root would necessarily be non-conjugate to any power of~$x$.)

We claim that two distinct primitive elements $y_1, y_2$ of~$\cup_{n\in\mathbb N} SC(x^n)$ cannot have a common power. Indeed, suppose that $y_1\in SC(x^m)$ and $y_2\in SC(x^n)$ for some $m,n>0$ are primitive elements having a common power $z=y_1^k=y_2^l$. Let us denote $d=\gcd(k,l)$ and $r=\operatorname{lcm}(k,l)$.

We can assume that $x$ is non-trivial, otherwise we could not have two distinct primitive elements. Hence, either $\inf(x)\neq 0$ or $\ell_{can}(x)\neq 0$. Now $z$ is rigid and conjugate to $x^{mk}$ and also to $x^{nl}$, so $\inf(z)=mk\inf(x)=nl\inf(x)$ and $\ell_{can}(z)=mk\ell_{can}(x)=nl\ell_{can}(x)$. Hence, $mk=nl$, it is a multiple of $r$, and we can denote the integer $t=\frac{mk}{r}=\frac{nl}{r}$. 

By Lemma~\ref{L:LatticeOfExponents}(a), there is a rigid element $y_*$ such that $y_1=y_*^{\frac{l}{d}}$ and $y_2=y_*^{\frac{k}{d}}$. This implies that $y_*^{r}=y_*^{\frac{kl}{d}}=y_1^k = y_2^l = z$. But $(x^{t})^r=\left(x^{\frac{mk}{r}}\right)^{r} = x^{mk}$, which is conjugate to $z$. That is, $y_*$ and a conjugate of $x^t$ have a common $r$-th power, $z$. Since, by assumption, roots in $G$ are unique up to conjugacy, it follows that $y_*$ is a rigid conjugate of $x^t$. Hence $y_*\in \cup_{n\in\mathbb N} SC(x^n)$. As $y_1$ and $y_2$ are powers of $y_*$, they cannot be both primitive, and we get a contradiction. This proves the claim that distinct primitive elements cannot have a common power.

Now we observe that the set $\cup_{n\in\mathbb N} SC(x^n)$ consists of the set of all powers of all primitive elements.
It follows that
$$ |SC(x^n)| = \sum_{k|n} |\{ y\in SC(x^k) | y \text{ primitive} \}| $$
Since the sequence $|SC(x^n)|_{n\in\mathbb N}$ is bounded, we deduce that there are only finitely many primitive elements, say $p$. More explicitly, if we enumerate the primitive elements arbitrarily, and we denote $r_i$ the positive integer such that the $i$th primitive element belongs to $SC(x^{r_i})$, then for $r_*=lcm(r_1,\ldots,r_p)$ we have:
$|SC(x^{r_*})| = max_{n\in\mathbb N} |SC(x^n)|$, and $r_*$ is the smallest number with this property. 
Now our sequence is periodic of period~$r_*$.
Thus we have proven Theorem~\ref{T:LatticeOfExponents} and explicitly identified the number~$r_*$.
\end{proof}

Next, we prove the promised implication between conjectures. We recall

{\bf Conjecture~\ref{C:MainConjecture2} } {\sl For any Garside group~$G$, there exists an integer~$\calR_G$ such that for any element $y$ of~$G$ which has a rigid power and which is conjugate to a rigid element, $r(y)\leqslant \calR_G$.} 
\smallskip

{\bf Proposition~\ref{P:ImplicConjectures}} 
{\sl If Conjecture~\ref{C:MainConjecture2} holds for some Garside group~$G$ where roots are unique up to conjugacy, then our Main Conjecture~\ref{C:MainConjecture} holds for~$G$.}

\begin{proof}
Suppose $G$ is a Garside group with unique roots up to conjugacy and in which Conjecture~\ref{C:MainConjecture2} holds. Let us study the coefficients $r_1,\ldots,r_p$ in the proof of Theorem~\ref{T:LatticeOfExponents}(b) above. Let's say the $i$th primitive element is $t^{-1} x^{r_i} t\in SC(x^{r_i})$. Then $r_i = r(t^{-1} x t)\leqslant \calR_G$. Thus we have a uniform bound on the coefficients $r_1,\ldots,r_p$, i.e.\ on ``the lowest floor where new rigid elements appear''.
So, to summarize, $G$ has the following property which, for later reference, we call Property~$(*)$: 

\begin{notation}
We say a Garside group~$G$ satisfies \emph{Property (\negthinspace$*$\negthinspace)} if in~$G$ roots are unique up to conjugacy and there exists a number~$\calR_G$ such that for any rigid element~$x$ all the primitive elements of $\cup_{n\in\mathbb N} SC(x^n)$ lie in the finite set $\cup_{n\leqslant \calR_G} SC(x^n)$. 
\end{notation}

We have seen in the proof of Theorem~\ref{T:LatticeOfExponents}(b) that for any Garside group with Property~$(*)$, the sequence $(|SC(x^n|)_{n\in\mathbb N}$ is periodic of period $lcm(r_1,\ldots,r_p)$ (where $p$ is the number of primitive elements). Thus for any rigid element~$x$ we obtain the bound of $lcm(1,2,3,\ldots,\calR_G)$ on the period of the sequence $|SC(x^n)|$, and this bound depends only on the underlying group~$G$ and its Garside structure.
\end{proof}


\section{Proof of Theorem~\ref{T:main}}\label{S:Proof}

\subsection{Circular Garside groups}\label{SS:ProofForCircularGroups}
Our aim now is to prove Theorem~\ref{T:main}(a). We recall from \cite{GarnierRoots} the definition of the \emph{circular} Garside groups. We fix two positive integer parameters $m,\ell$, along with an alphabet $\{a_0,\ldots,a_{m-1}\}$, where we see the indices in $\mathbb{Z}/m\mathbb{Z}$. For $i\in \mathbb{Z}$ and $p\in \mathbb{Z}_{\geqslant 1}$, we define $s(i,p)$ as the product 
\[s(i,p) = a_i\cdots a_{i+p-1}\]
We define the \emph{circular monoid} $M(m,\ell)$ by generators and relations:
\[M(m,\ell)=\langle a_0,\ldots,a_{m-1}~|~\forall i\in \intv{0,m-1},~ s(i,\ell)=s(i+1,\ell)\rangle^+.\]
This monoid is a Garside monoid with Garside element $\Delta=s(0,\ell)$. Its attached Garside group is denoted by $G(m,\ell)$ and called a \emph{circular group}.

By definition, the presentation of $G(2,e)$ is the Artin presentation of the Artin group of type $I_2(e)$, whereas the presentation of $G(e,2)$ is the dual presentation of the same group. In particular, we have $G(2,3)=\mathcal{B}_3$ and $G(3,2)=\mathcal{B}_3^*$.

We list a few remarkable features of circular groups which we will use often:
\begin{itemize}
\item The nontrivial simple elements are exactly the elements $s(i,p)$ with $1\leqslant p\leqslant \ell$.
\item If $s(i,p)$ is a simple element which is nontrivial and different from $\Delta$ (i.e. $1\leqslant p<\ell)$, then there is only one way to write $s(i,p)$ as a product of atoms in $M(m,\ell)$. We can then define the \emph{first letter} of $s(i,p)$ as $I(s(i,p)):=a_i$, and the \emph{last letter} as $F(s(i,p)):=a_{i+p-1}$. Given an element $x\in G(m,\ell)$ which is not a power of $\Delta$, we denote $I(x)=I(\iota(x))$ and $F(x)=F(\varphi(x))$.
\item By \cite[Lemma 2.5]{GarnierRoots}, if $s(i,p),s(i',p')$ are two simple elements distinct from $\Delta$, then the product $s(i,p)s(i',p')$ is in Garside normal form if and only if $i+p\not\equiv i'[m]$. In the case where $i+p\equiv i'[m]$, then the Garside normal form of $s(i,p)s(i',p')$ is as follows:
\[s(i,p)s(i',p')=\begin{cases} s(i,p+p')&\text{if }p+p'<\ell,\\ \Delta&\text{if }p+p'=\ell,\\ \Delta s(i+\ell,p+p'-\ell)&\text{if }p+p'>\ell.\end{cases}\]
Note that if $s(i,p)s(i',p')\neq \Delta$, then $I(s(i,p)s(i',p'))=I(s(i,p))$ and $F(s(i,p)s(i',p'))=F(s(i',p'))$. 
\item If $x,y\in G(m,\ell)$ are not powers of $\Delta$, then the Garside normal form of $xy$ is the concatenation of the Garside normal forms of $x$ and of $y$ if and only if the product $F(x)I(y)$ is left-weighted. 
\item For any integer~$k$, the only rigid conjugate of $\Delta^k$ is $\Delta^k$ itself. In fact, $\Delta^k$ is actually the only element in its super-summit set. To see this, one can consider the length morphism $f\co G(m,\ell)\to \Z$ sending $s(i,p)$ to $p$. Since $f(s(i,p))<f(\Delta)$ if $p<\ell$, we have $f(x)\leqslant \ell\sup(x)$, with equality if and only if $x$ is a power of~$\Delta$. If $x$ lies in the super-summit set of~$\Delta^k$, then $f(x)=f(\Delta^k)=\ell k$ since $x$ is conjugate to~$\Delta^k$, and $\sup(x)=\sup(\Delta^k)=k$. Thus $x$ must be a power of~$\Delta$.
\item As shown in~\cite[Theorem 2.16]{GarnierRoots}, roots in circular groups are unique up to conjugacy.
\end{itemize}

\begin{lemma}\label{L:ProductInCircular}
Let $\alpha,\beta\in G(m,\ell)$ with canonical lengths $r$ and $s$, respectively, and suppose that $\alpha,\beta$ and $\alpha\beta$ are not powers of $\Delta$. If $r\geqslant s$ then $I(\alpha\beta)=I(\alpha)$. If $r\leqslant s$ then $F(\alpha\beta)=F(\beta)$.
\end{lemma}
\begin{proof}
Let $x,y,z,t$ be simple elements such that the products $xy$ and $zt$ are left-weighted and that $yz\neq \Delta$. We compute the Garside normal form of $xyzt$:
\begin{itemize}
\item If $yz$ is left-weighted, then the desired normal form is $x|y|z|t$ by definition.
\item If $yz=u$ is a simple element distinct from $\Delta$, then $F(u)=F(z)$ and $I(u)=I(y)$. This implies that the products $xu$ and $ut$ are both left-weighted. The desired Garside normal is then $x|u|t$.
\item If $yz=\Delta u$ where $u$ is a simple element distinct from $\Delta$, then $F(u)=F(z)$ and the product $ut$ is left-weighted. Moreover, we have $I(u)=a_{i+\ell}$ where $a_i=I(y)$. This implies that the product $\tau(x)u$ is left-weighted since $F(\tau(x))=a_{j+\ell}$, where $a_j=F(x)$. The desired normal form is then $\Delta\tau(x)|u|t$
\end{itemize}

Now, suppose that $r\geqslant s$. Let $\alpha=\Delta^p a_1\cdots a_r$ and $\beta=\Delta^q b_1\cdots b_s$ be written in left normal form. By hypothesis, $r>0$ and $s>0$. We set
\[h:=\max (\{0\}\cup \{i\in\intv{1,s}~|~ \tau^{q+i-1}(a_{r-i+1})b_i=\Delta\}).\]
By definition of $h$, we have $\tau^q(a_{r-h+1})\cdots \tau^q(a_r)b_1\cdots b_h=\Delta^h$. This implies that
\begin{align*}
\alpha\beta&=\Delta^pa_1\cdots a_r\Delta^qb_1\cdots b_s\\
&=\Delta^{p+q}\tau^q(a_1)\cdots \tau^q(a_r)b_1\cdots b_s\\
&=\Delta^{p+q}\tau^q(a_1)|\cdots |\tau^q(a_{r-h})\Delta^hb_{h+1}|\cdots |b_s\\
&=\Delta^{p+q+h}\tau^{q+h}(a_1)|\cdots |\tau^{q+h}(a_{r-h})b_{h+1}|\cdots |b_s.
\end{align*}

We cannot have $h=r=s$ since this would imply that $\alpha\beta=\Delta^{p+q+h}$ is a power of $\Delta$. 
\begin{itemize}
\item If $h=s$, then $s<r$ and the left normal form of $\alpha\beta$ is $\Delta^{p+q+h}\tau^{q+h}(a_1)\cdots \tau^{q+h}(a_{r-h)}$. Since $r-h\geqslant 1$, we have $I(\alpha\beta)=I(\alpha)$.
\item If $h<s\leqslant r$, then we can apply the first part of the proof to the product\\ $\tau^{q+h}(a_{r-h-1})|\tau^{q+h}(a_{r-h})b_{h+1}|b_{h+2}$. In each case we obtain that $I(\alpha\beta)=I(\alpha)$.
\end{itemize}

Now, the last claim follows by symmetry, taking into account that the left and right normal forms of an element in a circular group coincide, up to sliding the $\Delta$ factors to the left or to the right.
\end{proof}

We are now ready to prove:

{\bf Theorem~\ref{T:main}(a)} \; 
{\sl Conjecture~\ref{C:MainConjecture2} and the Main Conjecture~\ref{C:MainConjecture} hold in the circular group $G(m,\ell)$. 
Specifically, $\mathcal P_{G(m,\ell)}=\{1\}$ and $\calR_{G(m,\ell)}=\frac{m}{m\wedge \ell}$.}
\medskip

\begin{proof}
We will first prove Conjecture~\ref{C:MainConjecture2}. 
Let $x$ be a rigid element of~$G(m,\ell)$, and $y$ a conjugate of $x$ with a rigid power. 

First case: we suppose that some power of $y$ belongs to $\langle \Delta\rangle$, say $y^p=\Delta^q$. As we noted in the beginning of this section, we have $SC(\Delta^q)=\{\Delta^q\}$. Since $x$ is a rigid conjugate of $y$, the element $x^p$ is a rigid conjugate of $y^p=\Delta^q$, but there is a unique rigid conjugate of $\Delta^q$, hence $x^p=\Delta^q$. Now, if a rigid element $x$ is not a power of $\Delta$, no power of $x$ can be a power of $\Delta$ (we have $\ell_{can}(x^k)=k\ell_{can}(x)$ so $\ell_{can}(x^k)$ can never be $0$). It follows that $x$ must be a power of $\Delta$. Hence $x=\Delta^t$ (where $tp=q)$.

Now, $y$ is conjugate to $x=\Delta^t$, so $y=\alpha^{-1}\Delta^t\alpha$ for some $\alpha$. Since $z=\Delta^{m/m\wedge \ell}$ is central in $G(m,\ell)$ by \cite[Corollary 2.11]{GarnierRoots}, the element $x^{m/m\wedge \ell}=z^t$ is also central, and thus $y^{m/m\wedge \ell}=x^{m/m\wedge \ell}=\Delta^{tm/m\wedge \ell}$ is rigid.   

Second case: we suppose now that no power of $y$ belongs to $\langle \Delta\rangle$. We claim that $I(y^n)=I(y)$ and $F(y^n)=F(y)$ for every $n>0$. The proof will be by induction; the case $n=1$ is trivial, so let us assume that $n>1$ and that the claim holds for smaller values of $n$.

We have $y^n=y^{n-1}y=yy^{n-1}$. By induction hypothesis, $I(y^{n-1})=I(y)$ and $F(y^{n-1})=F(y)$. Since $y,y^{n-1}$ and $y^n$ are not powers of $\Delta$, we can apply Lemma \ref{L:ProductInCircular} to obtain that, if $\ell_{can}(y^{n-1})\geqslant \ell_{can}(y)$, then $I(y^n)=I(y^{n-1}y)=I(y^{n-1})=I(y)$ and $F(y^n)=F(yy^{n-1})=F(y^{n-1})=F(y)$. Similarly, if $\ell_{can}(y^{n-1})\leqslant \ell_{can}(y)$, then $I(y^n)=I(yy^{n-1})=I(y)$ and $F(y^n)=F(y^{n-1}y)=F(y)$. This covers all cases, so the claim is shown.

Recall that an element $\alpha\in G(m,\ell)$ is rigid if and only if $F(\alpha)$ and $I(\alpha)$ are not consecutive atoms. In our case, we know that for some $n>0$ such that $y^n$ is rigid, the atoms $F(y^n)$ and $I(y^n)$ are not consecutive. But since $I(y^n)=I(y)$ and $F(y^n)=F(y)$, this implies that $y$ is also rigid.

Thus the proof of Conjecture~\ref{C:MainConjecture2} is complete, with $\calR_{G(m,\ell)}=\frac{m}{m\wedge \ell}$. Since roots in circular groups are unique up to conjugacy, this implies, in principle, the truth of Main Conjecture~\ref{C:MainConjecture} in this framework (by Proposition~\ref{P:ImplicConjectures}). However, it does not yield the desired bound of $\mathcal P_{G(m,\ell)}=\{1\}$. 


In order to obtain this bound, we take $x$ to be a rigid element of~$G(m,\ell)$, and we have to show that $|SC(x^n)|$ does not depend on~$n$. If $x$ is a power of~$\Delta$, then this has already been discussed in the beginning of this section. Assume now that $x$ is not a power of $\Delta$. By Lemma \ref{L:PowersAndConjGraphs}, we know that the map $\pi^n:y\mapsto y^n$ induces an injection from $SC(x)$ to $SC(x^n)$. It is then sufficient to show that $\pi^n$ is  also surjective. Let $z\in SC(x^n)$, and let $\alpha$ be such that $\alpha x^n\alpha^{-1}=z$. The element $z'=\alpha x\alpha^{-1}$ is a conjugate of the rigid element $x$ which admits a rigid power $z$. Since $x$ is not a power of $\Delta$, we are in the second case of the first part of the proof, and thus $z'$ is rigid. We then have $z'\in SC(x)$ and $\pi^n(z')=z$ as we wanted to show.
\end{proof}

An interesting byproduct of the above proof is that the only case in which $y$ (conjugate to a rigid element) is not rigid but has a rigid power happens when $y$ is a non-rigid conjugate to a power of $\Delta$. Moreover, we showed that $r(y)\leqslant \frac{m}{m\wedge \ell}$ in this case. Here is one more observation:

\begin{lemma}
The bound $\calR_{G(m,\ell)}=\frac{m}{m\wedge \ell}$ is optimal.
\end{lemma}
\begin{proof}
Consider the element $y=s(1,\ell-1)a_0=s(1,\ell-1)s(0,1)$. It is equal to $a_0^{-1}\Delta a_0$, and we claim that $r(y)=\frac{m}{m\wedge \ell}$. 

If $m$ divides $\ell$, then $\frac{m}{m\wedge \ell}=1$. We have $1+\ell-1=\ell\equiv 0[m]$, and thus $y=s(1,\ell)=\Delta$. If $m$ does not divide $\ell$, then $\ell\not\equiv 0[m]$ and the Garside normal form of $y$ is $s(1,\ell-1)|s(0,1)$. Since $s(0,1)s(1,\ell-1)=\Delta$, we have $y^n=s(1,\ell-1)\Delta^{n-1}s(1,0)=\Delta^{n-1}s(1+\ell(n-1),\ell-1)s(1,0)$ for $n>0$. The element $y^n$ is then rigid if and only if it is a power of $\Delta$, which is itself equivalent to the product $s(1+\ell(n-1),\ell-1)s(1,0)$ not being left-weighted. This is again equivalent to
\[1+\ell(n-1)+\ell\equiv 1[m]\; \Leftrightarrow \;  n\ell\equiv0[m].\]
The smallest $n$ for which this holds is $n=\frac{m\vee \ell}{\ell}=\frac{m}{m\wedge \ell}$ as we wanted to show.
\end{proof}


\subsection{The four-strand braid group with the dual Garside structure}\label{SS:ProofForB4*}

We now turn our attention to~$\calB_4^*$, the 4-strand braid group with its dual Garside structure. In this group, the simple elements correspond to non-crossing partitions of $4$ elements, that can be represented as
$$
\{\triv\} \cup \{\west,\north,\east,\south,\mdiag,\adiag\} \cup
\{\sw,\nw,\ne,\se,\ns,\ew\} \cup \{\square\}.
$$
Here, $\triv$ is the trivial element, the second subset is the set of atoms, and the third one is the set of simple elements of weight~$2$, i.e.\ elements that can be written as products of two atoms (in several different ways, e.g.\ $\sw = \west\mdiag = \mdiag\south = \south\west$); 
finally the Garside element $\delta=\square$ has weight~$3$, i.e. it and can be written as a product of three atoms. There are 16 different ways to do so: $\square=\west\east\mdiag=\west\north\east=\north\east\south=...$.

\begin{lemma}\label{L:NoMixing}
Suppose $G$ is a Garside group, equipped with a Garside structure where the Garside element is of weight~3. Suppose the normal form of a rigid element $x$ of~$G$ contains a letter of weight~$1$ \emph{and} a letter of weight~$2$. Then the conjugacy graph of $x$ (and of $x^n$ for any integer~$n$) consists of only one vertex.
\end{lemma}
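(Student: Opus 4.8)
The plan is to reduce everything to a single well-chosen representative in the orbit $[x]$, and then to show that from that representative the conjugacy graph admits no arrow leaving the vertex. First I would record two elementary facts about a weight-$3$ structure: every non-trivial proper simple element (in particular every factor occurring in a normal form) has weight $1$ or $2$, and the automorphism $\tau$ preserves weights, since it permutes the atoms. I would also reduce the statement for $x^n$ to the statement for $x$: because $x$ is rigid, the factors of $x^n$ are exactly the $\tau$-shifts of the factors of $x$, each repeated $n$ times, so $x^n$ again contains both a weight-$1$ and a weight-$2$ factor and the same argument applies verbatim.

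The heart of the argument is the choice of representative. The vertex $[x]$ is the orbit of $x$ under cycling and $\tau$, and cycling runs through all cyclic rotations of the factors (up to weight-preserving $\tau$-shifts). Reading the weights of the factors of $x$ as a cyclic word over $\{1,2\}$ in which both symbols occur, there must be a cyclically adjacent pair of the form (weight $2$, weight $1$); rotating so as to begin at that weight-$1$ factor produces a representative $y$ of $[x]$ with $\iota(y)$ of weight $1$ and $\phi(y)$ of weight $2$.

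I would then bound the arrows out of $y$ by a weight count at its two ends. A black arrow is a conjugation by some $c\preccurlyeq\iota(y)$; since $\iota(y)$ is an atom, the only non-trivial choice is $c=\iota(y)$, which is cycling and hence a loop at $[y]$. A gray arrow is a conjugation by some $c\preccurlyeq\partial\phi(y)$; since $\phi(y)$ has weight $2$, its complement $\partial\phi(y)=\phi(y)^{-1}\Delta$ is an atom, so the only non-trivial choice is $c=\partial\phi(y)$. For this $c$ one computes $c^{-1}yc=\tau\bigl(\phi(y)\,y\,\phi(y)^{-1}\bigr)$, that is, $\tau$ applied to the decycling of $y$; as decycling is the inverse operation to cycling (one checks directly that $\phi(\mathbf{c}(y))=\iota(y)$, whence decycling undoes cycling), this element again lies in the orbit $[y]$. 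Thus from the representative $y$ every black and every gray arrow is a loop. By the representative-independence of arrow multiplicities (the paragraph following Definition~\ref{D:ConjugacyGraph}), the vertex $[x]=[y]$ therefore has no edge to any other vertex; since conjugacy graphs are connected (Remark~\ref{R:DiffDefnsConjGraph}), the graph of $x$, and by the reduction also that of $x^n$, consists of the single vertex $[x]$.

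The step I expect to require the most care is the gray-arrow computation, specifically the verification that conjugating by $\partial\phi(y)$ lands in the orbit $[y]$ even when $\inf(y)\neq0$: there one must keep track of the $\tau$-shifts carefully and use that decycling equals inverse cycling \emph{as a group operation}, so that $\tau$ composed with decycling remains inside the cycling-$\tau$ orbit. The remaining ingredients — the weight dichotomy for simple factors, the $\tau$-invariance of weight, and the purely combinatorial existence of a $2\to1$ descent in a cyclic binary word containing both symbols — are routine.
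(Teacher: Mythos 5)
Your proof is correct and follows essentially the same route as the paper: use the weight-$3$ hypothesis to make $\partial\phi$ and $\iota$ atoms, conclude that no gray or black arrow can leave the vertex of $x$, and invoke connectivity of the conjugacy graph. The only (cosmetic) difference is that the paper cycles twice to two \emph{separate} representatives — one with final factor of weight $2$ to kill gray arrows, one with initial factor of weight $1$ to kill black arrows, using representative-independence of the arrows — whereas you locate a single representative with both properties and explicitly check that conjugation by the full atom is a loop; both are valid.
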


\begin{proof} Possibly after applying the cycling operation a few times, we can assume that the final factor of~$x$ is has weight~$2$. This implies that $\partial \varphi(x)$ is an atom. Now we recall that any gray arrow in the conjugacy graph leaving the vertex represented by~$x$ is given by conjugation by an element of 
$$
\mathcal C_x=\{c\in G | 1\prec c\prec \partial\phi(x) \}.
$$
Hence, in our situation, this set is empty. We conclude that no gray arrow can leave the vertex represented by~$x$.

Similarly, after some cycling, the initial factor of~$x$ is of weight~$1$, that is, $\iota(x)$ consists of only one atom, and no black arrow can exit the vertex represented by~$x$. 

Since the conjugacy graph of~$x$ is known to be connected \cite{Birman-Gebhardt-GM2}, we conclude that it has only one vertex.
\end{proof}

\begin{remark}\label{R:OnlyWeight1}
Lemma~\ref{L:NoMixing} is very useful for proving Conjecture~\ref{C:MainConjecture} (but not Conjecture~\ref{C:MainConjecture2}) in Garside groups where the Garside element is of weight~$3$. Indeed, when studying the sizes of conjugacy graphs in such groups, we can restrict our attention to rigid elements~$x$ whose normal form contains no letters of weight~2, i.e. only $\Delta$ and letters of weight~1.
(The study of elements whose normal form contains no letters of weight~1 can be reduced to the opposite case -- indeed, if $x$ is rigid and has no letters of weight~1, then $x^{-1}$ is rigid and has no letters of weight~2, and $SC(x^{-1})$ is the set of inverses of the elements in $SC(x)$. Since the operation of taking inverses of rigid elements sends orbits under cycling and $\tau$ to orbits under cycling and $\tau$, it suffices to show the result for $x^{-1}$.)
\end{remark}

We need some more general theory.
The theme of this paper is that, after conjugating a rigid braid by a prefix of the first factor (or by a prefix of the complement of the last factor), we may obtain a braid which is not rigid, but which becomes rigid if we raise it to some power. The following lemma places a bound on the required power. Indeed, a bound is given by the number of strict, non-trivial prefixes of the complement of any of the factors of~$x$:

\begin{lemma}[Limit on rigid power]\label{L:LimitOnPower}
Suppose $x$ is a rigid element of a Garside group~$G$. Consider the set 
$$\mathcal C_x=\{c\in G | 1\prec c\prec \partial\phi(x) \}$$
Suppose that for some element $c_*$ of~$\mathcal C_x$ the conjugate $c_*^{-1}x c_*$ has a rigid power. Let $\rho$ be the smallest such power, i.e. $\rho=r(c_*^{-1}x c_*)$ (using Notation~\ref{N:SmallestRigidPower}).
Then there is a subset $\tilde{\mathcal C}$ of~$\mathcal C_x$ which has precisely $\rho$ elements such that for every $c$ belonging to~$\tilde{\mathcal C}$, the element $c^{-1}x^\rho c$ is rigid.
In particular, $\rho\leqslant |\mathcal C_x|$.
\end{lemma}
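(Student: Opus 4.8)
The plan is to package all the relevant conjugators into the orbit of a single \emph{return map} (a transport map, in the sense of Gebhardt--González-Meneses) acting on the finite set of prefixes of $\partial\phi(x)$, and then to take $\tilde{\mathcal C}$ to be the forward orbit of $c_*$ under this map. Write the normal form of $x$ as $x=\Delta^p x_1|\cdots|x_\ell$, so that $\phi(x)=x_\ell$ and, by rigidity, the normal form of $x^k$ is the $k$-fold concatenation of that of $x$ with the $\Delta$-factors slid to the left; in particular $\phi(x^k)=\phi(x)$ and $\partial\phi(x^k)=\partial\phi(x)$ for every $k\geqslant 1$, so that $c_*\in\mathcal C_x$ is a legitimate gray-conjugator for $x^k$ as well.

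For any simple $c$ with $1\preccurlyeq c\preccurlyeq\partial\phi(x)$, I would run the backward domino computation of Figure~\ref{F:DominoDiagramInfNeq0} through a single copy of $x$: starting from $d_\ell=x_\ell\cdot c$ (simple because $c\preccurlyeq\partial\phi(x)$) and sliding to the left end, one obtains an output conjugator $c_0=x_\ell^{-1}d_0$, which again satisfies $x_\ell c_0=d_0\preccurlyeq\Delta$ and hence is a prefix of $\partial\phi(x)$; after the $\tau^{-p}$-bookkeeping of the final step this defines a self-map $\Phi$ of the finite set $\{c:1\preccurlyeq c\preccurlyeq\partial\phi(x)\}$. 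The Lemma preceding Figure~\ref{F:DominoDiagramInfNeq0} (i.e.\ Proposition~2.1 of~\cite{Gebhardt}) says precisely that the conjugation $c^{-1}xc$ closes up, equivalently that $c^{-1}xc$ is rigid, if and only if $\Phi(c)=c$. Since the normal form of $x^k$ consists of $k$ consecutive copies of that of $x$, sliding a conjugator through $x^k$ is the $k$-fold iteration of the one-copy transfer, so the conjugator reaching the left end is $\Phi^k(c)$ and
\[
c^{-1}x^k c \text{ is rigid} \iff \Phi^k(c)=c .
\]
Moreover $\Phi(1)=1$ since $1^{-1}x\,1=x$ is rigid, and $\Phi(\partial\phi(x))=\partial\phi(x)$ since conjugating $x$ by $\partial\phi(x)=x_\ell^{-1}\Delta$ merely decycles the normal form:
\[
\partial\phi(x)^{-1}\,x\,\partial\phi(x)=\tau\bigl(\Delta^{p}\,\tau^{p}(x_\ell)\,x_1\cdots x_{\ell-1}\bigr),
\]
which is again rigid.

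Granting these tools, the conclusion is a short orbit count. Set $y=c_*^{-1}x c_*$; the hypothesis $\rho=r(y)$ says $\rho$ is the least positive integer with $y^\rho=c_*^{-1}x^\rho c_*$ rigid, which by the displayed equivalence is the least $k$ with $\Phi^k(c_*)=c_*$. Thus $\rho$ equals the length of the $\Phi$-cycle through $c_*$, so that
\[
\tilde{\mathcal C}=\{\,c_*,\ \Phi(c_*),\ \ldots,\ \Phi^{\rho-1}(c_*)\,\}
\]
consists of exactly $\rho$ pairwise distinct elements, on which $\Phi$ acts as a $\rho$-cycle. Each $c=\Phi^i(c_*)$ satisfies $\Phi^\rho(c)=\Phi^i(\Phi^\rho(c_*))=c$, hence $c^{-1}x^\rho c$ is rigid. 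Finally I would check $\tilde{\mathcal C}\subseteq\mathcal C_x$: no orbit element can equal $1$ or $\partial\phi(x)$, for otherwise the orbit would be absorbed at that fixed point of $\Phi$ and could never return to $c_*$ (which lies in $\mathcal C_x$, hence is neither $1$ nor $\partial\phi(x)$), contradicting $\Phi^\rho(c_*)=c_*$. So every element of $\tilde{\mathcal C}$ is a strict, non-trivial prefix of $\partial\phi(x)$, i.e.\ lies in $\mathcal C_x$, and $\rho=|\tilde{\mathcal C}|\leqslant|\mathcal C_x|$.

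The only genuinely delicate part, which I expect to be the main obstacle, is making the transfer map $\Phi$ fully rigorous in two respects: (i) confirming that the single-copy backward domino computation yields a well-defined prefix of $\partial\phi(x)$, with the $\tau^{-p}$-correction of the last step handled correctly when $\inf(x)\neq 0$; and (ii) justifying that sliding through the concatenation $x^k$ is \emph{exactly} the $k$-th iterate of the one-copy transfer, since the successive copies in the normal form of $x^k$ differ by powers of $\tau$ and one must either absorb these twists into the definition of $\Phi$ or track them explicitly and verify, using that $\tau$ is a lattice automorphism commuting with the domino rule, that they cancel around the closed-up diagram. Both points are contained in, or follow routinely from, the statement and proof of Proposition~2.1 of~\cite{Gebhardt} together with the right domino rule of~\cite{DehornoyQuadrNorm}; once they are in place, the remainder is the purely combinatorial orbit count above.
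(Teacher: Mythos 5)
Your proposal is correct and follows essentially the same route as the paper: the paper's proof reads the conjugators $c_0,c_\ell,\dots,c_{\rho\ell}$ off the domino diagram for $x^\rho$, and its accompanying remark recasts these exactly as your iterated one-copy transfer map $\Phi$ (the iterated $\ell$-transports of Gebhardt), with the orbit of $c_*$ being your $\tilde{\mathcal C}$. Your fixed-point argument for why the orbit avoids $1$ and $\partial\phi(x)$ is a slightly different (and perfectly valid) justification of the membership $\tilde{\mathcal C}\subseteq\mathcal C_x$ than the paper's appeal to transport preserving the prefix order.
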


There is, of course, an equivalent statement for conjugations of~$x$ by prefixes of~$\iota(x)$. 

\begin{example}\label{E:BoundOnPower}
(a) In Example~\ref{E:SimpleExInB4classical}, there are two different conjugations of $x^2$, visible as red arrows in Figure~\ref{F:ConjugationInB4easy}; indeed, $x^2$ can be conjugated by~$1$ to the rigid braid $21.12.2132.2132.23.32$, and by~$3$ to $2132.23.32.21.12.2132$, which is a cyclic conjugate of the previous word. These two conjugations are indicated by the label ``$\times 2$'' on the arrow originating in the vertex~$x^2$ in Figure~\ref{F:ConjGraphB4easy}.

(b) In Example~\ref{E:ConjugationInB4dPer2}, there are two different conjugations of~$x^2$, one by $\west$ and one by $\east$ -- again, these two are visible as red arrows in Figure~\ref{F:ConjugationInB4dPer2}. 

(c) Exactly the same behaviour as in Example~\ref{E:ConjugationInB4dPer2}  can be observed in Example~\ref{E:ConjugationInB4dWithInf} -- see Figure~\ref{F:ConjugationInB4dWithInf}.

(d) In Example~\ref{E:ConjugationInB4dPer3}, the braid $x^3$ can be conjugated by three different gray arrows (corresponding to conjugations by $\north$, $\east$ and $\mdiag$) to three braids that are cyclic conjugates of each other.

(e) In Example~\ref{E:TheBigOne} (Figure~\ref{F:TheBigOne}), whenever we have an arrow from a vertex that appears in the conjugacy graph of~$x^n$ to a vertex that appears in the conjugacy graph of~$x^N$, with $n|N$, then the corresponding edge is labelled $\times \frac{N}{n}$.
\end{example}

\begin{proof}[Proof of Lemma~\ref{L:LimitOnPower}]
We look at the domino diagram for calculating the normal form of $c_*^{-1}x^\rho c_*$. Let us denote by $\ell=\ell_{can}(x)$, the canonical length of~$x$. 

We start with the $\rho\cdot \ell$ horizontal arrows whose labels spell out the normal form of~$x^\rho$, see Figure~\ref{F:DominoDiagramInf0} and Figure~\ref{F:ConjugationInB4easy} for an example. (We have to take $\rho\cdot (\ell+1)$ arrows if $\inf(x)\neq 0$, see Figure~\ref{F:DominoDiagramInfNeq0} and Figure~\ref{F:ConjugationInB4dWithInf} for an example.) Also, we have a vertical arrow labelled $c_*=c_{\rho\cdot \ell}$ at the right end of the diagram.

We then construct the full domino diagram from right to left, calculating $c_{\rho\cdot \ell-1}, c_{\rho\cdot \ell-2}$ etc. We will be interested in the subsequence $c_{\rho\cdot\ell}, c_{(\rho-1)\cdot\ell},\ldots,c_{2\cdot\ell}, c_{\ell}, c_{0}$, corresponding to the red arrows in our examples. By minimality of~$\rho$, the terms of this subsequence are pairwise distinct, except that $c_{\rho\cdot\ell}=c_0$. These are our $\rho$ distinct conjugators. We remark that the $\rho$ elements $c_0^{-1}x^\rho c_0, \ldots, c_{\rho\cdot\ell}^{-1}x^\rho c_{\rho\cdot\ell}$ are all in the same orbit under the cycling operation. Since the last of them, $c_{\rho\cdot\ell}^{-1}x^\rho c_{\rho\cdot\ell} = c_*^{-1} x^\rho c_*$, is rigid by hypothesis, they are all rigid.
\end{proof}

\begin{remark}
Another way to show Lemma~\ref{L:LimitOnPower} is to notice that the elements $c_{\ell},c_{2\cdot \ell},c_{3\cdot \ell},\ldots$ are the \emph{iterated $\ell$-transports} of $c$ for cycling corresponding to the rigid element $x^\rho$ (see~\cite{Gebhardt}, where they are defined and denoted $c^{(\ell)}, c^{(2\cdot \ell)}, c^{(3\cdot \ell)},\ldots$) Since $c\preccurlyeq \partial\varphi(x^\rho)$, and iterated transport preserves the prefix order and sends the complement of the final factor of an element to the complement of the final factor of its cycling, all the iterated $\ell$-transports of $c$ belong to $\mathcal C_x$. The first iterated $\ell$-transport which equals $c$ corresponds to the smallest power of $y$ which is rigid, so there are exactly $\rho$ distinct  elements, all of them in $\mathcal C_x$.
\end{remark}

\begin{lemma}\label{L:SumOfEdgeLabels}
Let $x$ be a rigid element of~$G$. Look at any vertex represented by some element $y$ in the conjugacy graph of $x^n$, for any~$n$. Look at the labels of the outgoing gray edges from that vertex, including those labelled ''$\times 1$'' (for which by convention we didn't write down the label). Then the sum of those labels is at most $|\mathcal{C}_y|$, where $\mathcal{C}_y = \{c\in G | 1\prec c\prec \partial\phi(y) \}$.
\end{lemma}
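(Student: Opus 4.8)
The plan is to identify the sum in the statement with a count of conjugators and then to pin down exactly which conjugators can occur. We may assume $\ell_{can}(y)>0$ (otherwise $y$ is a power of $\Delta$, there are no gray arrows, and there is nothing to prove), so that $\phi(y)$ is a proper non-trivial simple element and $1\prec\partial\phi(y)\prec\Delta$. By the definition of the conjugacy graph, a gray arrow from the vertex of $y$ to the vertex of some $z$ carrying the label ``$\times k$'' records exactly $k$ distinct simple elements $c\preccurlyeq\partial\phi(y)$ with $c^{-1}yc$ rigid and lying in the orbit of $z$; since $c^{-1}yc$ determines a single orbit, each such $c$ feeds exactly one outgoing gray arrow. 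Hence the sum $S$ of all outgoing gray labels equals the number of \emph{valid} conjugators, that is, of simple elements $c\preccurlyeq\partial\phi(y)$ with $c^{-1}yc$ rigid and in an orbit distinct from that of~$y$.

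Since every valid $c$ is a prefix of $\partial\phi(y)$, the set of valid conjugators is contained in the set $P$ of all simple prefixes of $\partial\phi(y)$, and $P=\mathcal{C}_y\cup\{1,\partial\phi(y)\}$ with the two adjoined elements genuinely outside $\mathcal{C}_y$. So it suffices to prove that neither $1$ nor $\partial\phi(y)$ is valid. The case $c=1$ is immediate, since $1^{-1}y\,1=y$ lies in the orbit of $y$. The heart of the argument is the case $c=\partial\phi(y)$. Writing the normal form as $y=\Delta^{p}y_1|\cdots|y_r$, so that $\phi(y)=y_r$ and $\partial\phi(y)=y_r^{-1}\Delta$, I would compute directly
$$
(\partial\phi(y))^{-1}\,y\,\partial\phi(y)=\Delta^{-1}y_r\,\Delta^{p}y_1\cdots y_{r}\,y_r^{-1}\Delta=\Delta^{p}\,\tau^{p-1}(y_r)\,\tau^{-1}(y_1)\cdots\tau^{-1}(y_{r-1}),
$$
using $y_r\Delta^{p}=\Delta^{p}\tau^{p}(y_r)$ and $w\Delta=\Delta\tau^{-1}(w)$. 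This word is precisely of the form $\Delta^{p}\tau^{k}(y_j)|\cdots|\tau^{k}(y_r)|\tau^{k-p}(y_1)|\cdots|\tau^{k-p}(y_{j-1})$ recalled earlier (with $j=r$ and $k=p-1$), which is exactly the description of the elements in the orbit of $y$ under cycling and $\tau$; by rigidity of $y$ this expression is already in normal form. Hence $c=\partial\phi(y)$ conjugates $y$ back into its own orbit and is not valid.

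Combining these two exclusions shows that the valid conjugators form a subset of $\mathcal{C}_y=P\setminus\{1,\partial\phi(y)\}$, so $S\leqslant|\mathcal{C}_y|$, which is the assertion. The only genuinely non-routine step is the computation for $c=\partial\phi(y)$: one must track the powers of $\tau$ carefully when sliding the Garside factors past $\Delta$, and then invoke rigidity to conclude that the resulting word is in normal form, so that it can be recognised letter by letter as a member of the orbit of $y$ rather than merely being \emph{conjugate} to one. Everything else is bookkeeping about what the edge labels count. Note that we never need injectivity of $c\mapsto c^{-1}yc$: even if distinct prefixes produced the same conjugate, the count of valid $c$'s, and hence $S$, could only decrease, so the bound by $|\mathcal{C}_y|$ is unaffected.
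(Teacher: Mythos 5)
Your proof is correct and follows essentially the same route as the paper, which disposes of the lemma in one line via the pigeonhole principle (the outgoing gray edges represent disjoint sets of distinct conjugators, all lying in $\mathcal{C}_y$); your explicit verification that $c=1$ and $c=\partial\phi(y)$ are excluded is a reasonable elaboration of what the paper leaves implicit. One small slip: the sliding rule is $w\Delta=\Delta\tau(w)$, not $\Delta\tau^{-1}(w)$, so the conjugate should read $\Delta^{p}\tau^{p+1}(y_r)\,\tau(y_1)\cdots\tau(y_{r-1})$ --- but since the orbit is closed under $\tau^{\pm1}$ this does not affect your conclusion.
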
 

\begin{proof}
This is simply because of the pigeonhole principle: the outgoing edges represent disjoint sets of distinct conjugating elements, which all lie in~$\mathcal{C}_y$.
\end{proof}

Let us now concentrate on the braid group $\calB_4^*$. 
The sliding circuit sets and conjugacy diagrams in the case $\calB_4^*$ were already studied in \cite{CalvezWiestConjugacyB4}.

Because of 
Remark~\ref{R:OnlyWeight1}, when trying to prove our Main Conjecture~\ref{C:MainConjecture} for $\calB_4^*$, we can restrict our attention to rigid elements~$x$ whose normal form contains no letters of weight~2, but only letters of weight~1, and $\delta$.
Any rigid conjugate~$y$ of any power of such an element~$x$ has the same property: it has no letters of weight~$2$. Indeed, any power of $x$ has a normal form $\Delta^p z_1\cdots z_r$ with all $z_i$ having weight 1. Hence, if $y$ is a rigid conjugate of that element, its normal form must be $\Delta^p y_1\cdots y_r$ (same infimum and same canonical length). Since relations are homogeneous in $\calB_4^*$, the word length of $z_1\cdots z_r$ and $y_1\cdots y_r$ must coincide, therefore all letters $y_j$ must have weight 1. In particular, $\iota(y)$ has no strict nontrivial prefixes. Thus 

\begin{observation}\label{O:NoWeight2}
Suppose $x\in \calB_4^*$ is rigid, and contains no letter of weight~2. Then in the conjugacy graph of~$x^n$, for any $n$, all vertices correspond to braids which are also rigid and without letters of weight~$2$. Moreover, there are only gray arrows, no black ones.
\end{observation}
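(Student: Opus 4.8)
The plan is to verify the two assertions of the Observation in turn, relying on the homogeneity of $\calB_4^*$ and on the fact that the initial factor of a weight-$1$ braid is an atom. The whole statement should fall out as a bookkeeping consequence of these two inputs, so I do not anticipate a genuine obstacle; I spell out the two steps mainly to make the quantifier over all powers $x^n$ explicit.

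First I would treat the claim that every vertex is represented by a rigid braid without weight-$2$ letters, following the computation already carried out just above the statement. Since $x$ is rigid with all non-$\Delta$ factors of weight~$1$, and since $\tau$ preserves the weight of simple elements, each power $x^n$ is again rigid and has normal form $\Delta^p z_1\cdots z_r$ with every $z_i$ an atom. Any vertex of the conjugacy graph of $x^n$ is represented by some $y\in SC(x^n)$, hence by a rigid conjugate of $x^n$; thus $\inf(y)=p$ and $\ell_{can}(y)=r$, and the normal form of $y$ reads $\Delta^p y_1\cdots y_r$. Because the defining relations of $\calB_4^*$ are homogeneous, the total atom-length of $y_1\cdots y_r$ equals that of $z_1\cdots z_r$, namely~$r$; as each of the $r$ factors $y_j$ has weight at least~$1$ and the weights sum to~$r$, every $y_j$ must be an atom. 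Hence $y$ is rigid and free of weight-$2$ letters, as asserted.

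Second I would deduce the absence of black arrows from Definition~\ref{D:ConjugacyGraph}. A black arrow leaving the vertex of $y$ is, by definition, a conjugation by a prefix $c\preccurlyeq\iota(y)$ carrying $y$ into the orbit of a \emph{different} vertex. By the first step, $\iota(y)=\tau^{-p}(y_1)$ is a $\tau$-image of the atom $y_1$ and is therefore itself an atom; consequently there is no $c$ with $1\prec c\prec\iota(y)$. The only prefixes available are $c=1$, which fixes $y$, and $c=\iota(y)$, which is cycling and keeps $y$ in its own orbit. In neither case does $c^{-1}yc$ leave the orbit of $y$, so no black arrow can reach another vertex, and only gray arrows survive. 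The one point deserving explicit care is the remark that $\tau$ preserves weight, since this is what guarantees that passing from $x$ to $x^n$ creates no weight-$2$ factor and leaves the initial factor an atom, thereby keeping the hypotheses of the first step in force for every~$n$.
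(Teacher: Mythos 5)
Your proposal is correct and follows essentially the same route as the paper: the paper likewise deduces from rigidity, equality of infimum and canonical length on $SC(x^n)$, and homogeneity of the relations that every rigid conjugate of $x^n$ has only weight-$1$ non-$\Delta$ factors, and then concludes that $\iota(y)$ is an atom so no black arrow can leave any vertex. Your write-up merely makes explicit two points the paper leaves implicit (that $\tau$ preserves weight, and that the prefixes $c=1$ and $c=\iota(y)$ stay within the orbit), which is harmless.
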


Another key observation is that the presence of ``diagonal'' letters $\mdiag$ ou $\adiag$ imposes strong restrictions on the possible gray arrows. Indeed, diagonal letters have only two strict nontrivial prefixes to their complement, whereas non-diagonal letters have three. For instance:
$$
\{c\in G |  1\prec c\prec \partial\mdiag\} = \{\north, \south\} \text{ whereas } 
\{c\in G | 1\prec c\prec \partial\west\} = \{\north, \mdiag, \east\}
$$

Let us first prove our Main Conjecture~\ref{C:MainConjecture} in the special case where some rigid conjugate of some power of~$x$ contains no diagonal letter:

\begin{lemma}\label{L:NoDiagonals}
Let $x$ be a rigid element of~$\calB_4^*$ containing no letters of weight~2. Suppose that the normal form of some rigid conjugate of some power of~$x$ contains no diagonal letters ($\mdiag$ or $\adiag$). Then for every nonzero integer~$n$, the conjugacy graph of~$x^n$ has diameter at most~3.
\end{lemma}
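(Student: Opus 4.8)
The plan is to translate everything into a combinatorial problem about cyclic words in the six atoms and to control the gray arrows by hand. First I would apply Observation~\ref{O:NoWeight2}: every vertex of the conjugacy graph of $x^n$ is represented by a rigid braid whose non-$\delta$ factors are atoms, and all arrows are gray. Since $SC(x^{-n})$ is the set of inverses of $SC(x^n)$ and inversion carries the conjugacy graph to itself (interchanging the roles of black and gray arrows, hence preserving the underlying undirected graph and its diameter), it suffices to treat $n>0$. I would then record, once and for all, the normal-form digraph $\Gamma$ on the six atoms, where $a\to b$ means that $a|b$ is left-weighted, i.e.\ $b\not\preccurlyeq\partial a$; using $\tau$-symmetry this reads $\west\to\{\west,\south,\adiag\}$, $\south\to\{\south,\east,\mdiag\}$, $\east\to\{\east,\north,\adiag\}$, $\north\to\{\north,\west,\mdiag\}$, $\mdiag\to\{\west,\east,\mdiag,\adiag\}$, $\adiag\to\{\north,\south,\mdiag,\adiag\}$. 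A rigid weight-$1$ braid with $\inf=0$ is exactly a cyclic closed walk in $\Gamma$; cycling rotates this cyclic word by one letter and $\tau$ rotates $\Gamma$ by a quarter turn (swapping the two diagonals), while a nonzero infimum merely inserts central copies of $\delta$ and is absorbed by the $\tau$-action.

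Restricted to the non-diagonal atoms, $\Gamma$ is the oriented $4$-cycle $\west\to\south\to\east\to\north\to\west$ with a loop at every vertex, so a diagonal-free rigid braid is a closed walk that, at each step, either stays or advances by one quarter turn; the number of advances is a multiple of $4$ and its quarter is a winding number $\omega$ that I would identify with the fractional Dehn twist coefficient and hence treat as a conjugacy invariant. Next I would locate diagonal-free vertices: the hypothesis yields a diagonal-free rigid conjugate $w$ of some $x^m$; writing $w=(g^{-1}xg)^m$ and invoking Theorem~\ref{T:LatticeOfExponents}(a) together with uniqueness of roots, I extract the smallest rigid power $v^{d}\in SC(x^{d})$ of $v=g^{-1}xg$, which is again diagonal-free because $w$ is a power of it. Thus $SC(x^{n})$ carries a diagonal-free vertex whenever $d\mid n$ (take a power of $v^d$); showing that the remaining values of $n$ also carry one, or can be reduced to the previous ones, is the first point requiring care.

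The heart of the proof is the analysis of the gray arrows in terms of $\Gamma$ and the domino calculus of Section~\ref{S:Prelim}. From a vertex $y$ I would classify the minimal gray conjugators, the atoms $c$ with $1\prec c\prec\partial\varphi(y)$: when $\varphi(y)$ is non-diagonal there are three, two non-diagonal and exactly one diagonal, and when $\varphi(y)$ is diagonal there are only two, both non-diagonal (this is the computation $\{c\in G \mid 1\prec c\prec\partial\mdiag\}=\{\north,\south\}$). I would show that conjugating by a non-diagonal $c$ keeps the walk inside the non-diagonal $4$-cycle and merely reshuffles the stay/advance pattern at fixed winding, whereas the unique diagonal conjugator is the only operation that can create a diagonal letter; combined with Lemma~\ref{L:SumOfEdgeLabels}, which caps the total out-multiplicity by $|\mathcal C_y|\leqslant 3$, this pins down the local shape of the graph. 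A natural route to the bound is then to show that every vertex lies within one gray step of the diagonal-free core and that the core has diameter at most one, which would already give diameter at most $1+1+1=3$.

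The main obstacle is that this natural route is too optimistic: Example~\ref{E:ConjugationInB4dPer3} exhibits a diagonal-free vertex whose only gray neighbour is a single diagonal vertex, so some vertices genuinely lie at gray-distance $2$ or $3$ from the core and the core does not dominate the graph. The bound must therefore be extracted from a direct global analysis: one shows that the domino calculus cannot string together four gray arrows that keep increasing the distance, because a diagonal occurring as $\varphi(y)$ forces, via the size-$2$ conjugator set and the rigidity constraint $\varphi(y)|\iota(y)$, that the next conjugation either reabsorbs the diagonal or revisits a nearby configuration. Converting this qualitative mechanism into a rigorous finite case analysis over the local patterns of $\Gamma$, and separately disposing of the exponents $n$ for which $SC(x^n)$ contains no diagonal-free vertex, is where essentially all the work resides.
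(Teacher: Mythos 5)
Your proposal does not actually prove the lemma. After setting up the combinatorial model (rigid weight-one braids as closed walks in the normal-form digraph on the six atoms) and correctly observing that the naive route --- a diagonal-free core of diameter one dominating the whole graph --- is defeated by Example~\ref{E:ConjugationInB4dPer3}, you conclude that the diameter bound ``must be extracted from a direct global analysis'' and that converting the qualitative mechanism into ``a rigorous finite case analysis \ldots is where essentially all the work resides''. That case analysis \emph{is} the content of the lemma, and it is not carried out; neither is the issue you yourself flag as ``the first point requiring care'', namely the exponents $n$ for which no diagonal-free vertex of $SC(x^n)$ is exhibited. There is also a concrete error in the setup: you assert that a nonzero infimum ``merely inserts central copies of $\delta$ and is absorbed by the $\tau$-action'', but $\delta$ is not central in $\calB_4^*$ (only $\delta^4$ is), so the reduction to the infimum-zero cyclic-word picture is not automatic; it requires arranging for the infimum to be divisible by~$4$.

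For comparison, the paper's proof is short and takes a different route: by Lemma~\ref{L:PowersAndConjGraphs} the conjugacy graph of $x^n$ is contained in that of $x^{4mn}=y^{4n}$, where $y$ is the diagonal-free rigid conjugate of $x^m$ supplied by the hypothesis; the element $y^{4n}$ has only weight-one non-$\delta$ letters, no diagonal letters, and infimum congruent to $0$ modulo~$4$, and the conjugacy graphs of such elements were completely classified in \cite{CalvezWiestConjugacyB4} (at most $6$ vertices, diameter at most~$3$). In other words, the hard combinatorics you defer is exactly what is being imported from that reference, together with the power trick that makes the reference applicable. If you wish to avoid citing the classification, you must actually complete the finite case analysis you sketch; as written, the argument stops where the proof should begin.
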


\begin{proof}
Suppose that a rigid conjugate $y$ of $x^m$ contains no diagonal letter in its normal form, and consider some $n>0$.
By Lemma~\ref{L:PowersAndConjGraphs}, the conjugacy graph of~$x^n$ is contained in the conjugacy graph of $x^{4mn}$, which is the conjugacy graph of $y^{4n}$, whose infimum is congruent to 0 modulo 4. 
The conjugacy graphs of dual 4-strand braids with infimum 0, all of whose letters are of weight 1 and non-diagonal, were studied in detail in~\cite{CalvezWiestConjugacyB4}. It turns out that the conjugacy graphs have at most 6 vertices. 
It is also a fact, not explicitly stated in~\cite{CalvezWiestConjugacyB4} but an immediate consequence of the last paragraph of the proof Proposition~5.3 of that paper, that the diameter of the conjugacy graph is at most~3.
\end{proof}

To summarize, if $x$ is a rigid braid with no letters of weight 2, then the conjugacy graph has only gray arrows, which can only be labelled ``$\times 1$'', ``$\times 2$'', or ``$\times 3$'', by Lemma~\ref{L:SumOfEdgeLabels}. Now suppose that $y$ and $z$ are conjugates of $x$ having rigid powers that represent adjacent vertices in the conjugacy graph of some power of $x$, and suppose that $r(y)<r(z)$. Applying Lemma~\ref{L:LimitOnPower} to the conjugacy graph of $y^{r(y)}$, it follows that $r(z)\leqslant 3\cdot r(y)$. Moreover, if some rigid conjugate of some power of $x$ has no diagonal letters, the conjugacy graph of every power of $x$ has diameter~$d$ at most~3. 

In particular, for any conjugate of $x$ with a rigid power, we have $r(y)\leqslant 3^d\leqslant 3^3=27$.
Using the language of Section~\ref{S:Periodicity}, all the primitive elements of $\cup_{n\in\mathbb N} SC(x^n)$ lie in $\cup_{n\leqslant 27} SC(x^n)$.

We now turn to the second case, where every rigid conjugate of every power of~$x$ contains a diagonal letter.

\begin{lemma}\label{L:NoGrayArrows}
Let $x$ be a rigid element of~$\calB_4^*$ containing no letters of weight~2. Suppose that in the conjugacy graph of~$x^n$ (for some $n\in\mathbb N$) there is a gray arrow emanating from a vertex which represents an element~$y\in SC(x^n)$ whose normal form contains a diagonal letter $\mdiag$ or $\adiag$. Then for any positive integer~$d$, in the conjugacy graph of~$x^{n\cdot d}$, there are no gray arrows emanating from the vertex~$y^d$ other than those inherited via $\pi^d$ from the conjugacy graph of~$x^n$. 
\end{lemma}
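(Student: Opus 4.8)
The plan is to exploit the fact that a diagonal final factor leaves room for only two gray conjugators, and that the cyclic structure of iterated transport forces those two to behave symmetrically, so that an already-existing arrow saturates all the available room.

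First I would fix a convenient representative. By Observation~\ref{O:NoWeight2} every vertex of the conjugacy graph of $x^n$ (and of $x^{n\cdot d}$) is represented by a rigid braid with only weight-$1$ factors, and all arrows are gray. Since the normal form of $y$ contains a diagonal letter, cycling brings a diagonal letter into the final position (cycling only permutes the factors, twisting by powers of $\tau$, and $\tau$ sends diagonals to diagonals); applying $\tau$ once if necessary, I may assume $\phi(y)=\mdiag$. Then, as recorded just before Lemma~\ref{L:NoDiagonals}, $\mathcal{C}_y=\{c\mid 1\prec c\prec\partial\mdiag\}=\{\north,\south\}$ has exactly two elements. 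By the well-definedness of arrows noted after Definition~\ref{D:ConjugacyGraph}, the vertex still has an outgoing gray arrow when realized through this representative, so the set $C:=\{c\in\mathcal{C}_y\mid c^{-1}yc\text{ is rigid}\}$ is nonempty.

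Next I would feed $y$ into Lemma~\ref{L:LimitOnPower}. For each $c\in\mathcal{C}_y$ the iterated $\ell$-transports of $c$ (with $\ell=\ell_{can}(y)$) form, by that lemma, a cycle of length $r(c^{-1}yc)$ returning to $c$, all of whose members lie in $\mathcal{C}_y$; thus iterated transport partitions the two-element set $\mathcal{C}_y$ into orbits, and by Theorem~\ref{T:LatticeOfExponents}(a) one has, for every $m$, that $c^{-1}y^mc=(c^{-1}yc)^m$ is rigid precisely when $r(c^{-1}yc)\mid m$, i.e.\ precisely when the transport orbit of $c$ has length dividing $m$. Since $|\mathcal{C}_y|=2$ there are only two possibilities: either both orbits are singletons, so $r(\north^{-1}y\north)=r(\south^{-1}y\south)=1$ and both conjugates of $y$ are rigid; or $\mathcal{C}_y$ is a single $2$-cycle, so $r(\north^{-1}y\north)=r(\south^{-1}y\south)=2$ and neither conjugate of $y$ is rigid. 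The latter would give $C=\emptyset$, contradicting the previous paragraph. Hence $C=\mathcal{C}_y=\{\north,\south\}$.

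Finally I would transfer this to $y^d$. Rigidity of $y$ gives $\phi(y^d)=\phi(y)=\mdiag$, hence $\mathcal{C}_{y^d}=\mathcal{C}_y=\{\north,\south\}$, and since each transport orbit is a singleton, $c^{-1}y^dc=(c^{-1}yc)^d$ is rigid for both $c\in\{\north,\south\}$. Now take any gray arrow out of $y^d$, say conjugation by some $c\in\mathcal{C}_{y^d}=\{\north,\south\}$ to a distinct vertex; as $c^{-1}yc$ is rigid, $c^{-1}y^dc$ lies in the orbit $\pi^d([c^{-1}yc])$, which by the injectivity of $\pi^d$ on vertices (Lemma~\ref{L:PowersAndConjGraphs}) is distinct from $[y^d]$ exactly when $[c^{-1}yc]\neq[y]$, in which case the arrow is the $\pi^d$-image of the arrow $y\to c^{-1}yc$ of the graph of $x^n$ and hence inherited. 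Thus every gray arrow out of $y^d$ is inherited and no new ones appear; the case $\phi(y)=\adiag$, where $\mathcal{C}_y=\{\west,\east\}$, is identical after applying $\tau$. The step I expect to be most delicate is the transport bookkeeping of the middle paragraph: one must be sure that the iterated $\ell$-transports of a conjugator genuinely return to their starting point and stay inside $\mathcal{C}_y$, so that they partition this size-$2$ set into honest cycles -- this is exactly what the domino-diagram proof of Lemma~\ref{L:LimitOnPower} provides. Once that is secured, the bound $|\mathcal{C}_y|=2$ forced by the diagonal letter (reinforced, if one prefers, by the pigeonhole count $\sum(\text{labels})\leqslant|\mathcal{C}_{y^d}|=2$ of Lemma~\ref{L:SumOfEdgeLabels}) does the rest.
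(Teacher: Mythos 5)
There is a genuine gap in your middle paragraph. You assert that \emph{every} $c\in\mathcal{C}_y$ lies in a transport cycle of length $r(c^{-1}yc)$, and from the resulting dichotomy (two singleton orbits versus one $2$-cycle) you deduce $C=\mathcal{C}_y$, i.e.\ that \emph{both} elements of $\mathcal{C}_y$ conjugate $y$ to a rigid element. But Lemma~\ref{L:LimitOnPower} only describes the transport orbit of a conjugator $c_*$ for which $c_*^{-1}yc_*$ \emph{has a rigid power}; for a $c$ whose conjugate has no rigid power at all, $r(c^{-1}yc)$ is undefined and the lemma produces no cycle. Hence the case ``one conjugator giving a rigid conjugate, the other giving a conjugate with no rigid power'' is not excluded by your dichotomy, and in that case $C=\{\north\}\subsetneq\mathcal{C}_y$. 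Nothing in the hypotheses forbids this (indeed the paper's examples, e.g.\ Figures~\ref{F:ConjGraphB4easy} and~\ref{F:ConjGraphB4dPer2}, show vertices from which only one of the available prefixes of $\partial\phi$ yields a rigid conjugate), so the intermediate claim $C=\mathcal{C}_y$ is both unjustified and false in general; your final paragraph, which invokes ``$c^{-1}yc$ is rigid'' for an \emph{arbitrary} $c\in\{\north,\south\}$, rests on it. A smaller instance of the same issue: the inference ``singleton transport orbit $\Rightarrow r=1 \Rightarrow$ rigid'' again presupposes that $r$ is defined.

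The argument is repairable, and the repair is essentially the paper's proof. Start from an arbitrary gray arrow out of $y^d$, given by some $c$ with $c^{-1}y^dc$ rigid; for \emph{this} $c$ the element $c^{-1}yc$ does have a rigid power, so $\rho:=r(c^{-1}yc)$ is defined and divides $d$, and Lemma~\ref{L:LimitOnPower} produces $\rho$ pairwise distinct conjugators inside $\mathcal{C}_y$, forcing $\rho\leqslant 2$. If $\rho=2$, the transport orbit of $c$ would be all of $\mathcal{C}_y=\{\north,\south\}$; but by hypothesis some $c''\in\mathcal{C}_y$ satisfies $(c'')^{-1}yc''$ rigid, and such a $c''$ is a fixed point of the $\ell$-transport (this is the unnumbered lemma of Section~\ref{S:Prelim}, giving $c_0=c_\ell=c''$), so it cannot lie in a $2$-cycle --- a contradiction. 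Hence $\rho=1$ and the arrow is inherited. This is exactly the paper's counting argument ($d$ new conjugators plus one inherited conjugator cannot exceed $|\mathcal{C}_{y^d}|=2$ unless $d=1$); note that the fixed-point property of the inherited conjugator is also what guarantees, in the paper's count, that it is distinct from the $d$ new ones.
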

\begin{proof}
After cycling, we can assume that the last letter of~$y$ is a diagonal letter. In that case, $\partial \phi(y)$ has only two strict prefixes, so only two gray arrows (counted with multiplicity) can emanate from the vertex~$y$ in the conjugacy graph of~$x^n$, or from the vertex~$y^d$ in the conjugacy graph of $x^{n\cdot d}$. 
If $d$ was the smallest integer so that a new gray arrow appeared, emanating from the vertex $y^d$ in the conjugacy graph of~$x^{n\cdot d}$, then by Lemma~\ref{L:LimitOnPower}, there are $d$ copies of this gray arrow. Also, one further gray arrow is inherited via $\pi^d$ from the conjugacy graph of~$x^n$, by hypothesis. In summary, we'd have $d+1$ gray arrows emanating from the vertex~$y^d$. Thus $d=1$.
\end{proof}

As an immediate consequence of Observation~\ref{O:NoWeight2} and Lemma~\ref{L:NoGrayArrows} we have:

\begin{lemma}\label{L:NoFurtherPowers}
Let $x$ be a rigid element of~$\calB_4^*$ containing no letters of weight~2. Suppose that the normal form of every rigid conjugate of every power of~$x$ contains a diagonal letter ($\mdiag$ or $\adiag$). Suppose that $n$ and $N$ are two positive integers with $n|N$. If the conjugacy graph of~$x^n$ contains more than one vertex, then the conjugacy graph of $x^N$ is isomorphic to the conjugacy graph of~$x^n$.
\end{lemma}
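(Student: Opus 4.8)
The plan is to show that, writing $d=N/n$, the vertex map $\pi^d$ of Lemma~\ref{L:PowersAndConjGraphs}, which is already known to embed the conjugacy graph of~$x^n$ as a subgraph of the conjugacy graph of~$x^N$, is in fact a graph isomorphism under the standing hypotheses. Since $\pi^d$ is injective on vertices and carries every (gray) arrow of~$x^n$ to an arrow of~$x^N$, it suffices to establish two things: that $\pi^d$ is surjective on vertices, and that each image vertex $y^d$ emits no arrows in the conjugacy graph of~$x^N$ beyond those inherited from~$x^n$.

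First I would record the structural facts that make the earlier lemmas applicable. By Observation~\ref{O:NoWeight2}, both conjugacy graphs contain only gray arrows and all their vertices represent rigid braids without weight-$2$ letters; by the hypothesis of the lemma, every such vertex moreover contains a diagonal letter $\mdiag$ or $\adiag$. Next, because the conjugacy graph of~$x^n$ is connected, has more than one vertex, and (Remark~\ref{R:DiffDefnsConjGraph}) any two of its vertices are joined by a path which, in the absence of black arrows, is entirely gray, the graph is strongly connected by gray arrows; in particular every vertex~$y$ emits at least one gray arrow.

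The key step is then to apply Lemma~\ref{L:NoGrayArrows} to each vertex~$y$ of the conjugacy graph of~$x^n$. Each such $y$ contains a diagonal letter and emits a gray arrow, so the lemma guarantees that in the conjugacy graph of~$x^N$ the vertex $y^d=\pi^d(y)$ emits no gray arrows other than those inherited via~$\pi^d$. Since there are no black arrows at all, this means that every edge leaving an image vertex again lands in the image of~$\pi^d$; that is, the image subgraph is closed under outgoing edges, and the outgoing edges of each $y^d$ coincide (with multiplicities) with the images of the outgoing edges of~$y$.

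Finally I would close the argument by connectivity of the conjugacy graph of~$x^N$: it too is strongly connected by gray arrows, while the image of~$\pi^d$ is a nonempty vertex set with no edge leaving it. A gray path issuing from an image vertex therefore never escapes the image, so every vertex of~$x^N$ is reachable from, hence belongs to, the image. Thus $\pi^d$ is onto on vertices, hence bijective, and since it matches outgoing edges one-to-one it is an isomorphism of conjugacy graphs. I expect the only genuinely delicate point to be the verification that every vertex emits a gray arrow, which is exactly what lets Lemma~\ref{L:NoGrayArrows} be invoked at \emph{every} vertex; this rests on the strong gray-connectivity forced by the absence of black arrows, and everything else is bookkeeping layered on top of Lemma~\ref{L:NoGrayArrows}.
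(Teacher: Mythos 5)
Your proposal is correct and follows essentially the same route as the paper, which states this lemma as an immediate consequence of Observation~\ref{O:NoWeight2} and Lemma~\ref{L:NoGrayArrows}; you have simply made explicit the bookkeeping the authors leave implicit (every vertex emits a gray arrow because the graph has more than one vertex, is gray-connected, and has no black arrows; hence Lemma~\ref{L:NoGrayArrows} applies at every vertex, the image of $\pi^d$ is closed under outgoing edges, and connectivity of the graph of $x^N$ forces surjectivity). No gaps.
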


We are now ready to prove our Main Theorem~\ref{T:main}(b) for elements $x$ which are rigid, have no letters of weight~2, and for which every rigid conjugate of any power of~$x$ contains a diagonal letter. 

Let us fix some power~$n$, and look at the conjugacy graph of~$x^n$. There are two possibilities.

Either the conjugacy graph of~$x^n$ has only one vertex. Let $d$ be the smallest integer such that the conjugacy graph of~$x^{d\cdot n}$ has more than one vertex. (If there exists none, then the proof is complete.) Let $c_*$ be a conjugating element, representing a gray arrow in that conjugacy graph, such that $c_*^{-1} x^{d\cdot n} c_*$ is rigid. Then $r(c_*^{-1} x^n c_*)=d$. By Lemma~\ref{L:LimitOnPower}, applied to~$x^n$, we have $d=2$. 

The other possibility is that the conjugacy graph of~$x^n$ has more than one vertex. Then Lemma~\ref{L:NoFurtherPowers} implies that the conjugacy graph of any further power $x^{d\cdot n}$ is no larger than the conjugacy graph of~$x^n$.

To summarize the second case, for any element~$y$ conjugate to~$x$ which has some rigid power, we have $r(y)=1$ or $r(y)=2$. In the language of Section~\ref{S:Periodicity} again, all the primitive elements of $\cup_{n\in\mathbb N} SC(x^n)$ lie in $\cup_{n\leqslant 2} SC(x^n)$.

Summarizing the two cases studied above, we see that for any rigid element~$x$, all the primitive elements of $\cup_{n\in\mathbb N} SC(x^n)$ lie in $\cup_{n\leqslant 27} SC(x^n)$. Since, moreover, in the 4-strand braid group, as in all braid groups, roots are unique up to conjugacy \cite{GMRootUnique}, our group $\mathcal{B}_4^*$ satisfies Property~$(*)$ from Section~\ref{S:Periodicity} with $\calR_G=27$, and we conclude that for any rigid element~$x$, the sequence $|SC(x^n|$ is periodic of period at most $lcm(1,2,\ldots,27)$. 
This completes the proof of Main Theorem~\ref{T:main}(b).

{\bf Acknowledgements }
We thank Jean Michel for helpful comments on a first version of this paper.

The second author was supported by the projects PID2022-138719NA-I00 funded by MCIN/AEI/-10.13039/501100011033 and by FEDER, UE.

The third author was partially supported by the projects PID2020-117971GB-C21 and PID2024-157173NB-I00 funded by MCIN/AEI/10.13039/501100011033 and by FEDER, UE.

The fourth author conduced this work within the France 2030 framework program, the Centre Henri Lebesgue  ANR-11-LABX-0020-01

\end{document}